\newtheorem{thm}{Theorem}
\newtheorem{lem}{Lemma}[section]
\newtheorem{prop}[lem]{Proposition}
\newtheorem{cor}[lem]{Corollary}
\theoremstyle{definition}
\newtheorem{defin}[lem]{Definition}
\newtheorem{exa}[lem]{Example}
\numberwithin{equation}{section}
\newcommand{\IE}{\textit{i.e., }}
\newcommand{\CF}{\textit{cf.\ }}	
\newcommand{\IntegerRing}{{\mathbb Z}}
\newcommand{\RationalField}{{\mathbb Q}}
\newcommand{\RealField}{{\mathbb R}}
\newcommand{\tr}[1]{{\hspace{0.5mm}{}^t\hspace{-0.5mm}#1}}
\newcommand{\trace}[1]{{{\rm Tr}(#1)}}
\newcommand{\abs}[1]{ {\left\lvert #1 \right\rvert} }
\newcommand{\QA}[3]{{Q_{#1}(#2, #3)}}
\newcommand{\RA}[3]{{R_{#1}(#2, #3)}}
\newcommand{\Mod}[2]{{\lfloor #1 \rfloor_{#2}}}
\begin{document}


\baselineskip=17pt


\title{On positive-definite ternary quadratic forms with the same representations over $\IntegerRing$}

\author{Ryoko Oishi-Tomiyasu\\
Graduate School of Science and Engineering,\\ Yamagata University / JST PRESTO \\
990-8560 1-4-12, Kojirakawa-cho, Yamagata-shi, Yamagata, Japan\\
E-mail: tomiyasu@imi.kyushu-u.ac.jp\footnote{Current affiliation: Institute of Mathematics for Industry (IMI), Kyushu University}
}

\date{}

\maketitle


\renewcommand{\thefootnote}{}

\footnote{2010 \emph{Mathematics Subject Classification}: Primary 11E25; Secondary 11E20.}

\footnote{\emph{Key words and phrases}: 
quadratic forms, pair of quadratic forms,
representations, simultaneous representations, quartic rings.}

\renewcommand{\thefootnote}{\arabic{footnote}}
\setcounter{footnote}{0}


\begin{abstract}
Kaplansky conjectured that 
if two positive-definite ternary quadratic forms have perfectly identical representations over $\IntegerRing$,
they are equivalent over $\IntegerRing$ or constant multiples of regular forms, or 
is included in either of two families parameterized by $\RealField^2$. 
Our results aim to clarify the limitations imposed to such a pair by computational and theoretical approaches.
Firstly, the result of an exhaustive search for such pairs of integral quadratic forms 
is presented, in order to provide a concrete version of the Kaplansky conjecture.
The obtained list contains a small number of non-regular forms that were confirmed to have the identical representations up to 3,000,000 by computation.
However, a strong limitation on the existence of such pairs is still observed, regardless of whether the coefficient field is $\RationalField$ or $\RealField$.
Secondly, we prove that if two pairs of ternary quadratic forms have the identical simultaneous representations over $\RationalField$, their constant multiples are equivalent over $\RationalField$.
This was motivated by the question why the other families were not detected in the search.
In the proof, the parametrization of quartic rings and their resolvent rings by Bhargava is used to discuss pairs of ternary quadratic forms.
\end{abstract}

\section{Introduction}

The aim of this article is to investigate the pairs of ternary positive-definite quadratic forms $f$ and $g$ with perfectly identical representations over $\IntegerRing$. 
If such a pair also has the identical multiplicities (\IE theta series), 
it is known that $f$ and $g$ are equivalent over $\IntegerRing$ \cite{Schiemann97}. 
Therefore, ``identical representations'' means $q_\IntegerRing(f) = q_\IntegerRing(g)$ herein,
where
$q_\IntegerRing(f) := \{ f(x) : 0 \neq x \in \IntegerRing^3 \}$ is the set of \textit{representations over $\IntegerRing$}.
The same problem was also discussed in \cite{Hsia81}, and crystallography as mentioned in the following.
 
For any $N$-ary quadratic forms $f$ and $g$ with real coefficients,
we will use the notation $f \sim g$ when 
$f$ and $g$ are \textit{equivalent over $\IntegerRing$}, \IE $f(\mathbf{x} w) = g(\mathbf{x})$ for some $w \in GL_N(\IntegerRing)$.
With regard to the binary case, it was proved by a number of mathematicians
that if two positive-definite quadratic forms $f \not\sim g$ have the identical representations over $\IntegerRing$,
then such a pair is provided by $d (x_1^2 - x_1 x_2 + x_2^2)$ and $d (x_1^2 + 3 x_2^2)$ for some $d > 0$ \cite{Watson80}.
All the indefinite binary cases were also determined in \cite{Delang82}, \cite{Delang87}.

As an immediate consequence, 
it is not difficult to verify that $f$ and $g$ in each of the following families,  satisfy $q_\IntegerRing(f) = q_\IntegerRing(g)$, regardless of the values of $c, d$ (see Section~\ref{Case of Disc(A_i, B_i) = 0 (proofs of Propositions 1, 2)}):
\begin{enumerate}[(i)]
\item $\{ f, g \} = \{ c(x_1^2 - x_1 x_2 + x_2^2) + d x_3^2, c(x_1^2 + 3 x_2^2) + d x_3^2 \}$,

\item $\{ f, g \} = \{ c(x_1^2 - x_1 x_2 + x_2^2) + d (x_1 + x_2 + 3 x_3)^2, c(x_1^2 + 3 x_2^2) + d (x_1 + 3 x_3)^2 \}$.
\end{enumerate}
Kaplansky conjectured in his letter to Schiemann in 1997 that 
all the pairs of non-regular forms $f, g$ satisfy
$f \not\sim g$ and $q_\IntegerRing(f) = q_\IntegerRing(g)$,
as long as they belong to either of the above (i), (ii).
A quadratic form $f$ over $\RationalField$ is said to be \textit{regular}, if 
any $m \in \RationalField$ that is
represented by $f$ over $\IntegerRing_v$ for any primes $v$,
is also represented by $f$ over $\IntegerRing$,
where $v = \infty$ is also included, and $\IntegerRing_{\infty} = \RealField$.

It was proved in \cite{Do2012} that the conjecture holds if only diagonal quadratic forms are considered
(all such cases are provided by No.34 and No.50 in Tables~\ref{Forty-nine groups of ternary positive definite quadratic forms representing the same numbers(1/3)}--\ref{Forty-nine groups of ternary positive definite quadratic forms representing the same numbers(3/3)}). 

In order to obtain more detailed information about this problem, 
an exhaustive search for such $f, g$ with integral quadratic coefficients were carried out.
It can be proved that if $f, g$ over $\RealField$ satisfy $q_\IntegerRing(f) = q_\IntegerRing(g)$,
infinitely many $f_2, g_2$ over $\IntegerRing$ with $q_\IntegerRing(f_2) = q_\IntegerRing(g_2)$ are generated from these $f, g$ (Lemma \ref{lem:decomposition of positive definite symmety matrices}). Hence, the search also provides information about the case of real forms.
The result is presented in Tables~\ref{Forty-nine groups of ternary positive definite quadratic forms representing the same numbers(1/3)}--\ref{Forty-nine groups of ternary positive definite quadratic forms representing the same numbers(3/3)}
in Section~\ref{A table of quadratic forms with the same representations over Z}, 
which indicates that
the existence of such pairs is rather limited as conjectured by Kaplansky, although the current list includes some non-regular cases.

If the quadratic forms contained in the above (i), (ii) are excluded, 
our exhaustive search finds only 151 equivalence classes of quadratic forms 
that have perfectly identical representations over $\IntegerRing$ as another class.
Among the 151 classes, 36 are not regular.
In addition, the list includes a case that has been proved to be regular only under the generalized Riemann hypothesis \cite{Oliver2014}.

In what follows, $\{ f, g \} \sim \{ f_2, g_2 \}$ means that either of $f \sim f_2$, $g \sim g_2$ or $f \sim g_2$, $g \sim f_2$ holds.
The following is suggested from the computational result:
\begin{description}
\item[Kaplansky conjecture (modified version):]
if two ternary positive-definite quadratic forms $f \not\sim g$ satisfy $q_\IntegerRing(f) = q_\IntegerRing(g)$,
one of the following holds:
\begin{enumerate}[(i)]
\item $\{ f, g \} \sim \left\{ c ( x_1^2 - x_1 x_2 + x_2^2 ) + d x_3^2,  c ( x_1^2 + 3 x_2^2) + d x_3^2 \right\}$ for some
$c, d \in \RealField$,

\item $\{ f, g \} \sim \left\{ c ( x_1^2 - x_1 x_2 + x_2^2 ) + d (x_1 + x_2 + 3 x_3)^2,  c ( x_1^2 + 3 x_2^2) + d (x_1 + 3 x_3)^2 \right\}$ for some
$c, d \in \RealField$,

\item $\{ f, g \} \sim \{ c f_2, c g_2 \}$ for some $c \in \RealField$ and $f_2, g_2$ contained in either of the No.1--53 in
Tables~\ref{Forty-nine groups of ternary positive definite quadratic forms representing the same numbers(1/3)}--\ref{Forty-nine groups of ternary positive definite quadratic forms representing the same numbers(3/3)}.
\end{enumerate}
\end{description}

Although the non-regular cases newly found in our search are also included in the above, 
it should be noted that 
they were just confirmed to have the identical set of representations up to 3,000,000, by computation.
With regard to regular quadratic forms,
it is not difficult to confirm that all of their integral representations are perfectly identical. 

Before proceeding to our theoretical results motivated by the Kaplansky conjecture, 
first we provide the following proposition; for any field $k$ of characteristic $\ne 2$, 
the set of all the $n$-ary quadratic forms over $k$
is denoted by ${\rm Sym}^2 (k^n)^*$,
and the set of all the $s$-tuples of such forms 
is denoted by ${\rm Sym}^2 (k^n)^* \otimes_{k} k^s$.
For any subring $R \subset k$ and $f_1, \ldots, f_s \in {\rm Sym}^2 (k^n)^*$,
the elements of $q_{R}(f_1, \ldots, f_s) := \{ (f_1(v), \ldots, f_s(v)) : 0 \neq v \in R^n \}$
are called \textit{simultaneous representations of $f_1, \ldots, f_s$ over $R$}.

\begin{prop}\label{thm:proposition 1}
With regard to positive-definite $f \in {\rm Sym}^2 (\RealField^3)^*$ not  
contained in $\RealField^\times {\rm Sym}^2 (\RationalField^3)^*$,
the Kaplansky conjecture is true 
if and only if the following (*) is true:
\begin{description}
\item[(*)]
If both of $(A_i, B_i) \in {\rm Sym}^2 (\RationalField^3)^* \otimes_{\RationalField} \RationalField^2$ ($i = 1, 2$) satisfy
\begin{enumerate}[(a)]
\item $A_i$ and $B_i$ are linearly independent over $\RationalField$,
\item $c A_i + d B_i$ is positive-definite for some $c, d \in \RationalField$ (\IE d-pencil),
\item $q_\IntegerRing(A_1, B_1) = q_\IntegerRing(A_2, B_2)$, then 
\end{enumerate}
$(A_1, B_1) = (w, 1) \cdot (A_2, B_2)$ holds for some $w \in GL_3(\IntegerRing)$,
or otherwise, $\{ (A_1, B_1), (A_2, B_2) \}$ equals either of the following as a set, for some $(w_i, v) \in GL_3(\IntegerRing) \times GL_2(\RationalField)$ ($i = 1, 2$):
\begin{enumerate}[(i)]
\item $\left\{  
	(w_1, v) \cdot ( x_1^2 - x_1 x_2 + x_2^2, x_3^2 ),
	(w_2, v) \cdot ( x_1^2 + 3 x_2^2, x_3^2 )
\right\}$,

\item $\left\{  
	(w_1, v) \cdot ( x_1^2 - x_1 x_2 + x_2^2, (x_1 + x_2 + 3 x_3)^2 ),
	(w_2, v) \cdot ( x_1^2 + 3 x_2^2, (x_1 + 3 x_3)^2 )
\right\}$,

\end{enumerate}
where $GL_3(\RationalField) \times GL_2(\RationalField)$ acts on ${\rm Sym}^2 (\RationalField^3)^* \otimes_{\RationalField} \RationalField^2$ by
\begin{eqnarray*}
\left(w, 
\begin{pmatrix}
	r & s \\
	t & u
\end{pmatrix}
\right) \cdot (A, B) = (r A({\mathbf x} w) + s B({\mathbf x} w), t A({\mathbf x} w) + u B({\mathbf x} w)).
\end{eqnarray*}

\end{description}

\end{prop}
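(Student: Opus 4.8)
The plan is to build a dictionary between a single real form $f$ with $cf \notin \mathrm{Sym}^2(\RationalField^3)^*$ for all $c$ and a pair of rational ternary forms, read off from the $\RationalField$-span of the coefficients of $f$. For a real form $h$, let $L_h \subset \RealField$ be the $\RationalField$-vector space spanned by its coefficients; since $h(e_i)$ and $h(e_i+e_j)$ recover the coefficients from the values and conversely, $L_h$ equals the $\RationalField$-span of $q_\IntegerRing(h)$. Hence $q_\IntegerRing(f)=q_\IntegerRing(g)$ forces $L_f=L_g=:L$, and fixing a $\RationalField$-basis $\omega_1,\dots,\omega_k$ of $L$ (with $k=\dim_\RationalField L\ge 2$, since $f$ is not a real multiple of a rational form) I write $f=\sum_j\omega_j A_j$ and $g=\sum_j\omega_j A_j'$ with $A_1,\dots,A_k$ (resp.\ $A_1',\dots,A_k'$) rational forms that are linearly independent over $\RationalField$. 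As $\omega_1,\dots,\omega_k$ are $\RationalField$-independent, reading off coordinates shows $q_\IntegerRing(f)=q_\IntegerRing(g)$ is equivalent to $q_\IntegerRing(A_1,\dots,A_k)=q_\IntegerRing(A_1',\dots,A_k')$ in $\RationalField^k$. I will also use the elementary fact that if $q_\IntegerRing(g)=q_\IntegerRing(f)$ and $f$ is positive-definite then so is $g$: an indefinite or merely semidefinite real ternary form takes a non-positive value, or values with infimum $0$, on $\IntegerRing^3\setminus\{0\}$ (a solid cone always meets the lattice), contradicting that $q_\IntegerRing(g)$ has the positive minimum of $f$.

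For the direction ``Kaplansky $\Rightarrow$ (*)'', start from pairs $(A_1,B_1),(A_2,B_2)$ obeying (a)--(c). Choose $(c,d)\in\RealField^2$ linearly independent over $\RationalField$ and close enough to a rational definite point of the pencil that $f:=cA_1+dB_1$ is positive-definite; this is possible because the definite locus is open and nonempty by (b), and $\RationalField$-independent pairs are dense. Put $g:=cA_2+dB_2$. Since $(a,b)\mapsto ca+db$ is injective on $\RationalField^2$, (c) gives $q_\IntegerRing(f)=q_\IntegerRing(g)$, so $g$ is positive-definite by the fact above, and $f$ is irrational by (a). Applying the assumed Kaplansky conclusion and discarding case (iii) (excluded as $f$ is irrational), either $f\sim g$ or $\{f,g\}$ lies in family (i) or (ii). If $f\sim g$, comparing $cA_2+dB_2=cA_1(\mathbf{x}w)+dB_1(\mathbf{x}w)$ coefficientwise in $c,d$ yields $(A_1,B_1)=(w^{-1},1)\cdot(A_2,B_2)$, the first alternative of (*). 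Otherwise $f=c'(x_1^2-x_1x_2+x_2^2)+d'x_3^2$ up to $GL_3(\IntegerRing)$ with $c',d'$ linearly independent over $\RationalField$; as $\{c',d'\}$ and $\{c,d\}$ are both bases of $L$ there is $v\in GL_2(\RationalField)$ relating them, and equating the $c$- and $d$-components recovers exactly family (i) of (*), and similarly for (ii).

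For the converse ``(*) $\Rightarrow$ Kaplansky'', take an irrational positive-definite $f$ with a partner $g\not\sim f$, $q_\IntegerRing(f)=q_\IntegerRing(g)$, and use the decomposition above. When $k=2$ the tuples are pairs $(A_1,B_1)$ and, after absorbing the change of $\RationalField$-basis of $L$ into a $v\in GL_2(\RationalField)$, $(A_2',B_2')$ with $q_\IntegerRing(A_1,B_1)=q_\IntegerRing(A_2',B_2')$; these satisfy (a) (linear independence), (b) (the real definite point $(\omega_1,\omega_2)$ forces a rational one by openness), and (c), so (*) applies. Its first alternative would give $f\sim g$, contradicting $f\not\sim g$; hence $\{(A_1,B_1),(A_2',B_2')\}$ is family (i) or (ii), which reassembles into Kaplansky's family (i) or (ii) for $f,g$.

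The main obstacle is to rule out $k\ge 3$, where (*), a statement about \emph{pairs}, does not apply directly; the plan is to show that every partner then satisfies $f\sim g$, so Kaplansky holds vacuously. The key point is that applying any rational linear map $\RationalField^k\to\RationalField^2$ to the equal $k$-tuples preserves equal simultaneous representations: for rational $\mu,\nu$ the pair $B_\mu:=\sum_j\mu_j A_j$, $C_\nu:=\sum_j\nu_j A_j$ and its primed counterpart satisfy $q_\IntegerRing(B_\mu,C_\nu)=q_\IntegerRing(B_\mu',C_\nu')$. Fix rational $\mu$ near $\omega$ so that $B_\mu$ is positive-definite; then for every $\nu$ outside a proper Zariski-closed set (where $B_\mu,C_\nu$ fail independence, or where the pencil degenerates into a standard pencil of family (i) or (ii)), the pair $(B_\mu,C_\nu)$ meets (a)--(c) and (*) gives $B_\mu=B_\mu'(\mathbf{x}w)$ for some $w$ in the \emph{finite} set $S_\mu$ of $w\in GL_3(\IntegerRing)$ with $B_\mu=B_\mu'(\mathbf{x}w)$, together with the \emph{linear} condition $\sum_j\nu_j\bigl(A_j-A_j'(\mathbf{x}w)\bigr)=0$. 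Thus a Zariski-dense set of $\nu$ is covered by the finitely many subspaces $V_w=\{\nu:\sum_j\nu_j(A_j-A_j'(\mathbf{x}w))=0\}$, $w\in S_\mu$; since $\RationalField^k$ is not a finite union of proper subvarieties, some $V_w=\RationalField^k$, i.e.\ $A_j=A_j'(\mathbf{x}w)$ for all $j$ and hence $f\sim g$. Making this covering-and-finiteness step rigorous --- in particular verifying that the ``family locus'' of bad $\nu$ is genuinely a proper subvariety --- is the crux of the argument.
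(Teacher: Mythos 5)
Your overall strategy matches the paper's: both translate an irrational positive-definite real form into a tuple of rational forms via the $\RationalField$-span $L$ of its coefficients (the paper's Lemma on decomposing positive-definite matrices guarantees the rational constituents can even be taken positive-definite), both observe that $q_\IntegerRing(f)=q_\IntegerRing(g)$ becomes equality of simultaneous representations of the tuples, and both ultimately rest on the finiteness of the set of $w\in GL_3(\IntegerRing)$ carrying one positive-definite rational form to another. Where you genuinely differ is the reduction from $k$-tuples to pairs: the paper formulates an auxiliary statement (**) about $s$-tuples and proves it by induction on $s$, pinning down a single isometry $h$ in a finite group $H$ by evaluating at two rational parameters $c_1\neq c_2$; you instead project directly onto pairs $(B_\mu,C_\nu)$ for varying $\nu$ and cover a Zariski-dense set of $\nu$ by the finitely many linear subspaces $V_w$. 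These are the same pigeonhole in different clothing, but your version avoids the induction and the intermediate statement, which is a mild simplification.

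The step you flag as the crux --- that the set of $\nu$ for which $\{(B_\mu,C_\nu),(B_\mu',C_\nu')\}$ lands in family (i) or (ii) lies in a proper subvariety --- is the one place where your write-up is not yet a proof, and it does need an argument. It can be closed: membership in either family forces $\det(B_\mu x - C_\nu y)$ to have a multiple root, i.e.\ ${\rm Disc}(B_\mu,C_\nu)=0$, so it suffices to show this discriminant is not identically zero in $\nu$ when $k\geq 3$. After the congruence $C\mapsto B_\mu^{-1/2}CB_\mu^{-1/2}$ this reduces to the claim that a linear space $W$ of real symmetric $3\times 3$ matrices containing $I$ and consisting entirely of matrices with a repeated eigenvalue has $\dim W\leq 2$: normalize one non-scalar element of $W$ to $E={\rm diag}(1,1,0)$; requiring $sE+P$ to have a repeated eigenvalue for all $s$ forces the upper-left $2\times 2$ block of every $P\in W$ to be scalar, and then a direct eigenvalue computation forces the off-block entries of $P$ to vanish, so $W\subset\{{\rm diag}(\mu,\mu,c)\}$. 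The paper reaches the same end differently, by treating the family cases (II), (III) inside its induction, where ${\rm Disc}(A_{i1},A_{i2}+dA_{i3})=0$ for all $d$ forces $A_{i1},A_{i2},A_{i3}$ to be linearly dependent and hence contradicts the assumed dimension. With that one lemma supplied, your argument is complete and correct.
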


We shall say that a pair $(A, B) \in {\rm Sym}^2 (k^n)^* \otimes_{k} k^2$ is \textit{singular},
if $\det(A x - B y) = 0$ as a polynomial in $k[x, y]$, and \textit{non-singular} otherwise.
A non-singular $(A, B)$ is said to be \textit{anisotropic} over $k$ if $A(x) = B(x) = 0$ does not hold for any $0 \neq x \in k^n$.
It should be noted that any pair $(A, B)  \in {\rm Sym}^2 (k^n)^* \otimes_{k} k^2$ with $n \geq 3$
is a d-pencil if and only if $(A, B)$ is non-singular and anisotropic over $\RealField$ \cite{Finsler36} (\CF \cite{Uhlig79}).

It can be proved without difficulty that 
(*) holds true in the following case: 


\begin{prop}\label{thm:theorem 1}
If $\det(A_i x - B_i y) = 0$ has a multiple root (for at least one of $i = 1, 2$), 
the above (*) holds true. 
\qed
\end{prop}

Motivated by the Kaplansky conjecture and Proposition \ref{thm:proposition 1}, the following is proved in this article.

\begin{thm}\label{thm:main result over RationalField}
We assume that $(A_1, B_1), (A_2, B_2) \in {\rm Sym}^2 (\RationalField^3)^* \otimes_{\RationalField} \RationalField^2$ satisfy
\begin{enumerate}[(a)]
\item \label{item: assumption (a)} $A_i$ and $B_i$ are linearly independent over  $\RationalField$.
\item[(b')] \label{item: assumption (b)} $(A_i, B_i)$ is non-singular and anisotropic over $\RationalField$.
\item[(c')] \label{item: assumption (c)} $q_\RationalField(A_1, B_1) = q_\RationalField(A_2, B_2)$.
\end{enumerate}
In this case, 
$(r_1 A_1, r_1 B_1)$ is equivalent to $(r_2 A_2, r_2 B_2)$ by the action of $GL_3(\RationalField) \times \{ 1 \}$
for any integers $r_1, r_2$ that satisfy $r_1^{-1} \det(A_1 x - B_1) = r_2^{-1} \det(A_2 x - B_2)$.
\qed
\end{thm}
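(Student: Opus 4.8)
The plan is to recover the $GL_3(\RationalField)\times\{1\}$-orbit of a pair from its simultaneous representations by passing first to the individual members of the pencil, then to local equivalence data at every place, and finally gluing these via Bhargava's parametrization. The first step is purely formal: since
$q_{\RationalField}(\mu A + \nu B) = \{\, \mu a + \nu b : (a,b)\in q_{\RationalField}(A,B)\,\}$
for every $(\mu,\nu)\in\RationalField^2$, assumption (c') propagates to the whole pencil, giving $q_{\RationalField}(\mu A_1 + \nu B_1) = q_{\RationalField}(\mu A_2 + \nu B_2)$ for all $(\mu,\nu)$. Thus the single joint identity is reduced to a family of single-form representation identities indexed by the pencil, which are far more tractable.

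Next I would convert these rational identities into local equivalences. For a nondegenerate ternary form the set of represented rationals is the intersection over all places $v$ of the local value sets, each of which is a union of square classes of $\RationalField_v^\times$; one recovers $q_{\RationalField_v}(f)$ from $q_{\RationalField}(f)$ by density, and the missed square class together with the isotropic/anisotropic dichotomy reads off the local invariants (discriminant square class and Hasse symbol), hence the $\RationalField_v$-equivalence class of $f$. This yields $\mu A_1 + \nu B_1 \sim \mu A_2 + \nu B_2$ over every $\RationalField_v$, for each member. The hypothesis $r_1^{-1}\det(A_1 x - B_1) = r_2^{-1}\det(A_2 x - B_2)$ is what rigidifies the remaining freedom: it equates the two binary cubic resolvents with their roots \emph{in place}, so the three degenerate members are matched with no permutation, pinning the $GL_2$-direction to the identity and reducing the task to producing a single $w\in GL_3(\RationalField)$.

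Here Bhargava's parametrization enters. A non-singular pair anisotropic over $\RationalField$ is a pencil of conics in $\mathbb{P}^2$ whose four base points are distinct and carry no rational point; they constitute an étale quartic $\RationalField$-algebra $L$, and $4\det(Ax-By)$ is its cubic resolvent, so the common resolvent identifies the resolvent rings $R_1\cong R_2$. I would then argue that the member-wise local equivalences of the previous step, combined with the fixed resolvent, upgrade to $(A_1,B_1)\sim_{GL_3(\RationalField_v)\times\{1\}}(A_2,B_2)$ for every $v$, \IE to $L_1\cong L_2$ over each $\RationalField_v$ compatibly with the resolvent. The final step is a local--global principle: two non-singular pairs over $\RationalField$ with a common cubic resolvent that are $GL_3\times\{1\}$-equivalent at every place are $GL_3(\RationalField)\times\{1\}$-equivalent; through the correspondence this becomes the assertion that quartic algebras with a fixed resolvent which are everywhere locally isomorphic are globally isomorphic.

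I expect this last, local--global, step to be the main obstacle, since Hasse principles routinely fail for torsors under disconnected or finite group schemes. The real content is to show that the pairs with a given resolvent are rigid enough --- a single orbit, or a torsor under an automorphism scheme whose local--global obstruction vanishes --- and it is precisely Bhargava's explicit description of the orbits and of their stabilizers in terms of the resolvent that I would use to control this. A secondary technical point is the clean passage from the global value set $q_{\RationalField}(f)$ to the local invariants, and the verification that member-wise local equivalence together with the rigid resolvent genuinely forces pair-wise local equivalence, rather than merely the separate equivalence of each conic in the pencil.
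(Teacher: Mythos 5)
Your outline correctly identifies the two structural pillars of the argument (reduction to a common cubic resolvent, then Bhargava's parametrization to compare the quartic algebras), but three essential steps are either wrong as stated or left as acknowledged gaps. First, the claim that equal value sets of each pencil member yield $\mu A_1+\nu B_1\sim\mu A_2+\nu B_2$ over every $\RationalField_v$ fails: a nondegenerate ternary form that is isotropic over $\RationalField_v$ represents all of $\RationalField_v^\times$, so its local value set carries no information about its determinant class or Hasse invariant; moreover the members are only expected to become equivalent after the rescaling by $r_1,r_2$ (in the paper's Proposition \ref{prop:case of monogenic}, case 1, one has $r_i=u_i^2$ with $u_1/u_2\neq 1$). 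The paper extracts only the weaker local statements ($C$1)--($C$3) and Corollary \ref{cor: anisotropic iff anisotropic} (transfer of isotropy at primes where the resolvent splits completely), and this is all that is available. Second, the local--global statement you defer --- quartic algebras with a fixed resolvent that agree locally are isomorphic --- is not a formal consequence of the parametrization and is exactly where the paper's Lemma \ref{lem:isomorphic k-algebras} does real work: it needs local agreement only at primes splitting completely in the resolvent, and the proof goes through Kronecker/Chebotarev density together with the fact that all index-$4$ subgroups of $S_4$ (resp.\ $A_4$) are conjugate, plus the elimination of the split case $k(\sqrt{d_1})\oplus k(\sqrt{d_2})$ versus $k(\sqrt{d_1},\sqrt{d_2})$. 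To even apply such a lemma you must first manufacture compatible generators $\alpha_i$ of the two quartic algebras; the paper does this by choosing a \emph{common simultaneous representation} $(q_A,q_B)\in q_\RationalField(A_1,B_1)=q_\RationalField(A_2,B_2)$ with $\det(q_BA_i-q_AB_i)\neq0$ (Lemma \ref{lem:generator of Q otimes RationalField}) and then uses Corollary \ref{cor: (A, B) is isotropic over k} to translate local isotropy into local roots of ${\rm ch}_{\alpha_i}$. Your member-wise route does not produce this compatibility.

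Third, the assertion that the determinant hypothesis ``pins the $GL_2$-direction to the identity'' is not correct: an isomorphism of the quartic algebras respecting the resolvent yields a pair $(W,V)\in GL_3(\RationalField)\times GL_2(\RationalField)$ carrying $(A_1,B_1)$ to $(A_2,B_2)$ in which $V$ stabilizes the binary cubic $f_{det}$ but need not be scalar, since the cubic can have a nontrivial rational automorphism (e.g.\ when its splitting field is quadratic or cyclic cubic). Removing this residual $V$ is the content of the paper's Lemma \ref{lem:automorphism of (A, B) with V}: from $q_k(A,B)=q_k((I,V)\cdot(A,B))$ and a case analysis $V^2=u^2I$ versus $V^3=u^3I$ one constructs a compensating $W_2\in GL_3(\RationalField)$ with $(W_2,-u^{-1}V)\cdot(A,B)=(A,B)$, and only then can $(W,V)$ be corrected to lie in $GL_3(\RationalField)\times\{1\}$ (up to the scalar $r_1/r_2$). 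Without an argument for these three points the proposal does not yet constitute a proof.
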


Since (b') is obtained by replacing $\RealField$ in (b) with $\RationalField$, 
Theorem \ref{thm:main result over RationalField} handles a more general case than Proposition \ref{thm:proposition 1}.
In the proof, one-to-one correspondence between the set of pairs of quadratic forms and the set of quartic rings and its resolvent cubic rings\cite{Bhargava2004}, is used.


%

We shall explain the outline of the proof of Theorem \ref{thm:main result over RationalField};
as proved in Proposition \ref{prop:same det(Ax+By)},
the situation of Theorem \ref{thm:main result over RationalField} 
leads to $\det(A_1 x - B_1 y) = c \det(A_2 x - B_2 y)$ for some $c \in \RationalField^\times$.
This and $q_\RationalField(A_1, B_1) = q_\RationalField(A_2, B_2)$
imply that 
both $(A_i, B_i)$ can be transformed to another pairs $(\tilde{A}_i, \tilde{B}_i)$ ($i = 1, 2$)
that correspond to quartic $\RationalField$-algebras $\RationalField[x]/(f_i(x))$, 
by the action of some $(W_i, V) \in GL_3(\RationalField) \times GL_2(\RationalField)$
(Lemma \ref{prop: monogenic ring}).
Proposition \ref{prop:same det(Ax+By)} also implies that 
if the resolvent cubic polynomial of $f_i(x)$ is denoted by $f_i^{res}(x)$,
$\RationalField[x]/(f_i^{res}(x))$ ($i = 1, 2$) are isomorphic as $k$-algebras.
Since the above $V$ is common, $(\tilde{A}_i, \tilde{B}_i)$ also satisfy (a), (b') and (c').
In particular, (b') implies that $f_i(x) = 0$ ($i = 1, 2)$ have a root in $\RationalField_p$ 
and $\RationalField[x]/(f_i^{res}(x))$ completely splits over $\RationalField_p$
with regard to the same set of primes $p$ (Corollaries \ref{cor: anisotropic iff anisotropic}, \ref{cor: (A, B) is isotropic over k}).
Thus, $\RationalField[x]/(f_i(x))$ ($i = 1, 2$) are isomorphic as $k$-algebras (Lemma \ref{lem:isomorphic k-algebras}).
Theorem \ref{thm:main result over RationalField} and some relation formula between $f_1(x)$ and $f_2(x)$ (Proposition \ref{prop:case of monogenic})
are obtained as a result.

In crystallography, in order to determine the crystal lattice (\IE the equivalence class over $\IntegerRing$ of a positive-definite ternary quadratic form $f$ with real coefficients) from information about $q_\IntegerRing(f)$ that is extracted from the experimental data,
it has been recognized that 
some $f \not\sim g$ have the perfectly identical representations over $\IntegerRing$ (\cite{Mighell75}, \CF \cite{Tomiyasu2016}).
A three-dimensional lattice is \textit{hexagonal} if and only if 
it has a basis $v_1, v_2, v_3$ satisfying 
\begin{eqnarray*} 
	( v_i \cdot v_j )_{1 \leq i, j \leq 3} = 
	\begin{pmatrix}
		c    & -c /2 & 0 \\
	  -c/2 &  c      & 0 \\
	       0    &     0     & d \\
	\end{pmatrix},
\end{eqnarray*} 
for some $c, d \in \RealField$. A three-dimensional lattice is \textit{rhombohedral} if and only if 
it has a basis satisfying 
\begin{eqnarray*} 
	( v_i \cdot v_j )_{1 \leq i, j \leq 3} &=& 
	\begin{pmatrix}
		c+d    & -c/2+d & -c/2+d \\
	  -c/2+d &  c+d     & -c/2+d \\
	  -c/2+d & -c/2+d & c+d
	\end{pmatrix} \nonumber \\
	&=&
	\begin{pmatrix}
		1 &  0 & 0 \\
	    0 &  1 & 0 \\
	   -1 &-1&  1
	\end{pmatrix}
	\begin{pmatrix}
		c+d    & -c/2+d & 3 d \\
	  -c/2+d &  c+d     & 3 d \\
	    3 d & 3 d & 9 d
	\end{pmatrix}
	\begin{pmatrix}
		1 &  0 & -1 \\
	    0 &  1 & -1 \\
	    0 &  0 &   1
	\end{pmatrix},
\end{eqnarray*} 
for some $c, d \in \RealField$. Hence, (i) $c (x_1^2 - x_1 x_2 + x_2^2) + d x_3^2$
and (ii) $c (x_1^2 - x_1 x_2 + x_2^2) + d (x_1 + x_2 + 3 x_3 )^2$  ($c, d \in \RealField$)
parameterize all hexagonal and rhombohedral lattices, respectively.

\section{Notation and symbols}
\label{Notation and symbols}
Throughout this paper, 
a quadratic form $\sum_{1 \leq i \leq j \leq n} s_{ij} x_i x_j$ is always identified with the symmetric matrix
$(s_{ij})$ with $s_{ii}$ in the $(i, i)$-entry and $s_{ij}/2$ in the $(i, j)$-entry.
For any quadratic forms $f(x_1, \ldots, x_m)$, $g(x_1, \ldots, x_n)$,
their \textit{direct sum} is the $(n+m)$-ary quadratic form $f(x_1, \ldots, x_m) + g(x_{m+1}, \ldots, x_{m+n})$,
and denoted by $f \perp g$.
A quadratic form $\sum_{i=1}^n c_i x_i^2$ is represented as a diagonal matrix or $[c_1, \ldots, c_n]$.
In particular, $[c]$ means the unary quadratic form $c x^2$.

In Tables~\ref{Forty-nine groups of ternary positive definite quadratic forms representing the same numbers(1/3)}--\ref{Forty-nine groups of ternary positive definite quadratic forms representing the same numbers(3/3)} presenting the search result, 
the ternary quadratic forms are \textit{reduced}, in the sense that they are Minkowski-reduced Eq.(\ref{eq:condition3})
and satisfy the boundary conditions $C$1--5 provided by Eisenstein~\cite{Eisenstein1851}:
\begin{enumerate}[($C$1)]
	\item $s_{12}, s_{13}, s_{23} > 0$ or $s_{12}, s_{13}, s_{23} \leq 0$,
	\item $s_{11} = s_{22} \Longrightarrow \abs{s_{23}} \leq \abs{s_{13}}$,
	\item $s_{22} = s_{33} \Longrightarrow \abs{s_{13}} \leq \abs{s_{12}}$.
	\item case of positive Eisenstein forms (\IE $s_{12}, s_{13}, s_{23}> 0$): 
		for any distinct $1 \leq i, j, k \leq 3$,
		$s_{ii} = 2 \abs{s_{ij}} \Rightarrow \abs{s_{i k} } \leq 2 \abs{ s_{j k} }$.
	\item case of non-positive Eisenstein forms (\IE $s_{12}, s_{13}, s_{23} \leq 0$):
		for any distinct $1 \leq i, j, k \leq 3$, $s_{ii} = 2 \abs{s_{ij}} \Rightarrow s_{i k} = 0$.
		In addition, 
\begin{eqnarray*}
	s_{11} + s_{22} = 2 \abs{ s_{12} + s_{13} + s_{23} } \Rightarrow s_{11} \leq \abs{ s_{12} + 2 s_{13} }. \label{eq:condition2}
\end{eqnarray*}
\end{enumerate}

With regard to the notation for pairs of quadratic forms and quartic rings, 
those of \cite{O'Dorney2016}, in addition to \cite{Bhargava2004} are adopted herein.  
For any unitary commutative ring $R$, 
$(R^n)^* := {\rm Hom}(R^n, R)$ has the structure of a finitely-generated free $R$-module. 
For any $R$-module, 
${\rm Sym}^i M$ is the $i$-th symmetric power of $M$, 
and $\Lambda^i M$ is the $i$-th exterior power of $M$.
If ${\rm char}\ R \ne 2$,
${\rm Sym}^2(R^n)^*$ and ${\rm Sym}^2(R^n)^* \otimes_R R^s$
can be naturally identified with the set of all the $n$-ary quadratic forms over $R$,
and the set of all the $s$-tuples of $n$-ary quadratic forms over $R$, respectively.

It is said that $f, g \in {\rm Sym}^2 (R^n)^*$ are \textit{equivalent over $R$}, and denoted by $f \sim_R g$,
if there exists $w \in GL_n(R)$ such that $f(\mathbf{x} w) = g(\mathbf{x})$.
For any subring $R_2 \subset R$ and $f \in {\rm Sym}^2 (R^n)^*$,
the elements of $q_{R_2}(f) := \{ f(v) : 0 \neq v \in R_2^n \}$
are called the \textit{representations of $f$ over $R_2$}.

Throughout this paper, $k$ is a global field with ${\rm char}\ k \ne 2$, 
although some statements hold true for any fields.
The algebraic closure of $k$ is always denoted by $\bar{k}$.
As in the theory of prehomogeneous vector spaces,
${\rm Sym}^2 (k^3)^* \otimes_{k} k^2$ and $GL_3(k) \times GL_2(k)$ 
are denoted by $V_k$ and $G_k$, respectively.

The discriminant ${\rm Disc}(P(x, y))$ of a polynomial $P(x, y) = \prod_{j=1}^n (\alpha_j x - \beta_j y)$ is defined by ${\rm Disc}(P(x, y)) = \prod_{1 \le j < k \le n} (\alpha_j \beta_k - \alpha_k \beta_j)^2$.
The resolvent of a quartic polynomial $f(x) := x^4 + a_{3} x^3 + a_{2} x^2 + a_{1} x + a_{0}$ is the cubic polynomial
$f^{res}(x) :=  x^3 - a_{2} x^2 + (a_{1} a_{3}  - 4 a_{0}) x  + (4 a_{0} a_{2} - a_{1}^2 - a_{0} a_{3}^2)$.

For any $(A, B) \in V_k$,
${\rm Disc}(4 \det (A x - B y))$ is denoted by ${\rm Disc}(A, B)$.
$(\QA{k}{A}{B}, \langle 1, \xi_{1}, \xi_{2}, \xi_{3} \rangle)$ 
and $(\RA{k}{A}{B}, \langle 1, \omega_{1}, \omega_{2} \rangle)$ denote the quartic $k$-algebra
and its resolvent cubic algebra with their bases, assigned to $(A, B)$ by Eq.(\ref{eq: multiplicative structure of Q}), (\ref{eq: normalization}) and (\ref{eq: multiplicative structure of R}).

\section{A table of quadratic forms with the same representations over $\IntegerRing$}
\label{A table of quadratic forms with the same representations over Z}

The algorithm used to exhaustively search for
sets of positive-definite ternary quadratic forms with the identical representations is presented in Table~\ref{Recursive procedure} of Section~\ref{Algorithm}, 
with all the discussions to prove that the algorithm actually works.

The following $P$1--3 describe the searched region; 
for each quadratic form $\sum_{1 \leq i \leq j \leq 3} s_{ij} x_i x_j$ that satisfies the following conditions,
all the representations less than a given threshold are computed as a sorted set $\Lambda = \langle q_1, \ldots, q_t \rangle$,
and passed to the algorithm as an input. 
During the execution of the algorithm, the computer program always checks if the threshold is large enough 
to output all the forms with the required properties.
\begin{enumerate}[($P$1)]
 	\item all of $s_{ij}$ are integral and their greatest common divisor is 1.
 	\item The form is reduced, \IE satisfies Eq.(\ref{eq:condition3}) and $C$1-- 5 in Section \ref{Notation and symbols}.
      \item $s_{33} \leq 115$.

\end{enumerate}
All quadratic forms over $\RationalField$ (or their scalar multiples)
can be contained in the searched region by increasing the number 115 in (P3).
The algorithm 
was also applied to all (possibly) regular quadratic forms in the tables of \cite{Jagy97},
in order to check that all the pairs of regular forms are contained in the output.

The results are presented in Tables~\ref{Forty-nine groups of ternary positive definite quadratic forms representing the same numbers(1/3)}--\ref{Forty-nine groups of ternary positive
definite quadratic forms representing the same numbers(3/3)}.
By using a computer,
the quadratic forms in each set have been confirmed to have the identical representations over $\IntegerRing$ up to 3,000,000. 
The set contained in either of the hexagonal and rhombohedral families were removed from the output.

\begin{table}[htbp]
\caption{Ternary positive-definite quadratic forms with the same representations over $\IntegerRing$}
\label{Forty-nine groups of ternary positive definite quadratic forms representing the same numbers(1/3)}
\begin{minipage}{\textwidth}
\begin{scriptsize}
\begin{tabular}{p{1mm}p{19mm}p{4mm}p{4mm}p{4mm} p{4mm}p{4mm}p{4mm}
		lr}
 \hline
   No.
	& Determinant (Ratio) & $s_{11}$ & $s_{22}$ & $s_{33}$ & $s_{12}$ & $s_{13}$ & $s_{23}$ 
	& Integers not represented by the genus
	& Bravais Type\\
 \hline
1 &
$^{**}2^9 3^3$ (8) & 11 & 32 & 44 & -8 & -4 & -16
 & $4 n + 1$, $4 n + 2$, $8 n + 7$, $2^2 (4 n + 1)$,
$2^2 (4 n + 2)$,
 & Triclinic \\
   &
$^{**}2^6 3^3 11$ (11) & 11 & 32 & 59 & 8 & 10 & 8 
 & $3n + 1$, $3^{2k+1}(3n + 1)$
 & Triclinic \\
\hline
2 &
$^{**}2^9 3^2$ (8) & 5 & 20 & 48 & -4 & 0 & 0 
 & $4 n + 2$, $4 n + 3$, $8 n + 1$, $2^2 (4 n + 2)$, $2^2 (4 n + 3)$,
 & Monoclinic(P) \\
   &
$^{*}2^6 3^2 11$ (11) & 5 & 20 & 68 & -4 & -4 & -8 
 & $3^{2k}(3n + 1)$
 & Triclinic \\
\hline
3 &
$^{**}2^9 3^2$ (8) & 17 & 17 & 20 & -14 & -4 & -4
 & $4 n + 2$, $4 n + 3$, $8 n + 5$, $2^2 (4 n + 2)$, $2^2 (4 n + 3)$, 
 & Monoclinic(C) \\
   &
$^{**}2^6 3^2 11$ (11) & 17 & 20 & 20 & -4 & -4 & -8
 & $3^{2k}(3n + 1)$
 & Monoclinic(C) \\
\hline
4 &
$^{**}2^9 3$ (8) & 7 & 15 & 16 & -6 & 0 & 0
 & $4 n + 1$, $4 n + 2$, $8 n + 3$, $2^2 (4 n + 1)$, $2^2 (4 n + 2)$, 
 & Monoclinic(P) \\
   &
$^{*}2^6 3 \cdot 11$ (11) & 7 & 15 & 23 & -6 & -2 & -6
 & $3^{2k + 1}(3n + 1)$
 & Triclinic \\
\hline
5 &
$^{**}2^9 3$ (8) & 11 & 11 & 16 & 6 & 8 & 8
 & $4 n + 1$, $4 n + 2$, $8 n + 7$, $2^2 (4 n + 1)$, $2^2 (4 n + 2)$, 
 & Monoclinic(C) \\
   &
$^{**}2^6 3 \cdot 11$ (11) & 11 & 11 & 19 & 6 & 2 & 2
 & $3^{2k+1}(3n + 1)$
 & Monoclinic(C) \\
\hline
6 &
$^{**}2^5 3^3$ (8) & 8 & 11 & 11 & -4 & -4 & -2
 & \multirow{2}{*}{ $4 n + 1$, $4 n + 2$, $3n + 1$, $3^{2k+1}(3n + 1)$ }
 & Monoclinic(C) \\
   &
$^{**}2^2 3^3 \cdot 11$ (11) & 8 & 11 & 15 & -4 & 0 & -6
 & 
 & Triclinic \\
\hline
7 &
$^{**}2^5 3^2$ (8) & 5 & 5 & 12 & -2 & 0 & 0
 & \multirow{2}{*}{ $4 n + 2$, $4 n + 3$, $3^{2k}(3n + 1)$ }
 & Orthorhombic(C) \\
   &
$^{**}2^2 3^2 11$ (11) & 5 & 5 & 17 & -2 & -2 & -2
 & 
 & Monoclinic(C) \\
\hline
8 &
$^{**}2^5 3$ (8) & 4 & 4 & 7 & 0 & -4 & 0
 & \multirow{2}{*}{ $4 n + 1$, $4 n + 2$, $3^{2k+1}(3n + 1)$ }
 & Orthorhombic(C) \\
   &
$^{**}2^2 3\ 11$ (11) & 4 & 7 & 7 & -4 & 0 & -6
 & 
 & Monoclinic(C) \\
\hline
9 &
$^{**}2^2 3^3 5$ (5) & 5 & 8 & 17 & 4 & 2 & 8
 & \multirow{2}{*}{ $4 n + 2$, $4 n + 3$, $3n + 1$, $3^{2k+1}(3n + 1)$ }
 & Monoclinic(C) \\
   &
$^{**}2^5 3^3$ (8) & 5 & 8 & 24 & -4 & 0 & 0
 & 
 & Monoclinic(P) \\
\hline
10 &
$^{**}2^2 3^2 5$ (5) & 3 & 8 & 8 & 0 & 0 & -4
 & \multirow{2}{*}{ $4 n + 1$, $4 n + 2$, $3^{2n} (3 n + 1)$ }
 & Orthorhombic(C) \\
   &
$^{**}2^5 3^2$ (8) & 3 & 8 & 12 & 0 & 0 & 0
 & 
 & Orthorhombic(P) \\
\hline
11 &
$^{**}2^2 3\ 5$ (5) & 1 & 4 & 16 & 0 & 0 & -4
 & \multirow{2}{*}{ $4 n + 3$, $4 n + 2$, $3^{2k + 1}(3n + 1)$ }
 & Orthorhombic(C) \\
   &
$^{**}2^5 3$ (8) & 1 & 4 & 24 & 0 & 0 & 0
 & 
 & Orthorhombic(P) \\
\hline
12 &
$2^{-2} 3^3 5^2$ (25) & 5 & 5 & 8 & -2 & -4 & -1
 & \multirow{2}{*}{ $3 n + 1$, $3^{2k+1}(3 n - 1)$ }
 & Triclinic
%
%
%
\\
   &
$2^{-2} 3^3 37$ (37) & 5 & 8 & 8 & -4 & -1 & -5
 & 
 & Triclinic
%
%
%
%
\\
\hline
13 &
$2^{-2} 3^2 5^2$ (25) & 3 & 4 & 7 & 3 & 3 & 4
 & \multirow{2}{*}{ $3^{2k}(3 n - 1)$ }
 & Monoclinic(C)
%
%
\\
   &
$2^{-2} 3^2 37$ (37) & 3 & 4 & 7 & 0 & 0 & -1
 & 
 & Monoclinic(P)
%
%
%
%
\\
\hline
14 &
$2^{-2} 3\ 5^2$ (25) & 1 & 4 & 5 & 0 & -1 & -1
 & \multirow{2}{*}{ $3^{2k+1}(3 n - 1)$ }
 & Monoclinic(C)
%
%
\\
   &
$2^{-2} 3\ 37$ (37) & 1 & 4 & 7 & 0 & 0 & -1
 & 
 & Monoclinic(P)
%
%
%
%
 \\
\hline
15 &
$2 \cdot 3^4$ (1) & 4 & 7 & 7 & 2 & 2 & 5
 & \multirow{2}{*}{ $4 n + 2$, $3 n - 1$, $3 (3 n \pm 1)$, $2^{2k+1}(8n + 7)$ }
 & Monoclinic(C)
%
 \\
   &
$2^3 3^4$ (4) & 4 & 7 & 25 & -2 & -2 & -4
 & 
 & Triclinic
%
%
\\
\hline
16 &
$2^{-1} 3^5$ (1) & 2 & 2 & 41 & 2 & 2 & 1
 & \multirow{2}{*}{ $3 n + 1$, $3^2(3 n + 1)$ $3^{2k+1}(3n + 1)$ }
 & Orthorhombic(F)
%
%
%
\\
   &
$2 \cdot 3^5$ (4) & 2 & 6 & 41 & 0 & -1 & -3
 &
 & Triclinic
\\
\hline
17 &
$2^{-1} 3^4$ (1) & 2 & 2 & 14 & 2 & 2 & 1
 & \multirow{2}{*}{ $3 (3 n + 1)$, $3^{2k}(3n + 1)$ }
 & Orthorhombic(F)
%
\\
   &
$2 \cdot 3^4$ (4) & 2 & 6 & 14 & 0 & -1 & -3
 & 
 & Triclinic
%
%
 \\
\hline
18 &
$2^{-1} 3^3$ (1) & 2 & 2 & 4 & 0 & -2 & -1
 & \multirow{2}{*}{ $3 (3 n - 1)$, $3^{2k+1}(3n + 1)$ }
 & Monoclinic(C)
%
%
\\
   &
$2 \cdot 3^3$ (4) & 2 & 4 & 8 & 1 & 1 & 4
 & 
 & Triclinic
%
%
%
%
%
\\
\hline
19 &
$2 \cdot 3^4$ (1) & 5 & 7 & 7 & 5 & 1 & 6
 & \multirow{2}{*}{ $4 n + 2$, $3 (3 n \pm 1)$, $3^3 (3 n \pm 1)$, $2^{2k+1}(8n + 7)$ }
 & Triclinic
%
%
%
%
%
\\
   &
$2 \cdot 3^4$ (1) & 5 & 5 & 8 & -3 & -4 & 0
 &
 & Triclinic
%
%
%
%
%
\\
\hline
20 &
$2 \cdot 29$ (1) & 3 & 5 & 5 & 3 & 1 & 3
 & \multirow{2}{*}{ $4 n + 2$, $2^{2k+1}(8n + 3)$ }
 & Triclinic
%
%
\\
   &
$2 \cdot 29$ (1) & 3 & 3 & 7 & 1 & 2 & 1
 &
 & Triclinic
%
%
\\
\hline
21 &
$^{*}3^3$ (1) & 1 & 4 & 7 & 0 & -1 & 0
 & \multirow{2}{*}{ $4 n + 2$, $3 (3 n + 1)$, $3^{2k+1}(3 n - 1)$ }
 & Orthorhombic(C)
\\
   &
$^{*}3^3$ (1) & 1 & 5 & 7 & 1 & 1 & 5
 &
 & Monoclinic(C)
\\
\hline
22 &
$^{*}2^{-2} 3^3$ (1) & 1 & 1 & 7 & 0 & -1 & 0
 & \multirow{2}{*}{ $3 (3 n + 1)$, $3^{2k+1}(3 n - 1)$ }
 & Orthorhombic(C) \\
   &
$^{*}2^{-2} 3^3$ (1) & 1 & 2 & 4 & -1 & 0 & -1
 &
 & Monoclinic(C) \\
\hline
23 &
$2^{-2} 3^2 59$ (59) & 5 & 5 & 6 & -2 & -3 & 0
 & \multirow{2}{*}{ $3^{2k}(3n + 1)$ }
 & Triclinic
%
%
\\
   &
$2^{-2} 3^2 71$ (71) & 5 & 5 & 8 & -4 & -2 & -1
 &
 & Triclinic
%
%
%
\\
\hline
24 &
$2^{-2} 3 \cdot 59$ (59) & 2 & 4 & 7 & 1 & 2 & 4 
 & \multirow{2}{*}{ $3^{2k+1}(3n + 1)$ }
 & Triclinic
%
%
\\
   &
$2^{-2} 3 \cdot 71$ (71) & 2 & 4 & 7 & -1 & -1 & 0
 &
 & Triclinic
%
%
%
 \\
\hline
25 &
$3^4$ (4) & 4 & 4 & 6 & -2 & -3 & 0
 & \multirow{2}{*}{ $3(3 n + 1)$, $3^{2k}(3 n - 1)$ }
 & Triclinic
%
\\
   &
$2^{-2} 3^4 7$ (7) & 4 & 6 & 7 & 3 & 2 & 3
 & 
 & Triclinic
%
%
\end{tabular}
\end{scriptsize}
\footnotetext[1]{
The mark $**$ indicates that the form provides the only class in its genus. Therefore, it is regular.
}
\footnotetext[2]{
The mark $*$ indicates that the form is regular, although its genus consists of more than one class.
}
\footnotetext[3]{
The notation $*!$ indicates that the form  is one of the 14 quadratic forms that may be regular.  The regularity has been proved only under the generalized Riemann Hypothesis  \cite{Oliver2014}.
}
\end{minipage}
\end{table}

\begin{table}[htbp]
\caption{Ternary positive-definite quadratic forms with the same representations over $\IntegerRing$}
\label{Forty-nine groups of ternary positive definite quadratic forms representing the same numbers(2/3)}
\begin{scriptsize}
\begin{tabular}{p{1mm}p{20mm}p{4mm}p{4mm}p{4mm} p{4mm}p{4mm}p{4mm}
	lr}
 \hline
   No.
	& Determinant (Ratio) & $s_{11}$ & $s_{22}$ & $s_{33}$ & $s_{12}$ & $s_{13}$ & $s_{23}$
	& Integers not represented by the genus
	& Bravais Type\\
\hline
 26  &
${}^{**}3^3 5$ (5) & 5 & 5 & 8 & 5 & 4 & 2
 & \multirow{3}{*}{ $4 n + 2$, $3 n + 1$, $3^{2k+1}(3n + 1)$ }
 & Monoclinic(C) 
\\
   &
${}^{**}2^3 3^3$ (8) & 5 & 8 & 8 & 4 & 2 & 8
 &
 & Monoclinic(C)
 \\
   &
${}^{**}3^3 11$ (11) & 5 & 8 & 9 & 2 & 3 & 6
 & 
 & Triclinic \\
\hline
 27 &
${}^{**}3^2 5$ (5) & 3 & 5 & 5 & 3 & 3 & 5
 & \multirow{3}{*}{ $4 n + 2$, $3^{2k}(3n + 1)$ }
 & Orthorhombic(I) \\
   &
${}^{**}2^3 3^2$ (8) & 3 & 5 & 5 & 0 & 0 & -2
 &
 & Orthorhombic(C) \\
   &
${}^{**}3^2 11$ (11) & 3 & 5 & 8 & -3 & 0 & -2
 &
 & Monoclinic(C) \\
\hline
 28  &
${}^{**}3 \cdot 5$ (5) & 1 & 4 & 5 & 0 & -1 & -4
 & \multirow{3}{*}{ $4 n + 2$, $3^{2k+1}(3n + 1)$ }
 & Orthorhombic(I) \\
   &
${}^{**}2^3 3$ (8) & 1 & 4 & 7 & 0 & 0 & -4
 &
 & Orthorhombic(C) \\
   &
${}^{**}3 \cdot 11$ (11) & 1 & 5 & 7 & -1 & 0 & -1
 &
 & Monoclinic(C) \\
\hline
 29 &
${}^{**}2^4 3$ (3) & 1 & 8 & 8 & 0 & 0 & -8
 & \multirow{3}{*}{ $4 n + 2$, $4 n + 3$, $2^{2k} (8 n + 5)$ }
 & Hexagonal \\
   &
${}^{**}2^4 11$ (11) & 1 & 8 & 24 & 0 & 0 & -8
 &
 & Orthorhombic(C) \\
   &
${}^{*}2^6 3$ (12) & 1 & 8 & 24 & 0 & 0 & 0
 &
 & Orthorhombic(P) \\
\hline
 30 &
${}^{**}3$ (3) & 1 & 2 & 2 & 0 & 0 & -2
 & \multirow{3}{*}{ $2^{2k} (8 n + 5)$ }
 & Hexagonal \\
   &
${}^{**}11$ (11) & 1 & 2 & 6 & 0 & 0 & -2
 &
 & Orthorhombic(C) \\
   &
${}^{**}2^2 3$ (12) & 1 & 2 & 6 & 0 & 0 & 0
 &
 & Orthorhombic(P) \\
\hline
 31 &
${}^{**}2^{-1} 3$ (3) & 1 & 1 & 2 & -1 & 0 & 0
 & \multirow{3}{*}{ $2^{2k + 1} (8 n + 5)$ }
 & Hexagonal \\
   &
${}^{**}2^{-1} 11$ (11) & 1 & 2 & 3 & 0 & -1 & 0
 &
 & Orthorhombic(C) \\
   &
${}^{**}2 \cdot 3$ (12) & 1 & 2 & 3 & 0 & 0 & 0
 &
 & Orthorhombic(P) \\
\hline
 32 &
${}^{**}2^4$ (1) & 3 & 3 & 3 & -2 & -2 & -2
 & \multirow{3}{*}{ $4 n + 1$, $4 n + 2$, $2^{2k}(8n + 7)$ }
 & Cubic(I) \\
   &
${}^{**}2^6$ (4) & 3 & 3 & 8 & -2 & 0 & 0
 &
 & Orthorhombic(C) \\
   &
${}^{**}2^4 3^2$ (9) & 3 & 3 & 19 & -2 & -2 & -2
 &
 & Orthorhombic(F) \\
\hline
 33 &
${}^{**}2$ (1) & 1 & 1 & 3 & 1 & 1 & 1
 & \multirow{3}{*}{ $4 n + 2$, $2^{2k+1}(8n + 7)$ }
 & Rhombohedral \\
   &
${}^{**}2^3$ (4) & 1 & 3 & 3 & 0 & 0 & -2
 &
 & Orthorhombic(C) \\
   &
${}^{*}2 \cdot 3^2$ (9) & 1 & 3 & 7 & 1 & 1 & 2
 &
 & Monoclinic(C) \\
\hline
 34 &
${}^{**}1$ (1) & 1 & 1 & 1 & 0 & 0 & 0
 & \multirow{3}{*}{ $2^{2 k}(8 n + 7)$ }
 & Cubic(P) \\
   &
${}^{**}2^2$ (4) & 1 & 2 & 2 & 0 & 0 & 0
 &
 & Tetragonal(P) \\
   &
${}^{**}3^2$ (9) & 1 & 2 & 5 & 0 & 0 & -2
 &
 & Orthorhombic(C) \\
\hline
 35 &
${}^{**}2^{-1} 3^3$ (2) & 1 & 4 & 4 & 1 & 1 & 2
 & \multirow{3}{*}{ $3 n - 1$, $3^{2k+1}(3 n + 1)$ }
 & Orthorhombic(I) \\
   &
${}^{**}2 \cdot 3^3$ (8) & 1 & 6 & 9 & 0 & 0 & 0
 &
 & Orthorhombic(P) \\
   &
${}^{*!}2^{-2} 3^3 11$ (11) & 1 & 6 & 13 & 0 & -1 & -3
 &
 & Monoclinic(C) \\
\hline
 36 &
${}^{**}2^{-1} 5^2$ (2) & 1 & 4 & 4 & 1 & 1 & 3
 & \multirow{3}{*}{ $5^{2 k} (5 n \pm 2)$ }
 & Orthorhombic(I) \\
   &
${}^{*}2^{-2} 3 \cdot 5^2$ (3) & 1 & 4 & 5 & -1 & 0 & 0
 &
 & Orthorhombic(C) \\
   &
${}^{**}2 \cdot 5^2$ (8) & 1 & 5 & 10 & 0 & 0 & 0
 &
 & Orthorhombic(P) \\
\hline
 37 &
${}^{**}2^{-1} 5$ (2) & 1 & 2 & 2 & 1 & 1 & 2
 & \multirow{3}{*}{ $5^{2k+1}(5 n \pm 2)$ }
 & Orthorhombic(I) \\
   &
${}^{*}2^{-2} 3 \cdot 5$ (3) & 1 & 2 & 2 & 0 & 0 & -1
 &
 & Orthorhombic(C) \\
   &
${}^{**}2 \cdot 5$ (8) & 1 & 2 & 5 & 0 & 0 & 0
 &
 & Orthorhombic(P) \\
\hline
 38 &
${}^{**}2^{-2} 5^2$ (1) & 2 & 2 & 2 & -1 & -1 & -1
 & \multirow{3}{*}{ $5^{2k}(5n \pm 1)$ }
 & Rhombohedral \\
   &
${}^{**}5^2$ (4) & 2 & 3 & 5 & -2 & 0 & 0
 &
 & Orthorhombic(C) \\
   &
${}^{**}5^2$ (4) & 2 & 2 & 7 & -1 & -1 & -1
 &
 & Monoclinic(C) \\
\hline
 39 &
${}^{**}2^{-2} 5$ (1) & 1 & 1 & 2 & 1 & 1 & 1
 & \multirow{3}{*}{ $5^{2k+1}(5n \pm 1)$ }
 & Rhombohedral \\
   &
${}^{**}5$ (4) & 1 & 2 & 3 & 0 & 0 & -2
 &
 & Orthorhombic(C) \\
   &
${}^{**}5$ (4) & 1 & 2 & 3 & -1 & 0 & -1
 &
 & Monoclinic(C) \\
\hline
 40 &
$2^4 3 \cdot 13$ (39) & 5 & 12 & 12 & -4 & -4 & 0
 & \multirow{4}{*}{ $4 n + 2$, $4 n + 3$, $2^{2k}(8n + 1)$ }
 & Monoclinic(C)
%
 \\
   &
$2^4 71$ (71) & 5 & 12 & 21 & -4 & -2 & -4
 &
 & Triclinic
%
 \\
   &
$2^4 79$ (79) & 5 & 12 & 24 & -4 & 0 & -8
 &
 & Triclinic
%
%
 \\
   &
$2^4 5 \cdot 19$ (95) & 5 & 12 & 28 & -4 & -4 & 0
 &
 & Triclinic
%
 \\
\hline
 41 &
$3 \cdot 13$ (39) & 3 & 3 & 5 & 0 & -2 & -2
 & \multirow{4}{*}{ $2^{2k}(8n + 1)$ }
 & Monoclinic(C) 
%
\\
   &
$71$ (71) & 3 & 5 & 6 & 2 & 2 & 4
 &
 & Triclinic
%
 \\
   &
$79$ (79) & 3 & 5 & 6 & -2 & -2 & 0
 &
 & Triclinic
%
%
 \\
   &
$5 \cdot 19$ (95) & 3 & 5 & 7 & -2 & 0 & -2
 &
 & Triclinic
%
 \\
\hline
 42 &
$2^{-1} 3 \cdot 13$ (39) & 3 & 3 & 3 & 3 & 1 & 1
 & \multirow{4}{*}{ $2^{2k+1}(8n + 1)$ }
 & Monoclinic(C)
%
 \\
   &
$2^{-1} 71$ (71) & 3 & 3 & 5 & 1 & 3 & 2
 &
 & Triclinic
%
 \\
   &
$2^{-1} 79$ (79) & 3 & 3 & 5 & -1 & -2 & -1
 &
 & Triclinic
%
%
 \\
   &
$2^{-1} 5 \cdot 19$ (95) & 3 & 5 & 5 & 3 & 2 & 5
 &
 & Triclinic
%
 \\
\hline
\end{tabular}
\end{scriptsize}
\end{table}

\begin{table}[htbp]
\caption{Ternary positive-definite quadratic forms with the same representations over $\IntegerRing$}
\label{Forty-nine groups of ternary positive definite quadratic forms representing the same numbers(3/3)}
\begin{scriptsize}
\begin{tabular}{p{1mm}p{20mm}p{4mm}p{4mm}p{4mm} p{4mm}p{4mm}p{4mm}
	lr}
 \hline
   No.
	& Determinant (Ratio) & $s_{11}$ & $s_{22}$ & $s_{33}$ & $s_{12}$ & $s_{13}$ & $s_{23}$
	& Integers not represented by the genus
	& Bravais Type\\
\hline
 43  &
${}^{**}2^4 7$ (7) & 5 & 5 & 5 & 2 & 2 & 2
 & \multirow{4}{*}{ $4 n + 2$, $4 n + 3$, $2^{2k}(8n + 1)$ }
 & Rhombohedral \\
   &
${}^{**}2^4 3 \cdot 5$ (15) & 5 & 5 & 12 & -2 & -4 & -4
 &
 & Orthorhombic(F) \\
   &
${}^{**}2^4 23$ (23) & 5 & 8 & 12 & 0 & -4 & -8
 &
 & Monoclinic(C) \\
   &
${}^{*}2^6 7$ (28) & 5 & 8 & 12 & 0 & -4 & 0
 &
 & Monoclinic(P) \\
\hline
 44  &
${}^{**}7$ (7) & 2 & 2 & 3 & 2 & 2 & 2
 & \multirow{4}{*}{ $2^{2k}(8n + 1)$ }
 & Rhombohedral \\
   &
${}^{**}3 \cdot 5$ (15) & 2 & 3 & 3 & -2 & 0 & 0
 &
 & Orthorhombic(C) \\
   &
${}^{**}23 $ (23) & 2 & 3 & 5 & -2 & 0 & -2
 &
 & Monoclinic(C) \\
   &
${}^{**}2^2 7$ (28) & 2 & 3 & 5 & 0 & 0 & -2
 &
 & Monoclinic(P) \\
\hline
 45  &
${}^{**}2^{-1} 7$ (7) & 1 & 1 & 5 & 1 & 1 & 1
 & \multirow{4}{*}{ $2^{2k+1}(8n + 1)$ }
 & Rhombohedral \\
   &
${}^{**}2^{-1} 3 \cdot 5$ (15) & 1 & 3 & 3 & 1 & 1 & 1
 &
 & Orthorhombic(I) \\
   &
${}^{**}2^{-1} 23$ (23) & 1 & 3 & 5 & 1 & 1 & 3
 &
 & Monoclinic(C) \\
   &
${}^{**}2 \cdot 7$ (28) & 1 & 3 & 5 & 0 & 0 & -2
 &
 & Monoclinic(P) \\
\hline
 46  &
${}^{**}2^{-2} 3^3$ (1) & 2 & 2 & 2 & 1 & 1 & 1
 & \multirow{4}{*}{ $3n + 1$, $3^{2k+1}(3 n - 1)$ }
 & Rhombohedral \\
   &
${}^{**}3^3$ (4) & 2 & 3 & 5 & 0 & -2 & 0
 &
 & Orthorhombic(C) \\
   &
${}^{**}3^3$ (4) & 2 & 2 & 8 & 1 & 2 & 2
 &
 & Monoclinic(C) \\
   &
${}^{*}2^{-2} 3^3 7$ (7) & 2 & 3 & 8 & 0 & -1 & 0
 &
 & Monoclinic(P) \\
\hline
 47  &
${}^{**}2^{-2} 3^2$ (1) & 1 & 1 & 3 & -1 & 0 & 0
 & \multirow{4}{*}{ $3^{2k} (3 n - 1)$ }
 & Hexagonal \\
   &
${}^{**}3^2$ (4) & 1 & 3 & 3 & 0 & 0 & 0
 &
 & Tetragonal(P) \\
   &
${}^{**}3^2$ (4) & 1 & 3 & 4 & 0 & -1 & -3
 &
 & Orthorhombic(I) \\
   &
${}^{*}2^{-2} 3^2 7$ (7) & 1 & 3 & 6 & 0 & 0 & -3
 &
 & Orthorhombic(C) \\
\hline
 48  &
${}^{**}2^{-2} 3$ (1) & 1 & 1 & 1 & -1 & 0 & 0
 & \multirow{4}{*}{ $3^{2 k + 1}(3 n - 1)$ }
 & Hexagonal \\
   &
${}^{**}3$ (4) & 1 & 1 & 3 & 0 & 0 & 0
 &
 & Tetragonal(P) \\
   &
${}^{**}3$ (4) & 1 & 2 & 2 & 1 & 1 & 1
 &
 & Orthorhombic(I) \\
   &
${}^{*}2^{-2} 3 \cdot 7$ (7) & 1 & 2 & 3 & -1 & 0 & 0
 &
 & Orthorhombic(C) \\
\hline
 49  &
${}^{*}2^{-2} 3^3$ (1) & 1 & 1 & 9 & -1 & 0 & 0
 & \multirow{4}{*}{ $3 n - 1$, $3^{2k + 1}(3 n - 1)$ }
 & Hexagonal \\
   &
${}^{*}2^{-2} 3^3$ (1) & 1 & 3 & 3 & 0 & 0 & -3
 &
 & Hexagonal \\
   &
${}^{*}3^3$ (4) & 1 & 3 & 10 & 0 & -1 & -3
 &
 & Orthorhombic(I) \\
   &
${}^{**}3^3$ (4) & 1 & 3 & 9 & 0 & 0 & 0
 &
 & Orthorhombic(P) \\
\hline
 50  &
${}^{**}2^{-1}$ (1) & 1 & 1 & 1 & 1 & 1 & 1
 & \multirow{4}{*}{ $2^{2k + 1} (8 n + 7)$ }
 & Cubic(F) \\
   &
${}^{**}2$ (4) & 1 & 1 & 2 & 0 & 0 & 0
 &
 & Tetragonal(P) \\
   &
${}^{**}2^{-1} 3^2$ (9) & 1 & 2 & 3 & 0 & -1 & -2
 &
 & Orthorhombic(I) \\
   &
${}^{**}2^3$ (16) & 1 & 2 & 4 & 0 & 0 & 0
 &
 & Orthorhombic(P) \\
\hline
 51  &
${}^{**}2^{-1} 3^3$ (2) & 2 & 2 & 5 & 2 & 2 & 1
 & \multirow{5}{*}{ $3 n + 1$, $3^{2k+1}(3n + 1)$ }
 & Orthorhombic(F) \\
   &
${}^{**}2^{-2} 3^3 5$ (5) & 2 & 5 & 5 & 2 & 1 & 5
 &
 & Monoclinic(C) \\
   &
${}^{**}2 \cdot 3^3$ (8) & 2 & 5 & 6 & -2 & 0 & 0
 &
 & Orthorhombic(C) \\
   &
${}^{**}2 \cdot 3^3$ (8) & 2 & 5 & 6 & -1 & 0 & -3
 &
 & Triclinic \\
   &
${}^{**}2^{-2} 3^3 11$ (11) & 2 & 5 & 8 & -1 & -1 & -2
 &
 & Triclinic \\
\hline
 52  &
${}^{**}2^{-1} 3^2$ (2) & 2 & 2 & 2 & 2 & 2 & 1
 & \multirow{5}{*}{ $3^{2k} (3 n + 1)$ }
 & Tetragonal(I) \\
   &
${}^{**}2^{-2} 3^2 5$ (5) & 2 & 2 & 3 & -1 & 0 & 0
 &
 & Orthorhombic(C) \\
   &
${}^{**}2 \cdot 3^2$ (8) & 2 & 3 & 3 & 0 & 0 & 0
 &
 & Tetragonal(P) \\
   &
${}^{**}2 \cdot 3^2$ (8) & 2 & 2 & 5 & 1 & 1 & 1
 &
 & Monoclinic(C) \\
   &
${}^{**}2^{-2} 3^2 11$ (11) & 2 & 3 & 5 & 0 & -1 & -3
 &
 & Monoclinic(C) \\
\hline
 53  &
${}^{**}2^{-1} 3$ (2) & 1 & 1 & 2 & 0 & -1 & -1
 & \multirow{5}{*}{ $3^{2k + 1} (3 n + 1)$ }
 & Tetragonal(I) \\
   &
${}^{**}2^{-2} 3 \cdot 5$ (5) & 1 & 1 & 4 & 0 & -1 & 0
 &
 & Orthorhombic(C) \\
   &
${}^{**}2 \cdot 3$ (8) & 1 & 1 & 6 & 0 & 0 & 0
 &
 & Tetragonal(P) \\
   &
${}^{**}2 \cdot 3$ (8) & 1 & 2 & 4 & 1 & 1 & 2
 &
 & Monoclinic(C) \\
   &
${}^{**}2^{-2} 3 \cdot 11$ (11) & 1 & 2 & 5 & 1 & 1 & 1
 &
 & Monoclinic(C) \\
\hline
\end{tabular}
\end{scriptsize}
\end{table}

Overall, 53 sets consisting of 151 quadratic forms were obtained as the candidates that may have the completely identical representations as a non-equivalent form.
It should be noted that all of them 
can be obtained if the algorithm in Section~\ref{Algorithm}
is carried out under the constraint $s_{33} \leq 48$.
Since the number is small compared with the upper bound $115$ of the searched region,
we expect that the 53 cases, in addition to those in the hexagonal and rhombohedral families, provide all the searched quadratic forms, up to the action of $GL_3(\IntegerRing)$ and constant multiple.

The tables also have information about the regularity of each quadratic form. This is based on the tables of Jagy \textit{et\ al.\ }\cite{Jagy97} and \cite{Oh2011}.
If a form in Tables~\ref{Forty-nine groups of ternary positive definite quadratic forms representing the same numbers(1/3)}--\ref{Forty-nine groups of ternary positive definite quadratic forms representing the same numbers(3/3)}
is regular, it is marked with $**$ or $*$.
In the tables, 38 out of the 53 cases consist of regular (or possibly regular) quadratic forms.
The others are neither regular nor spinor-regular from the result in \cite{Benham90}.

In general, it is difficult to exactly determine the set of integral representations for a ternary quadratic form (\CF \cite{Ono97}).
Although it is occasionally possible to prove that the forms have the same representations, without providing the exact set of their representations, as done for the hexagonal and the rhombohedral families,
the author could not do this for the non-regular forms in the tables.
When two positive-definite quadratic forms $f$, $g \in {\rm Sym}^2 (\RationalField^3)^*$ satisfy $q_\IntegerRing(f) = q_\IntegerRing(g)$,
it can be proved that they are equivalent over $\RationalField$ if and only if $\det f / \det g$ is a square in $\RationalField^\times$.


%

\section{Preliminaries for the main theorem}
\label{Preliminaries for the main theorem}

Some basic properties on simultaneous representations that will be used in the proof of Theorem \ref{thm:main result over RationalField} are presented herein.

Herein, $k$ is a field with ${\rm char}\ k \neq 2$ as defined in Section \ref{Notation and symbols}.
The following lemmas and Corollary \ref{cor: anisotropic iff anisotropic} are repeatedly used to prove the theorem.
Lemma \ref{lem: diagonalization over F} can be seen as a generalization of the well-known fact that any pairs of positive-definite quadratic forms are simultaneously diagonalized over $\RealField$:
\begin{lem}\label{lem: diagonalization over F}
For any $(A, B) \in {\rm Sym}^2 (k^n)^* \otimes_{k} k^2$ with $\det A \neq 0$,
let $K \subset \bar{k}$ (\textit{resp.}, $K_2 \subset \bar{k}$) be the field generated by all the roots (\textit{resp.}, all the roots of multiplicity $> 1$) of $\det(Ax - B) = 0$ in $\bar{k}$ over $k$. 
If $(A, B)$ is anisotropic over $K_2$, then
$A$ and $B$ are simultaneously diagonalized by the action of $GL_3(K)$.
Consequently, for any $\alpha \in K$, 
the rank of $A \alpha - B$ equals $n$ minus the multiplicity of $\alpha$ as a root of $\det(A x - B) = 0$.
\end{lem}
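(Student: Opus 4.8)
The plan is to reduce the simultaneous diagonalization of the pencil $(A,B)$ to the diagonalization of a single self-adjoint operator. Since $\det A \neq 0$, I would set $M := A^{-1}B$ and equip $\bar k^n$ with the symmetric bilinear form $\langle u, v\rangle_A := \tr{u} A v$, which is nondegenerate because $A$ is. As $A$ and $B$ are symmetric, $M$ is self-adjoint for this form: using $\tr{M} = BA^{-1}$, both $\langle Mu, v\rangle_A$ and $\langle u, Mv\rangle_A$ equal $\tr{u} B v$. The eigenvalues of $M$ are exactly the roots of $\det(Ax-B)=0$, all of which lie in $K$ by definition. The key observation is that an $A$-orthogonal basis of eigenvectors of $M$ diagonalizes both forms at once: $A$-orthogonality diagonalizes $A$, and for an eigenvector $v$ with $Mv=\alpha v$ one has $\tr{u} B v = \langle u, Mv\rangle_A = \alpha\langle u,v\rangle_A$, so $B$ is diagonal in that basis too. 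Everything thus reduces to producing such a basis over $K$.

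First I would record the standard fact that the generalized eigenspaces $W_\alpha := \ker(M-\alpha I)^n$ for distinct eigenvalues are mutually $A$-orthogonal (if $(M-\alpha)^j u = 0$ then $\langle u, (M-\alpha)^j w\rangle_A = 0$ for all $w\in W_\beta$, and $(M-\alpha)^j$ is invertible on $W_\beta$ when $\beta\neq\alpha$). Since $A$ is nondegenerate on $\bar k^n$ and the decomposition $\bar k^n = \bigoplus_\alpha W_\alpha$ is $A$-orthogonal, $A$ restricts to a nondegenerate form on each $W_\alpha$. For a simple root, $W_\alpha$ is a line spanned by an eigenvector on which $A$ is automatically nonzero, so nothing needs to be done there.

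The crux is to rule out Jordan blocks at the repeated roots, and this is precisely where the hypothesis enters. Suppose $\alpha$ is a root of multiplicity $>1$, so $\alpha\in K_2$, and suppose $M|_{W_\alpha}$ had a Jordan chain of length at least two, \IE there were $u,v\in W_\alpha$ with $Mu=\alpha u$, $u\neq 0$, and $Mv=\alpha v + u$. Applying self-adjointness to $\langle Mv, u\rangle_A = \langle v, Mu\rangle_A$ yields $\langle u,u\rangle_A = 0$, whence also $\tr{u} B u = \alpha\,\tr{u} A u = 0$. But $W_\alpha$ and its eigenvectors are defined over $K_2$, so $u$ would be a nonzero vector in $K_2^n$ with $A(u)=B(u)=0$, contradicting anisotropy over $K_2$. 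Hence $M|_{W_\alpha}=\alpha\,\mathrm{Id}$, the generalized eigenspace coincides with the eigenspace, and $M$ is diagonalizable over $K$.

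It then remains to diagonalize $A$ within each $W_\alpha$: since $\mathrm{char}\,k\neq 2$ and $A$ is nondegenerate on $W_\alpha$, the restricted form has an $A$-orthogonal basis (an anisotropic vector exists by polarization; split off its orthogonal complement and iterate). All the $W_\alpha$ and these bases are defined over $K$ (recall $K_2\subseteq K$), so assembling them gives a matrix $w\in GL_n(K)$, whose columns are the basis, with $\tr{w} A w$ and $\tr{w} B w$ diagonal. For the final assertion, write $\tr{w} A w = \mathrm{diag}(a_i)$ and $\tr{w} B w = \mathrm{diag}(b_i)$ with every $a_i\neq 0$; then $\det(Ax-B) = (\det w)^{-2}\prod_i (a_i x - b_i)$, so the multiplicity of $\alpha$ equals the number of indices with $a_i\alpha - b_i = 0$, while $\tr{w}(A\alpha - B)w = \mathrm{diag}(a_i\alpha - b_i)$ shows that the rank of $A\alpha - B$ is $n$ minus that same number. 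The main obstacle is the Jordan-block step; the rest is routine linear algebra over $K$.
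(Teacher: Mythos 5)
Your proof is correct and follows essentially the same route as the paper: both diagonalize the pencil via the Jordan decomposition of $S = A^{-1}B$, exploit the self-adjointness relation $B = AS = \tr{S}A$ with respect to the form defined by $A$, and use anisotropy over $K_2$ to produce a contradiction (a vector isotropic for both $A$ and $B$) whenever a Jordan block of size greater than one occurs at a multiple root. Your write-up is somewhat more explicit than the paper's about orthogonality of the generalized eigenspaces, the diagonalization of $A$ within each eigenspace, and the final rank count, but the underlying argument is the same.
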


\begin{proof}
Let $w \in GL_n(K)$ be the matrix that provides a Jordan decomposition of $S := A^{-1} B$.
Since $B$ is symmetric, we have $\tr{w} B w = (\tr{w} A w) (w^{-1} S w) = \tr{(w^{-1} S w)} (\tr{w} A w)$.
Hence, if $w^{-1} S w$ is diagonal, 
$(\tr{w} A w$, $\tr{w} B w)$ is a simultaneous block-diagonalization of $(A, B)$
and each block corresponds to an eigenspace of $S$.
If $[A_1, \cdots, A_m]$ and $[B_1, \cdots, B_m]$ are the blocks of $A$ and $B$, then
at least one of $A_i$, $B_i$ is a constant multiple of the other.
Hence, $(A, B)$ can be simultaneously diagonalized.

We next assume that $S$ has a Jordan block of size $m > 1$
that corresponds to a multiple root $\alpha \in K_2$ of $\det(A x - B) = \det A \det(I x - S) = 0$.
Let $S_2$, $A_2$, $B_2$ be the $m \times m$ blocks of $w^{-1} S w$, $\tr{w} A w$, $\tr{w} B w$, respectively, that correspond to the eigenspace of $\alpha$.
Since we have $B_2 = A_2 S_2 = \tr{S}_2 A_2$,
the $(1, 1)$-entries of $A_2$ and $B_2$ equal $0$.
This is impossible since $(A, B)$ is anisotropic over $K_2$.
Hence, all the Jordan blocks in $S$ must have size $1$. The lemma is proved.
\end{proof}

In particular, $K_2 = k$ always holds for $n=3$.
Therefore, any $(A, B) \in V_k$ anisotropic over $k$, is simultaneously diagonalized by the action of $GL_3(K)$.
\begin{lem}\label{lem: equivalence conditions on isotropy}
We assume that $(A, B) \in V_k$ is linearly independent,
non-singular and anisotropic over $k$, and satisfies $\det A \neq 0$, ${\rm Disc}(A, B) \neq 0$.
Let $\alpha_i \in \bar{k}$ ($i = 1, 2, 3$) be the roots of $\det(A x - B) = 0$
and $K_i$ be the Galois closure of $k_i = k(\alpha_i)$ over $k$.
We choose $C_i \in {\rm Sym}^2 (k_i^2)^*$ satisfying $A \alpha_i - B \sim_{k_i} C_i \perp [ 0 ]$.
Then, 
$C_i$ is anisotropic over $K_i$ for at least two of $i = 1, 2, 3$ and all of $i$ with $k_i \neq k$.
\end{lem}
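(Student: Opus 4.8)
The plan is to reduce the statement to a counting problem for the set
\[
I := \{\, i : C_i \text{ is isotropic over } K \,\},
\]
where $K\subset\bar k$ is the splitting field of $\det(Ax-B)$, and then to bound $|I|$. Since ${\rm Disc}(A,B)\neq 0$ the three roots $\alpha_i$ are distinct, so by Lemma~\ref{lem: diagonalization over F} each $A\alpha_i-B$ has rank $2$ and the $C_i$ exist as non-degenerate binary forms. The same lemma diagonalizes $(A,B)$ over $K$, say $A\sim_K[a_1,a_2,a_3]$ and $B\sim_K[a_1\alpha_1,a_2\alpha_2,a_3\alpha_3]$ with $a_i\in K^\times$; restricting to the complement of the radical of $A\alpha_i-B$ gives $C_i\sim_K[a_j(\alpha_i-\alpha_j),a_k(\alpha_i-\alpha_k)]$ for $\{j,k\}=\{1,2,3\}\setminus\{i\}$. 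Setting $\delta_i:=-\det C_i$, a non-degenerate binary form over a field containing $k_i$ is isotropic iff its $\delta$ is a square, and over $K$ one has $\delta_i\equiv -a_ja_k(\alpha_i-\alpha_j)(\alpha_i-\alpha_k)\pmod{(K^\times)^2}$. Note $K_i\subseteq K$ always, with $K_i=K$ when $\alpha_i\notin k$ (the Galois closure of $k(\alpha_i)$ is then the splitting field) and $K_i=k$ when $\alpha_i\in k$. Because anisotropy passes to subfields, it suffices to prove $|I|\le 1$ and that every $i\in I$ has $\alpha_i\in k$.

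First I would show $|I|\neq 2$ by a product identity. Using the diagonal values above, $\delta_1\delta_2\delta_3\equiv\big(a_1a_2a_3\prod_{i<j}(\alpha_i-\alpha_j)\big)^2\pmod{(K^\times)^2}$, so $\delta_1\delta_2\delta_3$ is a square in $K$. Hence the classes $[\delta_1],[\delta_2],[\delta_3]$ in $K^\times/(K^\times)^2$ multiply to the trivial class, and if two of them are trivial the third is as well; equivalently $|I|=2$ cannot occur.

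Next I would rule out $|I|=3$ using anisotropy over $k$. If all three $C_i$ are isotropic over $K$, then the explicit diagonalization shows the four common zeros of $A=B=0$ (the base points of the pencil), whose coordinates satisfy $x_i^2\propto(\alpha_j-\alpha_k)/a_i$, all become $K$-rational: the squareness of every $\delta_i$ forces each ratio $x_i^2/x_j^2$ to be a square in $K$. The group ${\rm Gal}(K/k)$ acts on these four points and permutes the three singular members $A\alpha_i-B$ exactly as it permutes the roots $\alpha_i$; since $K$ is the splitting field, this latter action is faithful. Thus the image $H$ of ${\rm Gal}(K/k)$ in $S_4$ maps injectively to $S_3=S_4/V$, i.e.\ $H\cap V=\{e\}$ for the Klein four-group $V\trianglelefteq S_4$. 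An elementary inspection of the subgroups of $S_4$ shows that any subgroup meeting $V$ trivially fixes one of the four points; that fixed point is a $k$-rational common zero, contradicting anisotropy over $k$. (When $K=k$ this is immediate, the four zeros being already $k$-rational.) Hence $|I|\le 1$, which gives the assertion that at least two of the $C_i$ are anisotropic.

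Finally I would use Galois descent to locate the admissible bad index. As $A,B$ are defined over $k$, each $\sigma\in{\rm Gal}(K/k)$ sends $A\alpha_i-B$ to $A\alpha_{\sigma(i)}-B$ and preserves $(K^\times)^2$, so $I$ is stable under the induced permutation of $\{1,2,3\}$. Since $|I|\le 1$, any $i\in I$ is fixed by all of ${\rm Gal}(K/k)$, i.e.\ $\alpha_i\in k$ and $k_i=k$; therefore every index with $k_i\neq k$ lies outside $I$, so $C_i$ is anisotropic over $K=K_i$, as required. The main obstacle is that extending to $K$ may render the $k$-anisotropic pair $(A,B)$ isotropic, so the naive move of intersecting the two-dimensional totally isotropic subspaces produced by two isotropic $C_i$ only contradicts anisotropy over $K$, not over $k$. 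The essential work is exactly the step $|I|\neq 3$, where the $V\trianglelefteq S_4$ structure reconnects the isotropy of all three $C_i$ over $K$ to a genuinely $k$-rational common zero.
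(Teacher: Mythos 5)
Your proof is correct, and it certifies the key rationality step by a genuinely different mechanism than the paper. Both arguments open identically (simultaneous diagonalization over the splitting field $K$ via Lemma~\ref{lem: diagonalization over F}, so that $C_i\sim_K[a_j(\alpha_i-\alpha_j),\,a_k(\alpha_i-\alpha_k)]$ with isotropy governed by $\delta_i=-\det C_i$ modulo squares), and both ultimately contradict anisotropy over $k$ by producing a $k$-rational base point of the pencil. The paper gets there by explicit construction: it splits into the three factorization types of $\det(Ax-By)$, writes the assumed isotropy as $d_jd_k(\alpha_i-\alpha_j)(\alpha_i-\alpha_k)=-\beta_i^2$, exhibits the common zero $\bigl(-\beta_2/d_1(\alpha_1-\alpha_2),\ \beta_1/d_2(\alpha_1-\alpha_2),\ \pm1\bigr)$ (or $(1/\beta_1,1/\beta_2,1/\beta_3)$ in the cubic case), and chooses the $\beta_i$ to be Galois conjugates so that the point descends to $k^3$. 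You instead note that $\delta_1\delta_2\delta_3\equiv\bigl(a_1a_2a_3\prod_{i<j}(\alpha_i-\alpha_j)\bigr)^2$ forces $|I|\in\{0,1,3\}$, and dispose of $|I|=3$ by the classical $V\trianglelefteq S_4$ structure on the four base points: since ${\rm Gal}(K/k)\to S_4\to S_3$ is the faithful action on the roots, the image meets $V$ trivially, and every subgroup of $S_4$ meeting $V$ trivially (trivial, generated by a transposition or a $3$-cycle, or a point stabilizer) fixes a base point, which is then $k$-rational; the Galois-stability of $I$ combined with $|I|\le 1$ then places the one permitted isotropic index at a rational root, which is exactly what the lemma asserts. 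Your packaging buys uniformity --- no case split on the splitting type of the cubic, and the ``exactly two isotropic'' case is eliminated for free by the discriminant identity --- at the cost of invoking the pencil-of-conics picture; the paper's version is longer but fully explicit and checkable by hand. If you write yours up, the two points to make explicit are (i) that the composite ${\rm Gal}(K/k)\to S_4\to S_3$ really is the permutation action on the roots, via the identification of each singular member $A\alpha_i-B$ with the partition of the four base points into the pairs cut out by its two lines, and (ii) the subgroup inspection itself.
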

\begin{proof}
Since ${\rm Disc}(A, B) \neq 0$, 
the vector $0 \neq v_i \in k_i^3$ with $(A \alpha_i - B) v_i = 0$ is uniquely determined, up to constant multiple of $k_i^\times$.
From $(A \alpha_i - B) v_i = 0$ and $(A \alpha_j - B) v_j = 0$,
we have $\tr{v}_i A v_j =  \tr{v}_i B v_j = 0$ for any distinct $i, j = 1, 2, 3$.
Hence, if $w$ equals the matrix $\tr{(v_1\ v_2\ v_3)}$, then
$w A \tr{w}$ and $w B \tr{w}$ are diagonal.
It may be assumed that $[d_1, d_2, d_3]$ and $[d_1 \alpha_1, d_2 \alpha_2, d_3 \alpha_3]$ are diagonal entries of $A$ and $B$.
For any $i, j = 1, 2, 3$, 
if $k_i, k_j \neq k$ are conjugate over $k$,
it may be assumed that 
the isomorphism $\sigma_{ij} : k_i \rightarrow k_j$ over $k$ induced by $\alpha_i \mapsto  \alpha_j$ maps $v_i$ to $v_j$.
Then, $\sigma_{ij}$ maps $d_i$ to $d_j$.
We now have the following:
\begin{small}
\begin{eqnarray}\label{eq: transform of (A,B)}
	\left( w, 
		\begin{pmatrix}	
			\alpha_1 & -1 \\
			\alpha_2 & -1 
		\end{pmatrix}	
	\right) 
	\cdot
	(A, B) 
	=
	\left( 
		\begin{pmatrix}	
			0 & 0 & 0 \\
			0 & d_2 (\alpha_1 - \alpha_2) & 0 \\
			0 & 0 & d_3 (\alpha_1 - \alpha_3)
		\end{pmatrix},
		\begin{pmatrix}	
			d_1 (\alpha_2 - \alpha_1) & 0 & 0 \\
			0 & 0 & 0 \\
			0 & 0 & d_3 (\alpha_2 - \alpha_3)
		\end{pmatrix}	
	\right). \nonumber \\
\end{eqnarray}
\end{small}
In what follows, the pair in the right-hand side is denoted by $(\tilde{A}, \tilde{B})$.
For the proof, we may assume one of the following:
\begin{enumerate}[(i)]
	\item $k_1 = k_2 = k_3 = k$,
	\item $k_1, k_2$ are quadratic over $k$,
	\item $k_1, k_2, k_3$ are cubic over $k$.
\end{enumerate}
We shall show that the assumption that the diagonal $D_1 = [d_2 (\alpha_1 - \alpha_2), d_3 (\alpha_1 - \alpha_3)]$ 
is isotropic over $K_1$ and $D_2 = [ d_2 (\alpha_1 - \alpha_2), d_3 (\alpha_1 - \alpha_3) ]$ is isotropic over $K_2$, 
leads to a contradiction.
In fact, the assumption holds if and only if 
some $\beta_1 \in K_1$, $\beta_2 \in K_2$ satisfy 
$d_2 d_3 (\alpha_1 - \alpha_2)(\alpha_1 - \alpha_3) = -\beta_1^2$ and 
$d_1 d_3 (\alpha_2 - \alpha_1)(\alpha_2 - \alpha_3) = -\beta_2^2$.
Then, $(\tilde{A}, \tilde{B})({\mathbf x}) = 0$ holds,
if we put ${\mathbf x} := (-\beta_2 / d_1 (\alpha_1 - \alpha_2), \beta_1 / d_2(\alpha_1 - \alpha_2), \pm 1)$.
Hence, 
$(A, B)$ is isotropic over $k$ in case (i).
The same is true in case (ii), 
since 
$(A, B)({\mathbf x} w) = (\tilde{A}, \tilde{B})({\mathbf x}) = 0$ and ${\mathbf x} w \in k^3$, 
if we choose $\beta_2$ such that $\beta_2 = \sigma_{12}(\beta_1)$.
In case (iii),
$A \alpha_i - B$ is isotropic over $K_1 = K_2 = K_3$ for all $i = 1, 2, 3$
due to conjugacy.
There exist $\beta_i \in k_i$ ($i = 1, 2, 3$) such that either of the following holds for every distinct $1 \leq h, i, j \leq 3$,
considering that $d_i d_j (\alpha_h - \alpha_i)(\alpha_h - \alpha_j)$ belongs to $k_h$:
\begin{itemize}
	\item $d_i d_j (\alpha_h - \alpha_i)(\alpha_h - \alpha_j) = -\beta_h^2$,
	\item $d_i d_j (\alpha_h - \alpha_i)(\alpha_h - \alpha_j) = -\beta_h^2 (\alpha_1 - \alpha_2)^2 (\alpha_2 - \alpha_3)^2 (\alpha_3 - \alpha_1)^2$.
\end{itemize}
It is possible to choose $\beta_i$ so that $\beta_1, \beta_2, \beta_3$ are conjugate.
If we put ${\mathbf x}_2 := (1/\beta_1, 1/\beta_2, 1/\beta_3)$,
we have $(\tilde{A}, \tilde{B})({\mathbf x}_2) = (A, B)({\mathbf x}_2 \tr{w}) = 0$ and ${\mathbf x}_2 w \in k^3$.
Thus, the lemma is proved.
\end{proof}

In general, for any global field $k$, prime $v$ of $k$, and $(A_i, B_i) \in V_k$ ($i = 1, 2$), if we start from $q_k(A_1, B_1) = q_k(A_2, B_2)$, 
the following are immediately obtained:
\begin{enumerate}[($C$1)]
	\item If both $(A_1, B_1)$ and $(A_2, B_2)$ are anisotropic over $k_v$,
		$q_{k_v} (A_1, B_1) = q_{k_v}(A_2, B_2)$, since the topological closure of $q_k(A_i, B_i)$ in $k_v^2$ equals 
		$\{ (0, 0) \} \cup q_{k_v}(A_i, B_i)$.

	\item If both $(A_1, B_1)$ and $(A_2, B_2)$ are isotropic over $k_v$,
		$q_{k_v} (A_1 \pi - B_1) = q_{k_v}(A_2 \pi - B_2) = k_v$ for any $\pi \in k_v$.

	\item If $(A_1, B_1)$ is anisotropic and $(A_2, B_2)$ is isotropic over $k_v$,
		$A_1 \pi - B_1$ is isotropic over $k_v$ for any $\pi \in k_v$.

\end{enumerate}

As a corollary of Lemma \ref{lem: equivalence conditions on isotropy},
($C$3) is excluded in the following case:
\begin{cor}\label{cor: anisotropic iff anisotropic}
Let $v$ be a prime of $k$.
We assume that $(A_1, B_1), (A_2, B_2) \in V_k$ 
satisfy $q_k(A_1, B_1) = q_k(A_2, B_2)$, and $\det (A_1 x - B_1 y)$ completely splits in $k_v$.
In this case, either of the above ($C$1) or ($C$2) is true,
hence $q_{k_v} (A_1 \pi - B_1) = q_{k_v}(A_2 \pi - B_2)$ for any $\pi \in k_v$.
In particular, $(A_1, B_1)$ is isotropic over $k_v$ if and only if so is $(A_2, B_2)$.
\end{cor}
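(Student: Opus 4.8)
The plan is to argue by contradiction against the conclusion that ($C$3) imposes. Assume ($C$3) holds at the finite prime $v$, so that $(A_1, B_1)$ is anisotropic over $k_v$, $(A_2, B_2)$ is isotropic over $k_v$, and therefore $A_1 \pi - B_1$ is isotropic over $k_v$ for \emph{every} $\pi \in k_v$. I will refute this by exhibiting a single $\pi \in k_v$ for which $A_1 \pi - B_1$ is anisotropic. Granting this, ($C$3) is impossible; the same argument applied with the indices interchanged (using that $\det(A_2 x - B_2 y)$ splits completely over $k_v$ as well, being a scalar multiple of $\det(A_1 x - B_1 y)$) rules out the companion case in which $(A_1, B_1)$ is isotropic and $(A_2, B_2)$ anisotropic. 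Only ($C$1) and ($C$2) then remain, and in both the isotropy of the two pairs over $k_v$ coincides; moreover applying the linear map $(a, b) \mapsto \pi a - b$ to the equal sets of ($C$1) gives $q_{k_v}(A_1 \pi - B_1) = q_{k_v}(A_2 \pi - B_2)$ for every $\pi$, while in ($C$2) both sides are $k_v$.

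To build the desired $\pi$, I would first diagonalize $(A_1, B_1)$ over $k_v$. Since $\det(A_1 x - B_1 y)$ splits completely, the field $K$ of Lemma~\ref{lem: diagonalization over F} equals $k_v$ and $K_2 = k_v$, so by that lemma together with the anisotropy of $(A_1, B_1)$ we may take $A_1 = [d_1, d_2, d_3]$ and $B_1 = [d_1 \alpha_1, d_2 \alpha_2, d_3 \alpha_3]$ with $\alpha_1, \alpha_2, \alpha_3 \in k_v$ the roots (distinct because $\mathrm{Disc}(A_1, B_1) \neq 0$). Then $A_1 \pi - B_1 \sim_{k_v} [d_1(\pi - \alpha_1), d_2(\pi - \alpha_2), d_3(\pi - \alpha_3)]$, and the residual binary form at $\alpha_i$ is $C_i = [d_j(\alpha_i - \alpha_j) : j \neq i]$. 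Applying Lemma~\ref{lem: equivalence conditions on isotropy} over $k_v$ (where all $k_i = K_i = k_v$), at least two of $C_1, C_2, C_3$ are anisotropic over $k_v$; I fix one, say $C_1$.

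The heart of the proof — and the step I expect to be the main obstacle — is the local construction of $\pi$. For $\pi$ in a small enough neighborhood of $\alpha_1$, the binary form $C_1' := [d_2(\pi - \alpha_2), d_3(\pi - \alpha_3)]$ keeps the same square class of discriminant as $C_1$ and so stays anisotropic; and $A_1 \pi - B_1 \cong \langle d_1(\pi - \alpha_1) \rangle \perp C_1'$ is anisotropic over $k_v$ precisely when $C_1'$ fails to represent $-d_1(\pi - \alpha_1)$. Here I would use that over a \emph{non-archimedean} local field an anisotropic binary form represents only one coset of the norm subgroup $N_{L/k_v}(L^\times)$ with $L = k_v(\sqrt{-\operatorname{disc} C_1'})$, an index-two subgroup that is a union of square classes and hence omits entire square classes of $k_v^\times$. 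Because $\pi$ ranges over a neighborhood of $\alpha_1$, writing $\pi = \alpha_1 + u \varpi^m$ with $u$ a unit and $m$ large lets $d_1(\pi - \alpha_1)$ realize any prescribed square class at arbitrarily high valuation; choosing it in a class not represented by $C_1'$ makes $A_1 \pi - B_1$ anisotropic, contradicting ($C$3). This local bookkeeping of valuations and square classes of the norm group is the only delicate point; everything else is formal once Lemmas~\ref{lem: diagonalization over F} and~\ref{lem: equivalence conditions on isotropy} and the facts ($C$1)--($C$3) are in hand.
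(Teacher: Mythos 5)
Your central construction is exactly the content that the paper leaves implicit: the paper gives no written proof of this corollary beyond the phrase ``obtained as a corollary of Lemma \ref{lem: equivalence conditions on isotropy},'' and the intended argument is precisely what you carry out --- diagonalize via Lemma \ref{lem: diagonalization over F} (complete splitting makes $K = K_2 = k_v$), invoke Lemma \ref{lem: equivalence conditions on isotropy} over $k_v$ to get an anisotropic residual binary form $C_1$, and then perturb $\pi$ near $\alpha_1$. Your local bookkeeping is also sound: for $\pi$ close to $\alpha_1$ each diagonal entry of $C_1'$ lies in the same square class as the corresponding entry of $C_1$ (since $1 + \mathfrak{p}^N \subset (k_v^\times)^2$ for $N$ large), so $C_1' \cong C_1$ is anisotropic; the value set of an anisotropic binary form is a single coset of the norm group $N_{L/k_v}(L^\times)$, a union of square classes; and $\pi = \alpha_1 + u\varpi^m$ realizes any square class for $d_1(\pi - \alpha_1)$ at arbitrarily small absolute value. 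This refutes the stated orientation of ($C$3) correctly, and your handling of ($C$1) and ($C$2) is fine.

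The genuine gap is in your one-sentence dismissal of the reversed mixed case, where you assert that $\det(A_2 x - B_2 y)$ ``splits completely over $k_v$ as well, being a scalar multiple of $\det(A_1 x - B_1 y)$.'' That proportionality is not a hypothesis of the corollary and is not available at this point in the paper: it is exactly the conclusion of Proposition \ref{prop:same det(Ax+By)}, whose proof comes \emph{later} and relies on this corollary and on the ($C$1)--($C$3) machinery. Invoking it here is circular. Note that the difficulty is real and not merely cosmetic: the corollary's hypothesis is asymmetric (only the splitting of $\det(A_1 x - B_1 y)$ is assumed), your perturbation argument cannot be run on the second pencil without knowing where its singular members lie, and Lemma \ref{lem: equivalence conditions on isotropy} applied to $(A_2, B_2)$ over $k_v$ only yields residual forms anisotropic over the fields $K_i$ attached to the roots of $\det(A_2 x - B_2 y)$, which need not lie in $k_v$. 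To repair the reversed case without circularity you would need a separate argument --- for instance, observing that a common zero of $(A_1, B_1)$ over $k_v$ makes \emph{every} member $A_1\pi - B_1$ isotropic, transferring isotropy of nondegenerate members across the equality $q_k(A_1\pi - B_1) = q_k(A_2\pi - B_2)$ for the dense set $\pi \in k$ via the closure facts preceding ($C$1), and then still ruling out an anisotropic $(A_2, B_2)$ all of whose $k_v$-rational pencil members are isotropic --- or else strengthen the hypothesis to assume both determinants split at $v$ (which is what actually holds in the paper's applications, e.g.\ in Lemma \ref{lem:automorphism of (A, B) with V}, where $(A_2, B_2) = (I, V)\cdot(A_1, B_1)$ has the same splitting behavior by construction). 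Ironically, the step you flagged as ``the only delicate point'' is the one you got right; the soft spot is the index swap.
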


For understanding of the following sections, we need to recall that 
a one-to-one correspondence between the elements of $V_\IntegerRing := {\rm Sym}^2 (\IntegerRing^3)^* \otimes \IntegerRing^2$ 
and all pairs of a quartic ring and its resolvent ring was recently proved in \cite{Bhargava2004}.
The result was generalized to the case of any base scheme $S$ \cite{Wood2011}.
Herein, 
the generalization for Dedekind domains of \cite{O'Dorney2016} is adopted.

In the following of this section, as in \cite{O'Dorney2016}, $R$ is always a \textit{Dedekind domain}, \IE a Noetherian, integrally closed integral domain
that has the property that every nonzero prime ideal is maximal. Therefore, any fields are also a Dedekind domain.
A finitely generated, torsion-free $R$-module
is called a \textit{lattice over $R$.}
If $M$ is a lattice over $R$ and $k$ is the field of fractions of $R$,
the {\textit rank} of $M$ is defined as the dimension of $M \otimes_R k$ over $k$.
A unitary commutative associative $R$-algebra is called a \textit{quartic ring} (\textit{resp. cubic ring}), if it has rank 4 (\textit{resp. 3})
as a lattice over $R$.
When we put $L := Q / R$, $M := C/R$ and $L^* := {\rm Hom}(L, R)$,
a \textit{quadratic map} means an element of ${\rm Sym}^2 L^* \otimes_R M$.

\begin{defin}
Let $R$ be a Dedekind domain.
For any quartic ring $Q$, 
its \textit{cubic resolvent ring} $C$ (also called numerical resolvent in \cite{O'Dorney2016}) is defined as the $R$-algebra with the following properties:
\begin{itemize}
	\item It is equipped with an $R$-module isomorphism $\theta: \Lambda^2 (C/R) \rightarrow \Lambda^3 (Q/R)$ and a quadratic map $\phi: Q/R \rightarrow C/R$ such that
		\begin{eqnarray}\label{eq: eq for Q}
		 	x \wedge y \wedge x y = \theta(\phi(x) \wedge \phi(y))  \text{ for any } x, y \in Q/R.
		\end{eqnarray}

	\item The multiplicative structure of $C$ is determined by 
		\begin{eqnarray}\label{eq: eq for C}
			x \wedge x^2 \wedge x^3 = \theta(\phi(x) \wedge \phi(x)^2) \text{ for any } x \in Q/R,
		\end{eqnarray}
		where $\phi(x)^2$ is the square $\tilde{\phi}(x)^2$ of any lift $\tilde{\phi} : Q \rightarrow C$ of $\phi$.

\end{itemize}

The quadratic map $\phi$ is called the \textit{resolvent map of $(Q, C)$}.
\end{defin}

If $R$ is a field, and $Q = R[\alpha]$ \IE the quartic extension by $\alpha$, the \textit{classical resolvent map} from $Q$ to $R$ is defined by $\alpha \mapsto \alpha \alpha^\prime + \alpha^{\prime\prime} \alpha^{\prime\prime\prime}$, 
by using the conjugates $\alpha^\prime$, $\alpha^{\prime\prime}$, $\alpha^{\prime\prime\prime}$ of $\alpha$ over $R$.
The following explains a generalization of the Bhargava correspondence for quartic rings 
to the case over a Dedekind domain:

\begin{thm}[Theorem 1.4 \cite{O'Dorney2016}]
	Let $R$ be a Dedekind domain. There is a canonical bijection between 
	\begin{enumerate}[(i)]
	\item Isomorphism classes of pairs $(Q, C)$, where $Q$ is a quartic ring
	and $C$ is a cubic resolvent ring of $Q$.

	\item Quadruples $(L, M, \theta, \phi)$, where $L$ and $M$ are lattices of ranks $3$ and $2$ over $R$, respectively,
	$\theta: \Lambda^2 M \rightarrow \Lambda^3 L$ is an isomorphism and $\phi: L \rightarrow M$ is a quadratic map.
	\end{enumerate}
	Under this bijection, the identifications $Q/R \cong L$ and $C/R \cong M$ are obtained.
Any quartic ring $Q$ has a cubic resolvent, and if $Q$ is Dedekind, the resolvent is unique.
\end{thm}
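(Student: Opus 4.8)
The plan is to construct explicit maps in both directions between the isomorphism classes in (i) and the quadruples in (ii), and then verify that they are mutually inverse. The reverse direction is the easier one: given a pair $(Q, C)$, I would set $L := Q/R$ and $M := C/R$, take $\theta$ to be the isomorphism $\Lambda^2 M \rightarrow \Lambda^3 L$, and take $\phi$ to be the resolvent map supplied by the defining data of $C$ as a cubic resolvent of $Q$. The content here is to verify that these assignments are well-defined and that the isomorphism class of $(L, M, \theta, \phi)$ depends only on that of $(Q, C)$, which follows from the functoriality of the wedge powers together with Equations (\ref{eq: eq for Q})--(\ref{eq: eq for C}).

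The forward direction carries the real weight. Given $(L, M, \theta, \phi)$ I would build $Q := R \oplus L$ and $C := R \oplus M$ as $R$-modules, with $1 \in R$ serving as the identity in each, and then define the multiplications so that Equations (\ref{eq: eq for Q})--(\ref{eq: eq for C}) hold. The subtlety is that (\ref{eq: eq for Q}) alone only pins down the product $xy \bmod R$ after wedging against $x \wedge y$, hence only up to the span of $x$ and $y$; the remaining ambiguity, together with the $R$-components of all products, is fixed by imposing that $Q$ and $C$ be commutative, associative, unital $R$-algebras with $\phi$ as resolvent map. Concretely, I would reduce to the case where $L$ and $M$ are free by localizing: since $R$ is Dedekind, each localization $R_{\mathfrak p}$ is a discrete valuation ring, over which every lattice is free, so after choosing bases $\phi$ becomes a pair of ternary quadratic forms and the construction specializes to Bhargava's explicit structure constants.

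The main obstacle is \emph{associativity} of the multiplication so defined. Over the free local rings this is exactly the classical fact, established in \cite{Bhargava2004} and generalized to arbitrary base schemes in \cite{Wood2011}, that the structure constants attached to a pair of ternary quadratic forms satisfy the finitely many polynomial associativity identities; I would invoke that computation rather than redo it. The genuinely new point over a Dedekind domain is \emph{globalization}: because the construction is canonical and commutes with localization, the locally defined multiplications agree on overlaps and glue to a global $R$-algebra structure on $R \oplus L$, and likewise for $R \oplus M$. Here one uses that a rank-$n$ projective module over a Dedekind domain splits as a free module of rank $n-1$ plus an ideal, so the local bases can be compared through transition data of unit determinant, under which the structure constants transform compatibly.

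Finally, I would verify that the two constructions are inverse and settle the last assertions. That the composite (ii)$\to$(i)$\to$(ii) is the identity is immediate from the defining Equations (\ref{eq: eq for Q})--(\ref{eq: eq for C}); the other composite requires checking that every $(Q, C)$ arises from its extracted quadruple, which again reduces locally to Bhargava's bijection. For existence of a resolvent for an arbitrary quartic ring $Q$, I would produce one locally and glue as above. For \emph{uniqueness} when $Q$ is Dedekind, the point is that a Dedekind quartic ring is a nondegenerate lattice, so the resolvent map $\phi$ has no degenerate directions forcing extra freedom, and the quadratic map, hence $C$, is determined by $Q$ alone. I expect associativity to remain the crux, with the gluing argument the principal additional ingredient demanded by the Dedekind setting.
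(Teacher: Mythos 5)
This statement is imported verbatim as Theorem 1.4 of the cited work of O'Dorney (2016); the paper gives no proof of it, only the explicit recipe that realizes the bijection (the structure constants $c_{ij}^k$ determined by (\ref{eq: multiplicative structure of Q}), the normalization (\ref{eq: normalization}), and the cubic-ring multiplication (\ref{eq: multiplicative structure of R})). So there is no internal argument to compare yours against; what can be judged is whether your sketch is a viable reconstruction of the cited result.

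Your outline does follow the strategy of the source: extract $(L,M,\theta,\phi)$ from $(Q,C)$ in the easy direction, and in the hard direction localize at each prime so the lattices become free, invoke Bhargava's structure-constant computation (associativity included) over the local rings, and globalize. Two steps, however, are asserted where the real work lies. First, the gluing: for a non-free lattice $L\cong{\mathfrak a}_1\bar{\xi}_1\oplus{\mathfrak a}_2\bar{\xi}_2\oplus{\mathfrak a}_3\bar{\xi}_3$ the transition matrices between local bases do \emph{not} have unit determinant over $R$, so ``the structure constants transform compatibly'' is not automatic; one must verify that the $c_{ij}^k$ produced by the local formulas lie in the correct fractional ideals of $R$ (determined by the ${\mathfrak a}_i$ and ${\mathfrak b}_j$) so that the multiplication is defined on the global lattice $R\oplus L$ and not merely on each localization. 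This is precisely the content that distinguishes the Dedekind-domain statement from the free case, and your sketch does not supply it. Second, uniqueness of the cubic resolvent when $Q$ is Dedekind is a genuinely nontrivial assertion --- general quartic rings can admit several resolvents --- and ``the resolvent map has no degenerate directions, so $C$ is determined'' is a restatement of the claim rather than an argument; the known proofs proceed through a discriminant or content analysis of $\phi$. If, like the paper, you intend to cite the theorem, none of this matters; if you intend to prove it, these are the two places where the sketch would have to become a proof.
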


In what follows, we explain how the pair of $Q$ and $R$ in (i), is associated with  $(L, M, \theta, \phi)$ in (ii). 

Since $R$ is a Dedekind domain,
the lattice $Q/R$ is isomorphic to ${\mathfrak a}_1 \bar{\xi}_1 \oplus {\mathfrak a}_2 \bar{\xi}_2
\oplus {\mathfrak a}_3 \bar{\xi}_3$
for some $\bar{\xi}_1, \bar{\xi}_2, \bar{\xi}_3 \in Q/R$ and ideals ${\mathfrak a}_1, {\mathfrak a}_2, {\mathfrak a}_3$ of $R$.
Similarly, $C/R$ is isomorphic to ${\mathfrak b}_1 \bar{\omega}_1 \oplus {\mathfrak b}_2 \bar{\omega}_2$
for some $\bar{\omega}_1, \bar{\omega}_2 \in C/R$ and ideals ${\mathfrak b}_1, {\mathfrak b}_2$ of $R$.
If these ${\mathfrak a}_i$, $\bar{\xi}_i$, ${\mathfrak b}_j$, $\bar{\omega}_j$ are fixed, 
the quadratic map $\phi : Q/R \rightarrow C/R$ 
is uniquely associated with a pair of ternary quadratic forms $(A, B)$ over the fraction field $k$,
by 
\begin{eqnarray}\label{eq: relation between phi and (A, B)}
\phi(x_1 \bar{\xi}_1 + x_2 \bar{\xi}_2 + x_3 \bar{\xi}_3) = B(x_1, x_2, x_3) \bar{\omega}_1 + A(x_1, x_2, x_3) \bar{\omega}_2.
\end{eqnarray}

If we put $L := Q/R$, $Q$ is isomorphic to $R \oplus L$ as an $R$-module.
Let $\xi_i \in Q$ ($1 \leq i \leq 3$) be the element corresponding to $\bar{\xi}_i$ in $Q / R$.
We first note that Eq.(\ref{eq: eq for Q}) implies
		\begin{eqnarray*}
			\left( \sum_{i=1}^3 x_i \bar{\xi}_i \right) \wedge \left( \sum_{i=1}^3 y_i \bar{\xi}_i \right)
			 \wedge \left( \sum_{i=1}^3 \sum_{j=1}^3 x_i y_j \bar{\xi}_i \bar{\xi}_j \right)
 &=& \theta\left( \phi \left( \sum_{i=1}^3 x_i \bar{\xi}_i \right) \wedge \phi \left( \sum_{i=1}^3 y_i \bar{\xi}_i \right) \right).
		\end{eqnarray*}
The multiplicative structure of $Q$, \IE all $c_{ij}^k \in R$ of the equalities $\xi_i \xi_j = c_{ij}^0 + \sum_{i=1}^3 c_{ij}^k \xi_k$, is determined from Eq.(\ref{eq: eq for Q}) as follows;
we denote the coefficients of $A$ and $B$ by $A(x_1, x_2, x_3) = \sum_{1 \leq i \leq j \leq 3} a_{ij} x_i x_j$,
$B(x_1, x_2, x_3) = \sum_{1 \leq i \leq j \leq 3} b_{ij} x_i x_j$.
We then have
		\begin{eqnarray*}
			(\xi_1 \wedge \xi_2 \wedge \xi_3)
			\begin{vmatrix}			
				x_1 & y_1 & \sum_{i, j=1}^3 c_{ij}^1 x_i y_j \\
				x_2 & y_2 & \sum_{i, j=1}^3 c_{ij}^2 x_i y_j \\
				x_3 & y_3 & \sum_{i, j=1}^3 c_{ij}^3 x_i y_j \\
			\end{vmatrix}
			=
-
\theta\left( \bar{\omega}_1 \wedge \bar{\omega}_2 \right)
\sum_{1 \leq i \leq j \leq 3} 
\sum_{1 \leq k \leq l \leq 3} x_i x_j y_k y_l 
			\begin{vmatrix}
				a_{ij} & b_{ij} \\
				a_{kl} & b_{kl} \\
			\end{vmatrix}.
		\end{eqnarray*}

Replacing each $\xi_i$ by $-\xi_i$ if necessary, 
we can fix the sign as follows:
		\begin{eqnarray*}
			\begin{vmatrix}			
				x_1 & y_1 & \sum_{i, j=1}^3 c_{ij}^1 x_i y_i \\
				x_2 & y_2 & \sum_{i, j=1}^3 c_{ij}^2 x_i y_i \\
				x_3 & y_3 & \sum_{i, j=1}^3 c_{ij}^3 x_i y_i \\
			\end{vmatrix}
			= -
\sum_{1 \leq i \leq j \leq 3} 
\sum_{1 \leq k \leq l \leq 3} x_i x_j y_k y_l 
			\begin{vmatrix}
				a_{ij} & b_{ij} \\
				a_{kl} & b_{kl} \\
			\end{vmatrix}.
		\end{eqnarray*}

Comparing the coefficients of each term, the following equations are obtained: 
\begin{eqnarray}\label{eq: multiplicative structure of Q}
c^j_{ii} &=& \epsilon \lambda^{ii}_{ik}, \nonumber \\
c^k_{ij} &=& \epsilon \lambda^{jj}_{ii}, \nonumber \\
c^j_{ij} - c^k_{ik} &=& \epsilon \lambda^{jk}_{ii}, \\
c^i_{ii} - c^j_{ij} - c^k_{ik} &=& \epsilon \lambda^{ij}_{ik},\ \nonumber 
\lambda^{ij}_{kl} := \begin{vmatrix} a_{ij} & b_{ij} \\ a_{kl} & b_{kl} \end{vmatrix},
\end{eqnarray}
where $(i, j, k)$ denotes any permutation of $(1, 2, 3)$ and $\epsilon = \pm 1$ is its sign.
The $c^0_{ij}$ are determined from the associative law of $Q$.
Consequently, all  $c_{ij}^k \in R$, are uniquely determined, up to the transformations given by 
$c^j_{ij} \mapsto c^j_{ij} + a$, $c^k_{ij} \mapsto c^k_{ij} + a$, $c^i_{ii} \mapsto c^i_{ii} + 2a$ ($a \in Q$)
which corresponds to the replacement of $\xi_i$ by $\xi_i + a$.
In order to uniquely determine $c_{ij}^k$, the following constraints were applied in \cite{Bhargava2004}:
\begin{eqnarray}\label{eq: normalization}
	c^1_{12} = c^2_{12} = c^1_{13} = 0.
\end{eqnarray}

If we put $f_{det}(x, y) := 4 \det(A x - B y)$, 
the following equation holds for any $x = \sum_{i=1}^3 x_i \bar{\xi}_i \in Q/R$.
\begin{eqnarray}\label{eq: formula for 4 det(B(x) A - A(x) B}
	x \wedge x^2 \wedge x^3 = f_{det}(B(x_1, x_2, x_3), A(x_1, x_2, x_3)) \bar{\xi}_1 \wedge \bar{\xi}_2 \wedge \bar{\xi}_3.
\end{eqnarray}

Hence, the ring structure of $C$ determined by Eq.(\ref{eq: eq for C}), is same as that provided by $4 \det(A x - B y)$ under the Delone-Faddeev-Gan-Gross-Savin correspondence (\cite{Delone64}, \cite{Gan2002}).
If the coefficients are denoted by $4 \det(A x - B y) = a x^3 + b x^2 y + c x y^2 + d y^3$,
the basis $\langle \bar{\omega}_1, \bar{\omega}_2 \rangle$ of $C/R$ as an $R$-algebra is lifted to a basis $\langle 1, \omega_1, \omega_2 \rangle$ of $C$ that satisfies:
\begin{eqnarray}\label{eq: multiplicative structure of R}
	\omega_1^2 &=& -ac + b \omega_1 - a \omega_2, \nonumber \\ 
	\omega_1 \omega_2 &=& -ad, \\
	\omega_2^2 &=& -bd + d \omega_1 - c \omega_2. \nonumber 
\end{eqnarray}


\section{A canonical form for elements of $V_k$}
\label{A canonical form for elements of $V_k$}

Herein, the method to obtain a \textit{canonical form} of $(A, B) \in V_k$ is discussed, assuming that ${\rm char}\ k \neq 2, 3$.
\begin{lem}\label{lem:generator of Q otimes RationalField}
For any $h_0 \in k$ and $h := (h_{1}, h_{2}, h_{3}) \in k^3$,
if we put $\alpha := h_0 + \sum_{j=1}^3 h_j \xi_{j} \in \QA{k}{A}{B}$, the following holds:
\begin{eqnarray*}
1, \alpha, \alpha^2, \alpha^3 \textit{ is a basis of } \QA{k}{A}{B} \text{ over } k
\Longleftrightarrow
\det(B(h) A - A(h) B) \neq 0.
\end{eqnarray*}
When $(A, B)$ is non-singular and anisotropic over $k$,
$\QA{k}{A}{B}$ contains such an $\alpha$
if and only if $A, B$ are linearly independent over $k$. 
\end{lem}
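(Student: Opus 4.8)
The plan is to deduce both assertions from the wedge-product identity (\ref{eq: formula for 4 det(B(x) A - A(x) B}), which already evaluates the relevant $3$-form in $\Lambda^3(\QA{k}{A}{B}/k)$. First I would note that whether $1, \alpha, \alpha^2, \alpha^3$ is a basis is insensitive to $h_0$: replacing $\alpha$ by $\alpha + c$ ($c \in k$) only performs a unipotent change of basis on the flag $\langle 1\rangle \subset \langle 1, \alpha\rangle \subset \cdots$, so I may take $h_0 = 0$ and work with the image $x := \sum_j h_j \bar\xi_j \in \QA{k}{A}{B}/k$. Since $\QA{k}{A}{B} = k\cdot 1 \oplus (\QA{k}{A}{B}/k)$ as a $k$-vector space and $\Lambda^3(\QA{k}{A}{B}/k)$ is one-dimensional, the map $\omega \mapsto 1 \wedge \omega$ is an isomorphism onto $\Lambda^4 \QA{k}{A}{B}$. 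Wedging with $1$ annihilates the constant parts, so $1 \wedge \alpha \wedge \alpha^2 \wedge \alpha^3 = 1 \wedge (x \wedge x^2 \wedge x^3)$, where the classes $x^2, x^3$ are the images of $\alpha^2, \alpha^3$; these coincide with the lift-independent classes used in (\ref{eq: formula for 4 det(B(x) A - A(x) B}), because $\alpha$ is itself a lift of $x$.

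Now (\ref{eq: formula for 4 det(B(x) A - A(x) B}) reads $x \wedge x^2 \wedge x^3 = 4\det(B(h)A - A(h)B)\,\bar\xi_1 \wedge \bar\xi_2 \wedge \bar\xi_3$. As $\dim_k \QA{k}{A}{B} = 4$, the tuple $1, \alpha, \alpha^2, \alpha^3$ is a basis iff $1 \wedge \alpha \wedge \alpha^2 \wedge \alpha^3 \neq 0$, and by the previous paragraph this holds iff $4\det(B(h)A - A(h)B) \neq 0$; since $\mathrm{char}\,k \neq 2$, this is exactly $\det(B(h)A - A(h)B) \neq 0$, giving the first equivalence.

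For the second assertion the easy direction is immediate: if $A, B$ are linearly dependent then $B(h)A - A(h)B$ vanishes identically in $h$ (for $B = \lambda A$ one gets $\lambda A(h)A - A(h)\lambda A = 0$, and the cases $A = 0$ or $B = 0$ are clear), so by the first equivalence no $\alpha$ yields a basis. The substance is the converse: assuming $A, B$ linearly independent, I must produce $h \in k^3$ with $\det(B(h)A - A(h)B) \neq 0$. I would first show that the polynomial $h \mapsto \det(B(h)A - A(h)B)$ is not identically zero. Since $(A,B)$ is non-singular, $\det(Ax - By)$ is a nonzero binary cubic; applying a suitable $v \in GL_2(k)$ to the pencil — which changes $\QA{k}{A}{B}$ only up to isomorphism, hence preserves the existence of a monogenic generator — I may assume $\det A \neq 0$. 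Anisotropy then lets me invoke Lemma \ref{lem: diagonalization over F} (recall $K_2 = k$ for $n = 3$) to diagonalize over the splitting field $K$ as $A = [d_1, d_2, d_3]$, $B = [d_1\alpha_1, d_2\alpha_2, d_3\alpha_3]$ with all $d_i \neq 0$. A direct computation then gives $\det(B(h)A - A(h)B) = d_1 d_2 d_3 \prod_{i=1}^3 (B(h) - \alpha_i A(h))$, whose $i$-th factor is the form $\sum_{j \neq i} d_j(\alpha_j - \alpha_i) h_j^2$. This factor vanishes identically precisely when both other roots equal $\alpha_i$, i.e. when all three roots coincide, which would force $B = \alpha_1 A$ and hence linear dependence. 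So linear independence makes every factor a nonzero form, and since $k[h_1, h_2, h_3]$ is a domain the product is a nonzero polynomial.

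The main obstacle is the final passage from a nonzero polynomial to an actual $k$-rational witness $h$. For the fields relevant here — $\RationalField$, $\RealField$, and the completions $\RationalField_p$, all infinite — a nonzero polynomial cannot vanish on all of $k^3$, so such an $h$ exists and, with any $h_0$, yields the required $\alpha \in \QA{k}{A}{B}$. As a self-contained alternative that sidesteps both the $GL_2$-reduction and the diagonalization, one can argue geometrically: choosing $(s_0 : t_0)$ with $\det(A s_0 - B t_0) \neq 0$, the conic $t_0 B - s_0 A$ has nonzero determinant, hence is smooth and meets the finite base locus $\{A = B = 0\}$ in no further points, so any point on it off the base locus witnesses $\det(B(h)A - A(h)B) \neq 0$ over $\bar k$, again showing the polynomial is nonzero.
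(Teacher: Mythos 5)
Your proof is correct, and while the first equivalence is established exactly as in the paper --- both arguments reduce the basis condition to the non-vanishing of $4\det(B(h)A - A(h)B)$ via the identity (\ref{eq: formula for 4 det(B(x) A - A(x) B}), whether phrased through the transition matrix $M$ or through $1\wedge\alpha\wedge\alpha^2\wedge\alpha^3$ --- your treatment of the second assertion departs from the paper's in both directions. For the ``only if'' part the paper invokes the explicit ring structure $\QA{k}{A}{B}\cong k[x,y,z]/(x^2,y^2,z^2,xy,yz,xz)$, whereas you deduce it directly from the first equivalence by observing that $B(h)A-A(h)B$ vanishes identically when $A,B$ are dependent; this is leaner and avoids any appeal to the multiplication table. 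For the ``if'' part the paper runs a case analysis on the set $\Pi$ of $k$-rational roots of $\det(Ax-By)=0$ (no roots, one root, all roots rational), leaving the last case as ``easily seen''; you instead diagonalize over the full splitting field $K$ via Lemma \ref{lem: diagonalization over F} and obtain the uniform factorization $\det(B(h)A-A(h)B)=d_1d_2d_3\prod_{i}\bigl(\sum_{j\neq i}d_j(\alpha_j-\alpha_i)h_j^2\bigr)$, from which non-vanishing of the polynomial follows at once from linear independence. Your route is more uniform and makes the hardest case of the paper's argument explicit, but it buys this at the price of the final step ``nonzero polynomial $\Rightarrow$ nonzero value'', which needs $k$ infinite (a degree-$6$ polynomial can vanish identically on $k^3$ for a small finite $k$ of characteristic $\geq 5$); you flag this honestly, and it is harmless for every field the paper actually uses ($\RationalField$, $\RealField$, $\RationalField_p$), while the paper's case-by-case argument only ever needs a single nonzero quadratic form to take a nonzero value. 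Your closing geometric remark (a smooth conic in the pencil off the finite base locus) is a nice independent confirmation that the polynomial is nonzero, though it too only produces a $\bar{k}$-point and so does not remove the infinitude assumption.
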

\begin{proof}
For any element $\alpha \in \QA{k}{A}{B}$, 
we denote the image of $\alpha$ by the natural epimorphism $\QA{k}{A}{B} \twoheadrightarrow \QA{k}{A}{B} / k \cdot 1$ by $\overline{\alpha}$.
By Eq.(\ref{eq: formula for 4 det(B(x) A - A(x) B}), the following matrix $M$ satisfies $\det M = 4 \det(B(h) A - A(h) B)$:
\begin{eqnarray}
	M
	\begin{pmatrix}
		\overline{\xi_1} \\ \overline{\xi_2} \\ \overline{\xi_3} 
	\end{pmatrix}
 =
	\begin{pmatrix}
		\overline{\alpha} \\ \overline{\alpha^2} \\ \overline{\alpha^3} 
	\end{pmatrix}.
\end{eqnarray}
The first statement is obtained from this.
With regard to the second one, 
the only-if part immediately follows from the fact that $\QA{k}{A}{B}$
is isomorphic to $k[x, y, z] / (x^2, y^2, z^2, xy, yz, xz)$, if $A, B$ are linearly dependent.
We now prove the if part.
If $\det(A x - B y) = 0$ does not have roots $[x : y] \in {\mathbb P}^1(k)$, then 
$\det(B(h) A - A(h) B) = 0 \Leftrightarrow A(h) = B(h) = 0$.
However, none of $0 \neq h \in k^3$ satisfies this, owing to the anisotropy of $(A, B)$.
If $\emptyset \ne \Pi \subset {\mathbb P}^1(k)$ is the set of all the roots of $\det(A x - B y) = 0$,
then, 
\begin{eqnarray*}\label{eq:condition det(B(h) A - A(h) B) = 0}
	\det(B(h) A - A(h) B) = 0 \Longleftrightarrow u A (h) = v B (h) \text{ for some } [u : v] \in \Pi.
\end{eqnarray*}
Since $u A - v B$ is a non-zero quadratic form, 
if the cardinality of $\Pi$ is 1,
some $h \in k^3$ does not satisfy $u A (h) = v B (h)$.
Otherwise, all roots of $\det(A x - B y) = 0$ belong to ${\mathbb P}^1(k)$,
hence, $(A, B)$ is simultaneously diagonalized over $k$ by Lemma \ref{lem: diagonalization over F}.
Thus, it is easily seen that this case is eliminated as well.
\end{proof}

In the following, for fixed $(A, B) \in V_k$, we will take $\alpha$ as in Lemma \ref{lem:generator of Q otimes RationalField}
and put $a x^3 + b x^2 y + c x y^2 + d y^3 := 4 \det (A x - B y)$.
Let ${\rm ch}_\alpha(x) := x^4 + a_{3} x^3 + a_{2} x^2 + a_{1} x + a_{0}$ be the characteristic polynomial of $\alpha \in \QA{k}{A}{B}$. 
From Eq.(\ref{eq: multiplicative structure of Q}), these $a_i$ ($0 \leq i \leq 3$) are represented as an integral polynomial of $h_i$ ($0 \leq i \leq 3$) and the coefficients of $A$, $B$.

It was pointed out in \cite{Bhargava2004} that the image of $\alpha \in Q_\IntegerRing(A, B)$ by the resolvent map equals $z + B(h) \omega_1 + A(h) \omega_2$ for some $z \in \IntegerRing$.
In general, it can be verified by direct calculation that the following equality holds as a polynomial of $h_i$ and the coefficients of $A$, $B$.
\begin{eqnarray}\label{eq: ch_beta^{res}(x)}
{\rm ch}_\alpha^{res}(x) 
&=& (x-z)^3 + ( c A(h) - b B(h) ) (x-z)^2 \nonumber \\
& & + \{ b d A(h)^2 + (3 a d - b c) A(h) B(h)  + a c B(h)^2 \} (x-z) \nonumber \\
& & + a d^2 A(h)^3 - (b^2 d - 2 a c d) A(h)^2 B(h) 
+ (a c^2 - 2 a b d) A(h) B(h)^2 - a^2 d B(h)^3 \nonumber \\
&=& \prod_{i=1}^3 \left( x - z + a B(h) \frac{u^{(i)}}{v^{(i)}} - d A(h) \frac{v^{(i)}}{u^{(i)}} \right), \nonumber \\
\end{eqnarray}
where $[u^{(i)} : v^{(i)}] \in {\mathbb P}^1(\bar{k})$ ($1 \leq i \leq 3$)
are the roots of $4 \det (A x - B y) = 0$. 
The $z$ equals $(a_2 + c A(h) - b B(h)) / 3$, which is obtained by comparing the coefficients of $x^2$.

\begin{lem}\label{prop: monogenic ring}
We assume that $(A, B) \in V_k$ is linearly independent, non-singular and anisotropic over $k$.  
As in Lemma \ref{lem:generator of Q otimes RationalField}, $\alpha \in \QA{k}{A}{B}$ and $h \in k^3$ that satisfies $\det( B(h) A - A(h) B) \neq 0$ are fixed.
When ${\rm ch}_\alpha(x) = x^4 + a_{3} x^3 + a_{2} x^2 + a_{1} x + a_{0}$ is the characteristic polynomial of $\alpha$,
let $\Lambda := \langle 1, \alpha, \alpha^2 + a_{3} \alpha + a_{2}, \alpha^3 + a_{3} \alpha^2 + a_{2} \alpha + a_{1} \rangle$
be a basis of $k[\alpha]$.
Let $(R, \Phi)$ be the cubic $k$-algebra corresponding to the resolvent polynomial ${\rm ch}_\alpha^{res}(x+a_{2}/3)$.
Then, $(k[\alpha], \Lambda)$ and $(R, \Phi)$ coincide
with the quartic ring $\QA{k}{\tilde{A}}{\tilde{B}}$ and the resolvent cubic $\RA{k}{\tilde{A}}{\tilde{B}}$ of the following $(\tilde{A}, \tilde{B})$.
\begin{eqnarray}\label{eq: definition of tilde{A} and tilde{B}}
\tilde{A} = 
	\begin{pmatrix}
		   0 &   0 & 1/2 \\
		   0 & -1 &     0 \\
		1/2 &  0 &     0 
	\end{pmatrix},\
\tilde{B} = -
	\begin{pmatrix}
		       1 & a_{3}/2  & a_{2}/6 \\
      a_{3}/2 & 2a_{2}/3 & a_{1}/2 \\
	 a_{2}/6 & a_{1}/2  & a_{0}
	\end{pmatrix}.
\end{eqnarray}
\end{lem}

\begin{proof}
It is straightforward to verify by direct computation that $\Lambda$ satisfies Eq.(\ref{eq: normalization}), and $(\tilde{A}, \tilde{B})$ corresponds to the pair of $(k[\alpha], \Lambda)$ and $(R, \Phi)$.
\end{proof}

In particular, we have $4 \det \left( \tilde{A} x  - \tilde{B} \right) = {\rm ch}_\alpha^{res}(x + a_2/3)$.
We shall construct $(W, V) \in G_k$
such that 
$(\tilde{A}, \tilde{B}) = (W, V) \cdot (A, B)$ holds;
when $\langle 1, \xi_1, \xi_2, \xi_3 \rangle$ is the basis of $\QA{k}{A}{B}$, let $W$ be the matrix uniquely determined by:
\begin{eqnarray*}\label{eq: definition of W}
	W
	\begin{pmatrix}
		\overline{\xi_1} \\ \overline{\xi_2} \\ \overline{\xi_3} 
	\end{pmatrix}
 =
	\begin{pmatrix}
		1  & 0 & 0 \\
		a_{3} & 1 & 0\\
		a_{2} & a_{3}  & 1
	\end{pmatrix}
	\begin{pmatrix}
		\overline{\alpha} \\ \overline{\alpha^2} \\ \overline{\alpha^3} 
	\end{pmatrix}.
\end{eqnarray*}
The determinant is given by $\det W = 4 \det(B(h) A - A(h) B) \neq 0$.
We define $V \in GL_2(k)$ by
\begin{eqnarray*}\label{eq: definition of V}
V
 &:=& \det W^{-1}
\begin{pmatrix}
 1 & 0  \\
 (c A(h) - b B(h))/3 & 1
\end{pmatrix}
\begin{pmatrix}
 B(h) & -A(h)  \\
 -c A(h) B(h) - d A(h)^2 & -a B(h)^2 - b A(h) B(h)
\end{pmatrix},
\end{eqnarray*}
which is derived from the following equiation obtained by using Eq.(\ref{eq: ch_beta^{res}(x)}) and $4 \det( Ax - B y ) = a \prod_{i=1}^3 (x~-~(u^{(i)} / v^{(i)}) y)$:
\begin{eqnarray*}
& & \hspace{-10mm}
{\rm ch}_\alpha^{res} \left( x + \frac{ a_2 + c A(h) - b B(h) }{3} \right) \\
&=& \frac{ 4 \det(B(h) A - A(h) B) }{ \det W } {\rm ch}_\alpha^{res} \left( x + \frac{ a_2 + c A(h) - b B(h) }{3} \right) \\
 &=& \frac{ a }{ \det W } \prod_{i=1}^3 \left( B(h) - \frac{u^{(i)}}{v^{(i)}} A(h) \right)\left( x + a B(h) \frac{u^{(i)}}{v^{(i)}} - d A(h) \frac{v^{(i)}}{u^{(i)}} \right) \\
 &=& \frac{a}{\det W} \prod_{i=1}^3 \left\{ B(h) x + c A(h) B(h) + d A(h)^2 - \frac{u^{(i)}}{v^{(i)}} (A(h) x - a B(h)^2 - b A(h) B(h)) \right\} \\
 &=& (\det W^{-1})\ 4 \det \left( (B(h) x + c A(h) B(h) + d A(h)^2) A + (-A(h) x + a B(h)^2 + b A(h) B(h)) B \right).
\end{eqnarray*}

As a result of Lemma \ref{prop: monogenic ring}, the following corollary is immediately obtained:
\begin{cor}\label{cor: (A, B) is isotropic over k}
We assume that $(A, B)$ and $\alpha$ are taken as in Lemma \ref{prop: monogenic ring}.
For any field $k \subset K \subset \bar{k}$,
$(A, B)$ is isotropic over $K$
if and only if  ${\rm ch}_\alpha(x) = 0$ has a root in $K$.
\end{cor}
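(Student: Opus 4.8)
The plan is to transport the question from $(A, B)$ to the explicit pair $(\tilde A, \tilde B)$ of Proposition~\ref{prop: monogenic ring}, for which the common zero locus can be read off directly. The starting observation is that isotropy over $K$ is invariant under the whole group action: since $(\tilde A, \tilde B) = (W, V) \cdot (A, B)$ with $(W, V) \in G_k \subseteq G_K$, the $GL_3(k)$-part $W$ is an invertible change of variables, hence carries nonzero common zeros to nonzero common zeros, while the $GL_2(k)$-part $V$ only replaces $(A, B)$ by an invertible pair of linear combinations and therefore leaves the common zero set $\{ x : A(x) = B(x) = 0 \}$ unchanged. Consequently $(A, B)$ is isotropic over $K$ if and only if $(\tilde A, \tilde B)$ is, and it suffices to treat the latter pair.

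Next I would make the zero locus of $\tilde A$ explicit. Reading $\tilde A$ off~(\ref{eq: definition of tilde{A} and tilde{B}}) gives the rank-three form $\tilde A(x) = x_1 x_3 - x_2^2$, whose projective zero set is a smooth conic carrying the $k$-rational point $[1 : 0 : 0]$; it is therefore parametrized isomorphically over $k$, hence over every $K$, by the Veronese-type map $[s : t] \mapsto [s^2 : st : t^2]$, which induces a bijection between ${\mathbb P}^1(K)$ and the $K$-points of the conic. The key computation is the restriction of $\tilde B$ to this parametrized conic. A direct substitution into~(\ref{eq: definition of tilde{A} and tilde{B}}) yields
\begin{eqnarray*}
\tilde B(s^2, st, t^2) = -\left( s^4 + a_3 s^3 t + a_2 s^2 t^2 + a_1 s t^3 + a_0 t^4 \right),
\end{eqnarray*}
which is, up to sign, the homogenization of ${\rm ch}_\alpha(x) = x^4 + a_3 x^3 + a_2 x^2 + a_1 x + a_0$; that is, $\tilde B(s^2, st, t^2) = -t^4 {\rm ch}_\alpha(s/t)$ for $t \neq 0$.

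From this identity the equivalence drops out. A common zero of $(\tilde A, \tilde B)$ over $K$ corresponds to a point $[s : t] \in {\mathbb P}^1(K)$ on the conic at which the binary quartic above vanishes. Since the coefficient of $s^4$ equals $1 \neq 0$, the point $[1 : 0]$, equivalently the conic point $[1 : 0 : 0]$ where $\tilde B = -1$, is never a zero; so every common zero has $t \neq 0$ and thus corresponds to an affine root $s/t \in K$ of ${\rm ch}_\alpha$. Conversely, each root $r \in K$ of ${\rm ch}_\alpha$ produces the common zero $(r^2, r, 1) \in K^3$, as one checks from $\tilde A(r^2, r, 1) = 0$ and $\tilde B(r^2, r, 1) = -{\rm ch}_\alpha(r) = 0$. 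Hence $(\tilde A, \tilde B)$, and therefore $(A, B)$, is isotropic over $K$ precisely when ${\rm ch}_\alpha$ has a root in $K$.

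The delicate points here are bookkeeping rather than conceptual: one must confirm the indices and the sign in the substitution giving $\tilde B(s^2, st, t^2)$, and one must verify that the conic parametrization is a genuine bijection on $K$-points, which is guaranteed by smoothness together with the rational point $[1 : 0 : 0]$, so that no common zero is overlooked and the excluded point $[1 : 0 : 0]$ is correctly seen to satisfy $\tilde B \neq 0$. Everything else is forced by the $G_k$-equivariance already established before the statement.
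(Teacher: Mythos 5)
Your proposal is correct and follows essentially the same route as the paper: replace $(A,B)$ by the canonical pair $(\tilde A,\tilde B)$ using the $G_k$-action, parametrize the conic $\tilde A = x_1x_3 - x_2^2 = 0$ by $[s:t]\mapsto [s^2:st:t^2]$, and observe that $\tilde B(s^2,st,t^2)$ is the negated homogenization of ${\rm ch}_\alpha$. The paper states these two equivalences without elaboration, whereas you supply the details (invariance of isotropy under the group action, surjectivity of the Veronese parametrization on $K$-points, and the exclusion of the point at infinity via the leading coefficient $1$), all of which check out.
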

\begin{proof}
Using the action of $G_k$,
we may replace $(A, B)$ with $(\tilde{A}, \tilde{B})$ in Eq.(\ref{eq: definition of tilde{A} and tilde{B}}).
This is proved as follows:
\begin{eqnarray*}
(\tilde{A}, \tilde{B})({\mathbf x}) = 0 \text{ for some } 0 \neq {\mathbf x} \in K^3
&\Leftrightarrow& (\tilde{A}, \tilde{B})(s^2, st, t^2) = 0 \text{ for some } [s : t] \in {\mathbb P}^1(K) \\
&\Leftrightarrow& {\rm ch}_\alpha(u) = 0 \text{ for some } u \in K.
\end{eqnarray*}
\end{proof}

As proved in Section~\ref{Proof of Theorem ref{thm:main result over RationalField}},
any $(A_1, B_1)$, $(A_2, B_2) \in V_k$ can be simultaneously transformed to pairs of the form of Eq.(\ref{eq: definition of tilde{A} and tilde{B}}),
if $\det (A_1 x - B_1 y) = c \det (A_2 x - B_2 y)$ for some $c \in k^\times$, and some $(q_A, q_B) \in q_k(A_1, B_1) \cap q_k(A_2, B_2)$ satisfies $\det (q_B A_i - q_A B_i) \neq 0$.

%
%
\section{Automorphisms of cubic polynomials}
\label{Automorphisms of cubic polynomial}

The purpose of this section is to prove Lemma \ref{lem:automorphism of (A, B) with V} 
which is used in the proof of Theorem \ref{thm:main result over RationalField}.
The assumption ${\rm char}\ k \neq 2, 3$ is also used herein.

\begin{lem}\label{lem:automorphism of cubic polynomial}
If a cubic polynomial $f (x, y) = a x^3 + b x^2 y + c x y^2 + d y^3 \in k[x, y]$ has 
no multiple roots and satisfies $(\det V)^{-1} f ( (x, y) V ) = u f(x, y)$ for some $V \in GL_2(k)$ and $u \in k$, then 
$V^n = u^n I$ for at least one of $n = 1, 2, 3$.
\end{lem}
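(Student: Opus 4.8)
The plan is to read the hypothesis geometrically. Since $f$ has no multiple roots, it defines three distinct points $P_1, P_2, P_3 \in \mathbb{P}^1(\bar k)$, namely its projective zeros. The relation $(\det V)^{-1} f((x,y)V) = u f(x,y)$ says that the substitution $(x,y) \mapsto (x,y)V$ carries $f$ to a nonzero scalar multiple of itself, so in particular it preserves the zero locus of $f$. Hence the projective-linear transformation induced by $V$ permutes the set $\{P_1, P_2, P_3\}$, giving a permutation $\sigma \in S_3$ (well-defined on these geometric points, as $V$ has entries in $k$). Let $n$ be the order of $\sigma$; then $n \in \{1,2,3\}$, since every element of $S_3$ has order $1$, $2$, or $3$.

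The next step is to show that $V^n$ is scalar. Iterating the functional equation gives $f((x,y)V^m) = (u \det V)^m f(x,y)$ for all $m \geq 0$, so each power $V^m$ again preserves the roots and induces $\sigma^m$. Taking $m = n$, the transformation $V^n$ fixes every $P_i$ projectively, i.e. each $P_i$ is an eigendirection of $V^n$. A $2 \times 2$ matrix possessing three distinct eigendirections in $\mathbb{P}^1(\bar k)$ must be a scalar matrix; therefore $V^n = \lambda I$ for some $\lambda \in k^\times$ (the entries lie in $k$ because $V \in GL_2(k)$).

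Finally I would identify the scalar $\lambda$. On one hand, $(x,y)V^n = \lambda(x,y)$ together with homogeneity of degree $3$ gives $f((x,y)V^n) = \lambda^3 f(x,y)$. On the other hand, the iterated relation with $m = n$ gives $f((x,y)V^n) = (u \det V)^n f(x,y)$, so $\lambda^3 = (u \det V)^n$. Comparing determinants yields $\lambda^2 = \det(V^n) = (\det V)^n$. Substituting, $\lambda^3 = u^n (\det V)^n = u^n \lambda^2$, and dividing by $\lambda^2 \neq 0$ gives $\lambda = u^n$. Hence $V^n = u^n I$, as claimed.

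The main obstacle is this last step: the permutation argument by itself only delivers $V^n = \lambda I$ for some a priori unknown scalar, whereas the content of the lemma is precisely that $\lambda$ equals $u^n$ rather than an arbitrary constant. Pinning it down requires combining two independent pieces of data — the determinant, which controls $\lambda^2$, and the degree-$3$ homogeneity fed through the functional equation, which controls $\lambda^3$ — so that their ratio isolates $\lambda$. Everything else reduces to the standard fact that a projective transformation fixing three distinct points is trivial.
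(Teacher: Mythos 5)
Your proof is correct and follows essentially the same route as the paper's: $V$ permutes the three distinct projective roots, so $V^n$ is scalar for some $n\in\{1,2,3\}$, and the scalar is pinned down to $u^n$ by combining the iterated functional equation with the determinant. Your write-up merely makes explicit the steps the paper leaves terse.
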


\begin{proof}
Let $[p_i : q_i] \in {\mathbb P}^1(\bar{k})$ ($i = 1, 2, 3$) be the roots of $f(x, y) = 0$.
$V \in GL_2(k)$ exchanges $[p_i : q_i]$. Hence, $V^n = v I$ for some $n = 1, 2, 3$ and $v \in k$.
From  
$(\det V^n)^{-1} f((x, y)V^n) = v f(x, y) = u^n f(x, y)$,
$v=u^n$ is obtained.
\end{proof}

In what follows, $p_i, q_i$ that satisfy $f(x, y) = \prod_{i=1}^3 (q_i x - p_i y)$ are fixed.
The following examples list all the cases in which $V \neq I$.

\begin{exa}[Case $V^2 = u^2 I$]
It may be assumed that $V$ swaps $[p_1 : q_1]$ and $[p_2 : q_2]$, and fixes $[p_3 : q_3]$.
For some $0 \neq r_1, r_2 \in \bar{k}$,
\begin{eqnarray}
V
&=& u
\begin{pmatrix}
	p_1 & q_1 \\
	p_2 & q_2
\end{pmatrix}^{-1}
\begin{pmatrix}
	0 & r_2 \\
	r_1 & 0
\end{pmatrix}
\begin{pmatrix}
	p_1 & q_1 \\
	p_2 & q_2
\end{pmatrix}.
\end{eqnarray}
From $V^2 = u^2 I$, $r_1 r_2 = 1$ is obtained.
$V \begin{pmatrix} q_3 \\ -p_3 \end{pmatrix} = -u \begin{pmatrix} q_3 \\ -p_3 \end{pmatrix}$ is also obtained 
from $(\det V)^{-1} f((x, y)V) = - u^{-2} f((x, y)V) = u f(x, y)$.
Hence, $r_1 = r_2^{-1} = -(p_2 q_3 - q_2 p_3) / (p_1 q_3 - q_1 p_3)$.
Thus,
\begin{eqnarray*}
& & \hspace{-10mm}
-(p_2 q_3 - q_2 p_3) (p_1 q_3 - q_1 p_3) V \nonumber \\
&=& \frac{u}{ p_1 q_2 - p_2 q_1 }
\begin{pmatrix}
	q_2 &  -q_1 \\
	-p_2 & p_1 
\end{pmatrix} 
\begin{pmatrix}
	0 & (q_1 p_3 - p_1 q_3)^2 \\
	(q_2 p_3 - p_2 q_3)^2 & 0
\end{pmatrix}
\begin{pmatrix}
	p_1 & q_1 \\
	p_2 & q_2
\end{pmatrix}
\nonumber \\
&=& u
\begin{pmatrix}
p_1 p_2 q_3^2 - p_3^2 q_1 q_2
 & (p_1 q_2  + p_2 q_1) q_3^2 - 2 p_3 q_1 q_2 q_3 \\
(p_1 q_2 + p_2 q_1) p_3^2 - 2 p_1 p_2 p_3 q_3
 & -p_1 p_2 q_3^2 + p_3^2 q_1 q_2
\end{pmatrix}.
\end{eqnarray*}
Therefore, if we put 
$C := -1/(p_2 q_3 - q_2 p_3) (p_1 q_3 - q_1 p_3)
	p_3 / \frac{\partial f}{ \partial y }(p_3, q_3)$,
\begin{eqnarray}\label{eq:definition of V}
V
&=&  
u C
\begin{pmatrix}
 b p_3 + c q_3 
 & -3 a p_3 - b q_3 \\
  c p_3 + 3 d q_3 
 & -b p_3 - c q_3
\end{pmatrix}.
\end{eqnarray}
If $[p_3 : q_3] \notin {\mathbb P}^1(k)$,
$V \in GL_2(k)$ implies that 
$3 a c = b^2$ and $3 b d = c^2$, then $f(x, y) = (3b c)^{-1} (b x + c y)^3$. 
Since $f$ is assumed to have no multiple roots, we obtain $[p_3 : q_3] \in {\mathbb P}^1(k)$.
\end{exa}

\begin{exa}[Case $V^3 = u^3 I$]
It may be assumed that 
$V$ maps $[p_i : q_i]$ to $[p_j : q_j]$ for any $(i, j) = (1, 2), (2, 3), (3, 1)$.
Since $V^3 = u^3 I$, for some $c_i \in \bar{k}$ with $c_1 c_2 c_3 = u^3$, we have 
\begin{eqnarray}
\begin{pmatrix}
	p_1 & q_1 \\
	p_2 & q_2 \\
	p_3 & q_3 \\
\end{pmatrix}
V
=
\begin{pmatrix}
0 & c_2 & 0 \\
0 & 0 & c_3 \\
c_1 & 0 & 0 
\end{pmatrix}
\begin{pmatrix}
	p_1 & q_1 \\
	p_2 & q_2 \\
	p_3 & q_3 \\
\end{pmatrix}.
\end{eqnarray}

We have $\det V = c_{i_2} c_{i_3} (p_{i_2} q_{i_3} - p_{i_3} q_{i_2}) / (p_{i_1} q_{i_2} - p_{i_2} q_{i_1})$
for any $(i_1, i_2, i_3) = (1, 2, 3), (2, 3, 1), (3, 1, 2)$. Hence,

\begin{eqnarray}
\begin{pmatrix}
c_1 \\
c_2 
\end{pmatrix}
= 
c_3 \frac{p_3 q_1 - p_1 q_3}{p_1 q_2 - p_2 q_1}
\begin{pmatrix}
\frac{p_2 q_3 - p_3 q_2}{p_1 q_2 - p_2 q_1} \\
\frac{p_3 q_1 - p_1 q_3}{p_2 q_3 - p_3 q_2}
\end{pmatrix}.
\end{eqnarray}

From $c_1 c_2 c_3 = u^3$, 
some $\zeta \in \bar{k}$ with $\zeta^3 = 1$
satisfies $c_3 = u \zeta (p_1 q_2 - p_2 q_1)/(p_3 q_1 - p_1 q_3)$.
Thus, 
\begin{eqnarray}
V
&=& u \zeta
\begin{pmatrix}
	p_1 & q_1 \\
	p_2 & q_2
\end{pmatrix}^{-1}
\begin{pmatrix}
\frac{p_3 q_1 - p_1 q_3}{p_2 q_3 - p_3 q_2} & 0 \\
0 & \frac{p_1 q_2 - p_2 q_1}{p_3 q_1 - p_1 q_3}
\end{pmatrix}
\begin{pmatrix}
	p_2 & q_2 \\
	p_3 & q_3
\end{pmatrix}.
\end{eqnarray}

From $\det V = u^2 \zeta^2 \in k$,
$\zeta \in k$ is obtained. 
Hence, $u$ may be replaced with $u \zeta^{-1}$.
If we put $\Delta = (p_1 q_2 - p_2 q_1)(p_2 q_3 - p_3 q_2)(p_3 q_1 - p_1 q_3)$,
we have ${\rm Disc}(f(x, 1)) = \Delta^2$ and 
\begin{eqnarray*}
\Delta &=& -2 (p_1^2 p_2 q_2 q_3^2 + p_2^2 p_3 q_1^2 q_3 + p_1 p_3^2 q_1 q_2^2) - b c + 3 a d \\
	   &=& 2 (p_1 p_2^2 q_3^2 q_1 + p_3 p_1^2 q_2^2 q_3 + p_2 p_3^2 q_1^2 q_2) + b c - 3 a d, \\
V
&=&
\frac{u}{\Delta}
\begin{pmatrix}
(9 a d - b c - \Delta)/2
&
b^2 - 3 a c \\
-c^2 + 3 b d
&
-(9 a d - b c + \Delta)/2
\end{pmatrix}.
\end{eqnarray*}
Since $V \in GL_2(k)$, we have  
$\Delta \in k$.
Therefore, $f(x, y) = 0$ completely splits over $k$, or 
$k[x]/(f(x, 1))$ is a Galois cubic field over $k$.
\end{exa}

In the following lemma, $4 \det (A x - B y)$ is denoted by $f_{det}(x, y)$.

\begin{lem}\label{lem:automorphism of (A, B) with V}
We assume that $(A, B) \in V_k$ with ${\rm Disc}(A, B) \neq 0$ (hence, linearly independent), is non-singular and anisotropic over $k$. 
We further assume that there are $u \in k$ and a matrix $u I \ne V \in GL_2(k)$
such that $q_k(A, B) = q_k((I, V) \cdot (A, B))$ and 
$$
	({\det V})^{-1} f_{det}((x, y) \tilde{V}) = u f_{det}(x, y), \quad 
\tilde{V} :=
\begin{pmatrix}
	1 & 0 \\
	0 & -1 \\
\end{pmatrix}
V
\begin{pmatrix}
	1 & 0 \\
	0 & -1 \\
\end{pmatrix}.
$$
Then, 
there exists 
$W \in GL_3(k)$ such that $(W, -u^{-1} V) \cdot (A, B) = (A, B)$. 
\end{lem}

\begin{proof}
Using the action of $GL_2(k)$,
we may assume $\det A \neq 0$.
Let $\alpha_1, \alpha_2, \alpha_3 \in \bar{k}$ be the roots of $\det(A x - B) = 0$, and $k_i$ be the field $k(\alpha_i)$.
We fix $0 \neq v_i \in k_i^3$, $d_i \in k_i$ ($1 \leq i \leq 3$)
and $w = \tr{(v_1\ v_2\ v_3)}$
as in the first paragraph in the proof of Lemma \ref{lem: equivalence conditions on isotropy}.
$(A, B)$ is transformed as in Eq.(\ref{eq: transform of (A,B)}).

From $\tilde{V} \ne u I$, 
there are the following two cases:
\begin{itemize}
\item ($\tilde{V}^2 = V^2 = u^2 I$)
It may be assumed that $V$ exchanges $[\alpha_1 : -1]$ and $[\alpha_2 : -1]$.
We then have $k_1 = k_2$ and $\alpha_3 \in k$. Furthermore, 
\begin{eqnarray}
V
&=& u
\begin{pmatrix}
	\alpha_1 & -1 \\
	\alpha_2 & -1
\end{pmatrix}^{-1}
\begin{pmatrix}
	0 & -\frac{\alpha_1 - \alpha_3}{\alpha_2 - \alpha_3} \\
	-\frac{\alpha_2 - \alpha_3}{\alpha_1 - \alpha_3} & 0
\end{pmatrix}
\begin{pmatrix}
	\alpha_1 & -1 \\
	\alpha_2 & -1
\end{pmatrix}.
\end{eqnarray}
Hence,
\begin{footnotesize}
\begin{eqnarray*}
	& & \hspace{-10mm}
	\left( w, 
		\begin{pmatrix}	
			\alpha_1 & -1 \\
			\alpha_2 & -1 
		\end{pmatrix} V
	\right) 
	\cdot
	(A, B) \nonumber \\
	&=&
	\left( I, u
\begin{pmatrix}
	0 & -\frac{\alpha_1 - \alpha_3}{\alpha_2 - \alpha_3} \\
	-\frac{\alpha_2 - \alpha_3}{\alpha_1 - \alpha_3} & 0
\end{pmatrix}
	\right) 
	\cdot
	\left( 
		\begin{pmatrix}	
			0 & 0 & 0 \\
			0 & d_2 (\alpha_1 - \alpha_2) & 0 \\
			0 & 0 & d_3 (\alpha_1 - \alpha_3)
		\end{pmatrix},
		\begin{pmatrix}	
			d_1 (\alpha_2 - \alpha_1) & 0 & 0 \\
			0 & 0 & 0 \\
			0 & 0 & d_3 (\alpha_2 - \alpha_3)
		\end{pmatrix}	
	\right) \\
	&=&
	\left( 
		u
		\begin{pmatrix}	
			\frac{d_1(\alpha_1 - \alpha_3)}{d_2(\alpha_2 - \alpha_3)} \cdot d_2 (\alpha_1 - \alpha_2) & 0 & 0 \\
			0 & 0 & 0 \\
			0 & 0 & -d_3 (\alpha_1 - \alpha_3)
		\end{pmatrix},	
		u
		\begin{pmatrix}	
			0 & 0 & 0 \\
			0 & \frac{d_2(\alpha_2 - \alpha_3)}{d_1(\alpha_1 - \alpha_3)} \cdot d_1 (\alpha_2 - \alpha_1) & 0 \\
			0 & 0 & -d_3 (\alpha_2 - \alpha_3)
		\end{pmatrix}
	\right).
\end{eqnarray*}
\end{footnotesize}
Owing to $q_k(A, B) = q_k((I, V) \cdot (A, B))$,
for any primes ${\mathfrak p}$ of $k$ that completely splits in $k_1 = k_2$, we must have
\begin{eqnarray*}
q_{k_{\mathfrak p}}\left( 
		\begin{pmatrix}	
			d_2 (\alpha_1 - \alpha_2) & 0 \\
			0 & d_3 (\alpha_1 - \alpha_3)
		\end{pmatrix}
	\right) 
 &=& q_{k_{\mathfrak p}} \left( u  
		\begin{pmatrix}	
			\frac{d_1(\alpha_1 - \alpha_3)}{d_2(\alpha_2 - \alpha_3)} \cdot d_2 (\alpha_1 - \alpha_2) & 0 \\
			0 & -d_3 (\alpha_1 - \alpha_3)
		\end{pmatrix}
	\right), \\
q_{k_{\mathfrak p}}\left( 
		\begin{pmatrix}	
			d_1 (\alpha_2 - \alpha_1) & 0 \\
			0 & d_3 (\alpha_2 - \alpha_3)
		\end{pmatrix}	
	\right) 
 &=& q_{k_{\mathfrak p}}\left(
		u
		\begin{pmatrix}	
			\frac{d_2(\alpha_2 - \alpha_3)}{d_1(\alpha_1 - \alpha_3)} \cdot d_1 (\alpha_2 - \alpha_1) & 0 \\
			0 & -d_3 (\alpha_2 - \alpha_3)
		\end{pmatrix}
	\right).
\end{eqnarray*}
This implies that there exists $\beta \in k_i$ such that $- d_1(\alpha_1-\alpha_3)/d_2(\alpha_2~-~\alpha_3)~=~\beta^2$
and $\left( W, 
		-u^{-1} V
	\right) 
	\cdot
	(A, B)
		=
	(A, B)$ if we put:
\begin{eqnarray*}
		W = w^{-1} 
		\begin{pmatrix}	
			0 & \beta & 0 \\
			\pm 1/\beta & 0 & 0 \\
			0 & 0 & 1 
		\end{pmatrix} w.
\end{eqnarray*}

If $k_1 = k_2 = k$, then $W \in GL_3(k)$ has the required property.
Otherwise, $k_1 = k_2$ is quadratic over $k$. 
If the signature of $\pm 1/\beta$ is chosen so that $\beta$ and $\pm 1/\beta$ are conjugate over $k$, 
$W \in GL_3(k)$ is obtained.

\item ($\tilde{V}^3 = V^3 = u^3 I$)
In this case, $k_1 = k_2 = k_3 = k$ or a Galois cubic field over $k$. As shown in the above example,
\begin{eqnarray*}
V
&=& u
\begin{pmatrix}
	\alpha_1 & -1 \\
	\alpha_2 & -1
\end{pmatrix}^{-1}
\begin{pmatrix}
\frac{\alpha_3 - \alpha_1}{\alpha_2 - \alpha_3} & 0 \\
0 & \frac{\alpha_1 - \alpha_2}{\alpha_3 - \alpha_1}
\end{pmatrix}
\begin{pmatrix}
	\alpha_2 & -1 \\
	\alpha_3 & -1
\end{pmatrix}, \\
&=& u
\begin{pmatrix}
	\alpha_1 & -1 \\
	\alpha_2 & -1
\end{pmatrix}^{-1}
\begin{pmatrix}
	0 & \frac{\alpha_3 - \alpha_1}{\alpha_2 - \alpha_3} \\
	-\frac{\alpha_2 - \alpha_3}{\alpha_3 - \alpha_1} & -1
\end{pmatrix}
\begin{pmatrix}
	\alpha_1 & -1 \\
	\alpha_2 & -1
\end{pmatrix}.
\end{eqnarray*}
Hence,
\begin{small}
\begin{eqnarray*}
	& & \hspace{-10mm}
	\left( w, 
		\begin{pmatrix}	
			\alpha_1 & -1 \\
			\alpha_2 & -1 
		\end{pmatrix} V
	\right) 
	\cdot
	(A, B) \nonumber \\
	&=&
	\left( I, u
\begin{pmatrix}
	0 & \frac{\alpha_3 - \alpha_1}{\alpha_2 - \alpha_3} \\
	-\frac{\alpha_2 - \alpha_3}{\alpha_3 - \alpha_1} & -1
\end{pmatrix}
	\right) 
	\cdot
	\left( 
		\begin{pmatrix}	
			0 & 0 & 0 \\
			0 & d_2 (\alpha_1 - \alpha_2) & 0 \\
			0 & 0 & d_3 (\alpha_1 - \alpha_3)
		\end{pmatrix},
		\begin{pmatrix}	
			d_1 (\alpha_2 - \alpha_1) & 0 & 0 \\
			0 & 0 & 0 \\
			0 & 0 & d_3 (\alpha_2 - \alpha_3)
		\end{pmatrix}	
	\right) \\
	&=&
	\left( -u
		\begin{pmatrix}	
			\frac{d_1(\alpha_3 - \alpha_1)}{d_2(\alpha_2 - \alpha_3)} \cdot d_2 (\alpha_1 - \alpha_2) & 0 & 0 \\
			0 & 0 & 0 \\
			0 & 0 & d_3 (\alpha_1 - \alpha_3)
		\end{pmatrix},	
		-u
		\begin{pmatrix}	
			d_1 (\alpha_2 - \alpha_1) & 0 & 0 \\
			0 & \frac{d_2(\alpha_1 - \alpha_2)}{d_3(\alpha_3 - \alpha_1)} \cdot d_3 (\alpha_2 - \alpha_3) & 0 \\
			0 & 0 & 0
		\end{pmatrix}
	\right).
\end{eqnarray*}
\end{small}

Then, there exits $\beta_{i_1} \in k_1 = k_2 = k_3$
such that $d_{i_2} (\alpha_{i_1} - \alpha_{i_2}) / d_{i_3} (\alpha_{i_1}~-~\alpha_{i_3})~=~\beta_{i_1}^2$ 
for every $(i_1, i_2, i_3) = (1, 2, 3), (2, 3, 1), (3, 1, 2)$.
$\left( W, 
		-u^{-1} V
	\right)~\cdot~(A, B)~=~(A, B)$ and $W \in GL_3(k)$ hold, if we put:
\begin{eqnarray*}
		W = w^{-1}
		\begin{pmatrix}	
			0 & \beta_3 & 0 \\
			0 & 0 & \beta_1 \\
			\beta_2 & 0 & 0 
		\end{pmatrix} 
		w.
\end{eqnarray*}
Consequently, the lemma is proved.
\end{itemize}

\end{proof}

\section{Proof of Theorem \ref{thm:main result over RationalField}}
\label{Proof of Theorem ref{thm:main result over RationalField}}

In what follows, we assume that $k = \RationalField$. 
In order to prove the theorem, we will first show that $q_\RationalField(A_1, B_1) = q_\RationalField(A_2, B_2)$
implies that either of the determinants is a constant multiple of the other.

\begin{prop}\label{prop:same det(Ax+By)}
If $(A_i, B_i) \in V_\RationalField$ ($i = 1, 2$) satisfy the conditions (\ref{item: assumption (a)}), (b') and (c') of Theorem \ref{thm:main result over RationalField}, 
there are coprime integers $r_1, r_2$ such that $r_1^{-1} \det (A_1 x - B_1 y) = r_2^{-1} \det (A_2 x - B_2 y)$.
\end{prop}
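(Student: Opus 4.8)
Write $g_i(x,y) := \det(A_i x - B_i y)$ for the (nonzero, by non-singularity) binary cubic attached to $(A_i,B_i)$. The target is exactly that $g_1$ and $g_2$ have the same zero divisor on $\mathbb{P}^1$, \IE $g_1 = c\,g_2$ for some $c \in \RationalField^\times$; writing $c = r_1/r_2$ in lowest terms then yields the claimed coprime integers. The plan is to prove this one prime at a time: I will produce a positive-density set of rational primes $p$ at which $g_1$ and $g_2$ split completely over $\RationalField_p$ with literally the same three roots in $\RationalField_p$, and then globalize.

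First I would locate the good primes using the quartic algebra. By Corollary~\ref{cor: (A, B) is isotropic over k}, condition (b') is equivalent to $\mathrm{ch}_\alpha$ having no root in $\RationalField$, so the quartic $\QA{\RationalField}{A_1}{B_1}$ is either a field or a product of two quadratic fields; in either case its Galois group, acting on the four roots, contains an element $\tau$ with no fixed point that acts trivially on the three roots of the resolvent cubic $g_1$ (a double transposition, which always exists in the admissible groups). By the Chebotarev density theorem there is a positive-density set $\Pi$ of primes whose Frobenius is $\tau$: at each $p \in \Pi$ the cubic $g_1$ splits completely over $\RationalField_p$ while the quartic has no $\RationalField_p$-root, so by Corollary~\ref{cor: (A, B) is isotropic over k} again $(A_1,B_1)$ is anisotropic over $\RationalField_p$.

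At such a $p$ I would transfer everything to $(A_2,B_2)$. Since the closures of $q_\RationalField(A_i,B_i)$ in $\RationalField_p^2$ agree and equal $\{(0,0)\} \cup q_{\RationalField_p}(A_i,B_i)$, and anisotropy over $\RationalField_p$ means $(0,0)$ is isolated from the image (compactness of $\mathbb{P}^2(\RationalField_p)$), $(A_2,B_2)$ is also anisotropic over $\RationalField_p$, and then ($C$1) gives $q_{\RationalField_p}(A_1,B_1) = q_{\RationalField_p}(A_2,B_2)$. By Lemma~\ref{lem: diagonalization over F} the complete splitting of $g_1$ lets me diagonalize $(A_1,B_1)$ over $\RationalField_p$, so $q_{\RationalField_p}(A_1,B_1)$ is the set of sums $\sum_j c_j (1,\alpha_j)$ with $c_j$ ranging over $d_j (\RationalField_p^\times)^2 \cup \{0\}$, from which the three roots $\alpha_j \in \RationalField_p$ of $g_1$ can be read off as the distinguished directions carrying a full square-coset ray. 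Matching this with the identical set for $(A_2,B_2)$ forces $g_2$ to split over $\RationalField_p$ with exactly the same roots.

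Finally I would globalize. For every $p \in \Pi$ the two cubics have equal root sets in $\RationalField_p$, so each root $\alpha$ of $g_1$ is congruent modulo a prime $\mathfrak p \mid p$ of a fixed number field $M$ containing all six roots to some root of $g_2$. If $g_1 \neq c\,g_2$, then some root $\alpha$ of $g_1$ is not a root of $g_2$, and the nonzero element $\prod_{\beta}(\alpha - \beta)$, the product over the roots $\beta$ of $g_2$, would be divisible by the infinitely many distinct primes $\mathfrak p$ of $M$ arising from $\Pi$, which is impossible. Hence $g_1$ and $g_2$ share all roots and $g_1 = c\,g_2$. I expect the main obstacle to be the extraction step: making rigorous that the $p$-adic representation set $\{\sum_j c_j(1,\alpha_j)\}$ determines the roots $\alpha_j$ and that set-equality compels $g_2$ to split with the same roots (there is no archimedean ordering to single out extreme rays over $\RationalField_p$), together with confirming the group-theoretic input that a fixed-point-free $\tau$ acting trivially on the resolvent always exists. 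The degenerate case $\mathrm{Disc}(A_i,B_i)=0$, where $g_i$ acquires a repeated root, would be handled separately, in the spirit of Proposition~\ref{thm:theorem 1}.
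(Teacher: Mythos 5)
Your first two paragraphs are essentially sound: a fixed-point-free element acting trivially on the resolvent roots (a double transposition) exists in every Galois group that can occur for a quartic with no rational root, so Chebotarev does give a positive-density set $\Pi$ of primes at which $g_1$ splits completely over $\RationalField_p$ while $(A_1,B_1)$ stays anisotropic there; and the transfer of anisotropy and of local representation sets to $(A_2,B_2)$ via ($C$1) is exactly what the paper also does. The gap is the extraction step, and it is not a removable technicality -- it is the central difficulty of the proposition. When $[\alpha_j:1]$ is a root of $g_1$ over $\RationalField_p$, one has $A_1\alpha_j - B_1 \sim C_j \perp [0]$ with $C_j$ binary, and the root is visible in $q_{\RationalField_p}(A_1,B_1)$ only when $C_j$ is anisotropic over $\RationalField_p$: in that case the projection $(u,v)\mapsto \alpha_j u - v$ of the representation set misses half the square classes of $\RationalField_p^\times$, which cannot happen in a non-root direction, where a nondegenerate ternary form misses at most one. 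If instead $C_j$ is isotropic over $\RationalField_p$, then $q_{\RationalField_p}(A_1\alpha_j - B_1)=\RationalField_p$ and the root is indistinguishable from a generic direction -- this is precisely the obstacle you flag yourself. Lemma \ref{lem: equivalence conditions on isotropy} guarantees global anisotropy of $C_j$ for at least two of the three roots (and for every irrational root), which is how the paper recovers two common roots by exactly your kind of local argument; but the third root can be rational with $C_j$ isotropic over $\RationalField$ itself, hence over every completion, so it is invisible to all such local tests, and the two pairs could a priori have distinct third roots $\gamma_1\neq\gamma_2$. Ruling that out is where the paper spends the bulk of the proof: an explicit normalization of both pairs, a Hilbert-symbol analysis of the product sets $T_{1,k}=T_{2,k}$, a Kronecker-density argument, and a final integrality argument on $\tilde d$. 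None of that is supplied or replaced by your outline, so the proposal does not prove the proposition. (This case is also not the ${\rm Disc}=0$ degeneracy you propose to defer to Proposition \ref{thm:theorem 1}; the roots are distinct here.)

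Two smaller remarks. First, your globalization step conflates congruence with equality: if the root sets of $g_1$ and $g_2$ agree in $\RationalField_p$, then under the (injective) embedding of the common splitting field determined by a place above $p$ the roots are literally equal, so a single good prime would suffice and the ``divisible by infinitely many primes'' argument is not needed. Second, to invoke Corollary \ref{cor: (A, B) is isotropic over k} you must first pass to the canonical form of Proposition \ref{prop: monogenic ring}, which requires choosing $h$ with $\det(B(h)A-A(h)B)\neq 0$ as in Lemma \ref{lem:generator of Q otimes RationalField}; this is available under (a) and (b'), but should be said.
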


\begin{proof}
Using the action of $GL_2(\RationalField)$ on $V_\RationalField$, we may assume that $\det A_1 \neq 0$.
If ${\rm Disc}(A_1, B_1) = 0$,
let $\alpha \in \RationalField$ be the multiple root of 
$\det(A_1 x - B_1) = 0$.
In this case, $A_1 \alpha - B_1$ has rank $\leq 1$ (Lemma \ref{lem: diagonalization over F}).
Owing to $q_\RationalField(A_1 \alpha - B_1) = q_\RationalField(A_2 \alpha - B_2)$, $A_2 \alpha - B_2$
has the same rank.  Therefore, $\alpha$ is a multiple root of $\det(A_2 x - B_2) = 0$.
Let $\beta \neq \alpha$ be another root of $\det(A_1 x - B_1) = 0$.
Since $(A_1, B_1)$ is anisotropic over $\RationalField$,
$C \in {\rm Sym}^2 (\RationalField^2)^*$ with $A_1 \beta - B_1 \sim_{\RationalField} C \perp [ 0 ]$ must be anisotropic over $\RationalField$. 
Hence, $A_2 \beta - B_2$ must have rank 2, and $\det(A_2 \beta - B_2) = 0$.
Thus, the proposition is proved if ${\rm Disc}(A_1, B_1)=0$.
The same holds if ${\rm Disc}(A_2, B_2)=0$.

We next assume that ${\rm Disc}(A_i, B_i) \neq 0$ ($i = 1, 2$).
Let $\alpha \in \bar{\RationalField}$ be a root of $\det(A_1 x - B_1) = 0$
and $K$ be the Galois closure of $k(\alpha)$ over $\RationalField$. 
If $C \in {\rm Sym}^2 (K^2)^*$ with $A_1 \alpha - B_1 \sim_{K} C \perp [ 0 ]$ is anisotropic over $K$, then
there are a finite prime $p$ of $\RationalField$ and an embedding $\iota: K \hookrightarrow \RationalField_p$ such that
$\iota(C)$ is anisotropic over $\RationalField_p$.
In this case, as a result of Corollary \ref{cor: anisotropic iff anisotropic},
$\det(A_2 \alpha - B_2) = 0$ is proved
by $q_{\RationalField_p}(A_2 \iota(\alpha) - B_2) = q_{\RationalField_p}(A_1 \iota(\alpha) - B_1)
 = \{ 0 \} \cup q_{\RationalField_p}(\iota(C))$.

Thus, by Lemma \ref{lem: equivalence conditions on isotropy},
the equations $\det(A_i x - B_i) = 0$ ($i = 1, 2$)
have at least two common roots $\alpha \neq \beta$.
In addition, if there exist distinct $\gamma_1 \neq \gamma_2$ such that $\det(A_i \gamma_i - B_i) = 0$ ($i = 1, 2$),
both $\gamma_i$ belong to $\RationalField$ and both $A_i \gamma_i - B_i$ are isotropic over $\RationalField$.
Thus, by using the action of $G_\RationalField$,
we may assume the following:
\begin{eqnarray}
(A_1, B_1) &=& 
	\left(
		\begin{pmatrix}
			a_{11} & a_{12} & 0 \\
			a_{12} & a_{22} & 0 \\
			0 & 0 & a_{33}
		\end{pmatrix},
		\begin{pmatrix}
			0 & c_1 & 0 \\
			c_1 & 0 & 0 \\
			0 & 0 & 0
		\end{pmatrix}
	\right), \\
(A_2, B_2) &=& 
	\left(
		\begin{pmatrix}
			0 & c_2 & 0 \\
			c_2 & 0 & 0 \\
			0 & 0 & 0
		\end{pmatrix},
		\begin{pmatrix}
			b_{11} & b_{12} & 0 \\
			b_{12} & b_{22} & 0 \\
			0 & 0 & b_{33}
		\end{pmatrix}
	\right).
\end{eqnarray}
Furthermore, by using the action of $G_\RationalField$,
we may assume that $a_{11} = b_{11} = 1$ and $c_1 = c_2 = -1/2$.
For the anisotropy of $(A_i, B_i)$ over $\RationalField$,
$-a_{33}, -b_{33}, -a_{22} a_{33}, -b_{22} b_{33} \notin (\RationalField^\times)^2$ is required.
In this case, if we put $g(x) := (x - \alpha)(x - \beta) \in \RationalField[x]$, then
\begin{eqnarray*}
	4 \det ( A_1 x - B_1 ) &=& -\{ 4 (a_{12}^2 - a_{22}) x^2 + 4 x a_{12} + 1 \} a_{33} x = -4 (a_{12}^2 - a_{22}) a_{33} x g(x), \\
	4 \det ( A_2 x -  B_2 ) &=& \{ x^2 + 4 x b_{12} + 4 (b_{12}^2 - b_{22}) \} b_{33} = b_{33} g(x).
\end{eqnarray*}
In particular, we have $4 (a_{12}^2 - a_{22}) b_{12} = a_{12}$ and $4 (b_{12}^2 - b_{22}) a_{12} = b_{12}$.
From this, we obtain
$(a_{22} - a_{12}^2)/(b_{22} - b_{12}^2) = a_{22} / b_{22} = a_{12}^2 / b_{12}^2 \in (\RationalField^\times)^2$.
Furthermore, for any $0 \neq x \in \RationalField$ we have
 \begin{eqnarray}
 A_1 x - B_1
 	&\sim_\RationalField& \begin{pmatrix}
 			x & 0 & 0 \\
 			0 & -4 (a_{12}^2 - a_{22}) g(x) / x & 0 \\
 			0 & 0 & a_{33} x
 		\end{pmatrix}, \\
 A_2 x - B_2
 	&\sim_\RationalField& 
 		\begin{pmatrix}
 			-1 & 0 & 0 \\
 			0 & g(x) & 0 \\
 			0 & 0 & -b_{33}
 		\end{pmatrix}.
 \end{eqnarray}

Let $P_0$ be the set of odd primes $p$ of $\RationalField$
such that $p$ completely splits in $\RationalField[x] / (g(x))$ (\IE $a_{22}, b_{22} \in (\RationalField_{p}^\times)^2$).
For a fixed $p \in P_0$,
we will denote the roots of $g(x) = 0$ in $\RationalField_{p}$ by $\alpha_{p}$ and $\beta_{p}$.
Let $P \subset P_0$ be the subset consisting of all $p \in P$ with 
$4 b_{12} \in \IntegerRing_p$ and 
$a_{33}$, $b_{33}$, $4 (a_{12}^2 - a_{22}), 4(b_{12}^2 - b_{22}) \in \IntegerRing_p^\times$.
In this case, for any $p \in P$, $\alpha_p$ and $\beta_p$ belong to $\IntegerRing_p^\times$.
By setting $z$ to an element of $\RationalField_p$ close to $\alpha_{p}$ (\textit{resp.} $\beta_{p}$),
$g(z) \in p \IntegerRing^\times$ and $4 (a_{12}^2 - a_{22}) / z \in \beta_p^{-1} (\IntegerRing_p^\times)^2$
(\textit{resp.} $\alpha_{p}^{-1} (\IntegerRing_p^\times)^2$) can be assumed.
$\left( \frac{-a_{33} }{p} \right) = \left( \frac{-b_{33}}{p} \right)$ is then obtained from $q_{\RationalField_p}(A_1 x - B_1) = q_{\RationalField_p}(A_2 x - B_2)$. In addition, either of the following holds:
\begin{enumerate}[(i)]
	\item $\left( \frac{-a_{33} }{p} \right) = \left( \frac{-b_{33}}{p} \right) = 1$.
	\item $\left( \frac{ -\alpha_p }{p} \right) = \left( \frac{-\beta_p }{p} \right) = 1$. 
\end{enumerate}

Let $K \subset \bar{\RationalField}$ be the extension of $\RationalField$ obtained by attaching $\sqrt{b_{22}}$ to 
$\RationalField$, \IE the splitting field of $g(x)$ over $\RationalField$.
If $K(\sqrt{-b_{33}})$ is denoted by $F_1$, we have $F_1 \supsetneq K$, since $-b_{33}, -b_{22} b_{33} \notin (\RationalField^\times)^2$.
Let $F_2$ be the extensions of $K$ that are obtained by attaching the roots of $g(-x^2) = 0$ to $K$.
$F_2 = K$ is proved as follows;
if $F_2 \supsetneq K$, let $F_3$ be the composition $F_1 F_2$.
Let $Q_i$ ($i = 1, 2, 3$) be the set of all primes of $K$ that completely splits in $F_i$. 
Since the extension $F_i / K$ is Galois,
the Kronecker density 
$d_{i} := \lim_{s \downarrow 1} \frac{ \sum_{p \in Q_i} p^{-s} }{ \log (1/(s - 1)) }$
equals $1 / [F_i : K]$ (Theorem 8.41 (2), \cite{Kato2011}).
However, from $p \in P \Rightarrow$ (i) or (ii), we obtain $d_1 + d_2 \geq 1$.
Owing to $d_1 = 1/2$, in order that $d_3 > 0$, $d_2 = 1$, \IE $K = F_2$ is required.

As a result, $-\alpha = \delta_1^2, -\beta = \delta_2^2$ for some $\delta_1, \delta_2 \in K^\times$,
and $\delta_1+\delta_2, \delta_1\delta_2 \in \RationalField$ may be assumed.
We also have the following:  
\begin{eqnarray*}
a_{12} = \frac{\delta_1^{-2} + \delta_2^{-2}}{4}, \quad a_{22} = \left(\frac{\delta_1^{-2} - \delta_2^{-2}}{4} \right)^2, \quad
b_{12} = \frac{\delta_1^2 + \delta_2^2}{4}, \quad b_{22} = \left(\frac{\delta_1^2 - \delta_2^2}{4} \right)^2. 
\end{eqnarray*}

We shall show that $a_{22}, b_{22} \in (\RationalField^\times)^2$, and therefore, $\delta_1, \delta_2 \in \RationalField$, owing to $K = F_2 = \RationalField$;
if we put $c := (\delta_1^{-1} + \delta_2^{-1})/2$ and 
$d := \delta_1^{-1} \delta_2^{-1}/2$, then $c, d$ are rational, and satisfy $d^2 = a_{12}^2 - a_{22}$, $a_{12} + d = c^{2}$ and $a_{12} - d = a_{22}/c^2$.
Furthermore, $A_1({\mathbf x}) = 0$
if and only if ${\mathbf x} = (x_1, x_2, x_3) \in \RationalField$
satisfies $(x_1 + (a_{12} + d) x_2)(x_1 + (a_{12} - d) x_2) = -a_{33} x_3^2$.  Hence,
if $x_1 + (a_{12} + d) x_2 \ne 0$, there are $s, 0 \ne t \in \RationalField$ such that 
$-2d x_1 = \{ a_{12} - d + (a_{12} + d) a_{33} s^2 \} t$, 
$2d x_2 = (1 + a_{33} s^2) t$ and $x_3 = st$.
Thus, for a field $F \supset \RationalField$, if $T_{1, F}, T_{2, F}$ are defined as follows,
$T_{1, \RationalField} = T_{2, \RationalField}$ is obtained by considering the representations $(0, *)$:
%
\begin{eqnarray*}
T_{1, F}
 &:=&
\left\{ \left( 1 + a_{33} s^2 \right) \left(a_{12} - d + (a_{12} + d) a_{33} s^2 \right) t^2: 
\begin{matrix}
s, t \in F, t \neq 0 
\end{matrix}
\right\} \\
 &=&
\left\{ \left( 1 + a_{33} s^2 \right) \left( a_{22}/c^2 + a_{33} c^2 s^2 \right) t^2: 
\begin{matrix}
s, t \in F, t \neq 0 
\end{matrix}
\right\}, \\
T_{2, F} &:=&
q_F(g_1) \cup q_F(g_2).
\end{eqnarray*}
where 
$g_1, g_2$ are binary quadratic forms defined by $g_1(x, y) := x^2 + b_{33} y^2$, $g_2(x, y) := b_{22} x^2 + b_{33} y^2$.
From $\left( \frac{-a_{33} }{p} \right) = \left( \frac{-b_{33}}{p} \right)$ for any $p \in P$,
we also have $a_{33} / b_{33} \in (K^\times)^2$, hence,
$a_{33} / b_{33} \in (\RationalField^\times)^2$ or $a_{33} / b_{22} b_{33} \in (\RationalField^\times)^2$.

$T_{1, F} \subset T_{2, F}$ implies that for any $s \in \RationalField$,
either of the following holds:
\begin{itemize}
	\item $(1 + a_{33} s^2) ( a_{22} / c^2 + a_{33} c^2 s^2 ) \in q_\RationalField(g_1)$,
	\item $(1 + a_{33} s^2) ( a_{22} / c^2 + a_{33} c^2 s^2 ) \in q_\RationalField(g_2)$.
\end{itemize}
In case of $a_{33} / b_{33} \in (\RationalField^\times)^2$,  
each item implies the following, which is obtained by using the composition of binary quadratic forms:
\begin{itemize}
	\item $a_{22} / c^2 + a_{33} c^2 s^2 \in q_{\RationalField}(g_1)$,
	\item $1 + a_{33} s^2 \in q_{\RationalField}(g_2)$.
\end{itemize}
However, this is impossible if $a_{22} \notin (\RationalField^\times)^2$, 
because there are finite primes $p_1 \ne p_2$ 
such that $q_{\RationalField_{p_1}}(g_1) \not\subset q_{\RationalField_{p_1}}(g_2)$
and
$q_{\RationalField_{p_2}}(g_2) \not\subset q_{\RationalField_{p_2}}(g_1)$,
and it is possible to choose $(s_1, s_2) \in \RationalField^2$ with the following properties (hence $s_1 \ne 0$),
by using the Chinese remainder theorem:
\begin{itemize}
\item 
$s_1^2 + a_{33} s_2^2 \in q_{\RationalField_{p_1}}(g_1) \setminus q_{\RationalField_{p_1}}(g_2)$, 

\item $a_{22} s_1^2 / c^2 + a_{33} c^2 s_2^2 \in q_{\RationalField_{p_2}}(g_2) \setminus q_{\RationalField_{p_2}}(g_1)$.
\end{itemize}
Therefore, $a_{22} \in (\RationalField^\times)^2$ must hold,
which is similarly proved in case of $a_{33} / b_{22} b_{33} \in (\RationalField^\times)^2$.

Therefore, the following $(A_i \delta_1^2 + B_i, A_i \delta_2^2 + B_i) \in {\rm Sym}^2 (\RationalField^3)^* \otimes_{\RationalField} \RationalField^2$ ($i = 1, 2$) have the identical simultaneous representations over $\RationalField$.
\begin{small}
\begin{eqnarray*}
	(A_1 \delta_1^2 + B_1, A_1 \delta_2^2 + B_1)
	&=&
	\left( \delta_1^2
	\begin{pmatrix}
		1 & -\frac{\delta_1^{-2} - \delta_2^{-2}}{4} & 0 \\
		-\frac{\delta_1^{-2} - \delta_2^{-2}}{4} & \left(\frac{\delta_1^{-2} - \delta_2^{-2}}{4} \right)^2 & 0 \\
		0 & 0 & a_{33}
	\end{pmatrix},
	\delta_2^2
	\begin{pmatrix}
		1 & \frac{\delta_1^{-2} - \delta_2^{-2}}{4} & 0 \\
		\frac{\delta_1^{-2} - \delta_2^{-2}}{4} & \left(\frac{\delta_1^{-2} - \delta_2^{-2}}{4} \right)^2 & 0 \\
		0 & 0 & a_{33}
	\end{pmatrix}
	\right), \\
	(A_2 \delta_1^2 + B_2, A_2 \delta_2^2 + B_2)
	&=&
	\left( 
	\begin{pmatrix}
		1 & -\frac{\delta_1^{2} - \delta_2^{2}}{4} & 0 \\
		-\frac{\delta_1^{2} - \delta_2^{2}}{4} & \left(\frac{\delta_1^{2} - \delta_2^{2}}{4} \right)^2 & 0 \\
		0 & 0 & a_{33}
	\end{pmatrix},
	\begin{pmatrix}
		1 & \frac{\delta_1^{2} - \delta_2^{2}}{4} & 0 \\
		\frac{\delta_1^{2} - \delta_2^{2}}{4} & \left(\frac{\delta_1^{2} - \delta_2^{2}}{4} \right)^2 & 0 \\
		0 & 0 & a_{33}
	\end{pmatrix}
	\right).
\end{eqnarray*}
\end{small}

Let $D$ be the square free integer in $a_{33} (\RationalField^\times)^2$.
By using the action of $GL_3(\RationalField)$, $(A_i \delta_1^2 + B_i, A_i \delta_2^2 + B_i)$ ($i = 1, 2$) are transformed into the following $(\tilde{A}_i, \tilde{B}_i)$, respectively:
\begin{small}
\begin{eqnarray*}
	(\tilde{A}_1, \tilde{B}_1) &:=& 
	\left( 
	\begin{pmatrix}
		1 & 0 & 0 \\
		0 & 0 & 0 \\
		0 & 0 & D
	\end{pmatrix}, (\delta_2 / \delta_1)^2
	\begin{pmatrix}
		0 & 0 & 0 \\
		0 & 1 & 0 \\
		0 & 0 & D
	\end{pmatrix}
	\right), \\
	(\tilde{A}_2, \tilde{B}_2) &:=&
	\left( 
	\begin{pmatrix}
		1 & 0 & 0 \\
		0 & 0 & 0 \\
		0 & 0 & D
	\end{pmatrix},
	\begin{pmatrix}
		0 & 0 & 0 \\
		0 & 1 & 0 \\
		0 & 0 & D
	\end{pmatrix}
	\right).
\end{eqnarray*}
\end{small}

From $q_\RationalField(\tilde{A}_1, \tilde{B}_1) = q_\RationalField(\tilde{A}_2, \tilde{B}_2)$,
for any $(q, q) \in q_\RationalField(\tilde{A}_i, \tilde{B}_i)$ and $k \in \IntegerRing$, 
$(q, (\delta_1/\delta_2)^{2k} q) \in q_\RationalField(\tilde{A}_i, \tilde{B}_i)$ ($i = 1, 2$) holds. 
Equivalently, for any $0 \ne (u_1, u_3) \in \RationalField^2$ and $k \in \IntegerRing$,
there exists $u_2 \in \RationalField$ such that $u_1^2 + D u_3^2 = (\delta_2/\delta_1)^{2k} (u_2^2 + D u_3^2)$.
Hence, some $\zeta \in \RationalField(\sqrt{-D})$ with the norm 1
satisfies $\zeta (u_1 + \sqrt{-D} u_3) = (\delta_2/\delta_1)^{k}(u_2 + \sqrt{-D} u_3)$.
As a result of Hilbert's theorem 90, some $0 \ne (s, t) \in \IntegerRing^2$
satisfies $\zeta = (s + \sqrt{-D} t)/(s - \sqrt{-D} t)$. Therefore, 
\begin{eqnarray}\label{eq: formula for proposition}
	\{ u_3 (s^2 - D t^2) + 2 u_1 s t \}/(s^2 + D t^2) = (\delta_2/\delta_1)^{k} u_3.
\end{eqnarray}

Now $(\delta_2/\delta_1)^2 \ne 1$, because otherwise $a_{22} = 0$.
If $\eta_1, \eta_2$ are coprime integers with $\delta_2/\delta_1 = \eta_2/\eta_1$,
some prime $p$ divides either of $\eta_1, \eta_2$.
Hence, if $k$ is chosen so that $(\delta_2/\delta_1)^{k}$ is sufficiently close to 0 in $\RationalField_p$, 
Eq.(\ref{eq: formula for proposition})
implies that
the quadratic form $f_{u_1, u_3}(x_1, x_2) := u_3 (x_1^2 - D x_2^2) + 2 u_1 x_1 x_2$ is isotropic over $\RationalField_p$.
A contradiction is obtained by choosing $0 \ne (u_1, u_3) \in \RationalField^2$ with $u_1^2 + D u_3^2 \notin (\RationalField_p^\times)^2$. 
\end{proof}
We can now proceed to the proof of Theorem \ref{thm:main result over RationalField}.
As in the previous section, we denote the quartic and cubic $\RationalField$-algebras assigned to $(A_i, B_i)$ by 
$(Q_\RationalField(A_i, B_i), \langle 1, \xi_{i,1}, \xi_{i,2}, \xi_{i,3} \rangle)$ and 
$(R_\RationalField(A_i, B_i), \langle 1, \omega_{i,1}, \omega_{i,2} \rangle)$.

\begin{proof}[Proof of Theorem \ref{thm:main result over RationalField}]
We fix $(A_i, B_i) \in V_\RationalField$ ($i =1, 2$) as stated in Theorem \ref{thm:main result over RationalField}.
From Proposition \ref{prop:same det(Ax+By)}, 
we have $r_1^{-1} \det (A_1 x + B_1 y) = r_2^{-1} \det (A_2 x + B_2 y)$ for some coprime integers $r_1, r_2$.
In what follows, we denote $4 \det (A_i x - B_i y) / r_i$ by $\tilde{f}_{det}(x, y) = a x^3 + b x^2 y + c x y^2 + d y^3$ ($a, b, c, d \in \RationalField$).


From Lemma \ref{lem:generator of Q otimes RationalField}, 
some $h_1 := (h_{1,1}, h_{1,2}, h_{1,3}) \in \RationalField^3$ satisfies $\tilde{f}_{det}( B_1(h_1), A_1(h_1) )\neq~0$.
From the assumption (c'), 
there exists $h_2 := (h_{2,1}, h_{2,2}, h_{2,3}) \in \RationalField^3$ such that $(A_1, B_1)(h_1) = (A_2, B_2)(h_2)$, hence,
$\tilde{f}_{det}( B_2(h_2), A_2(h_2) ) \neq 0$.
Using 
these $h_1$, $h_2$ and arbitrarily chosen $h_{1,0}, h_{2,0} \in \RationalField$,
we define $\alpha_i := h_{i,0} + \sum_{j=1}^3 h_{i,j} \xi_{i,j} \in Q_\RationalField(A_i, B_i)$.
If we put $q_A := A_1(h_1) = A_2(h_2)$ and $q_B := B_1(h_1) = B_2(h_2)$, then $\alpha_i$ 
has the characteristic polynomial ${\rm ch}_{\alpha_i}(x) := x^4 + a_{i,3} x^3 + a_{i,2} x^2 + a_{i,1} x + a_{i,0}$ as follows:
\begin{eqnarray*}
{\rm ch}_{\alpha_i}^{res} \left(x + \frac{ a_{i,2} + r_i (c q_A - b q_B) }{3} \right) 
&=& x^3 + r_i (c q_A - b q_B) x^2 + r_i^2 \{ b d q_A^2 + (3 a d - b c) q_A q_B  + a c q_B^2 \} x \nonumber \\
& &  + r_i^3 \{ a d^2 q_A^3 - (b^2 d - 2 a c d) q_A^2 q_B + (a c^2 - 2 a b d) q_A q_B^2 - a^2 d q_B^3 \}.
\end{eqnarray*}
Hence, 
\begin{eqnarray}\label{eq:relation between ch(beta_i)^{res}(x)}
	r_1^{-3} {\rm ch}_{\alpha_1}^{res} \left( r_1 x + \frac{ a_{1,2} }{3} \right) = r_2^{-3} {\rm ch}_{\alpha_2}^{res} \left( r_2 x + \frac{ a_{2,2} }{3} \right).
\end{eqnarray}
Let $W_1$, $W_2$ and $V_i$ be the rational matrix determined by
\begin{eqnarray*}
	W_i
	\begin{pmatrix}
		\overline{\xi_1} \\ \overline{\xi_2} \\ \overline{\xi_3} 
	\end{pmatrix} 
 &=&
	\begin{pmatrix}
		1  & 0 & 0 \\
		a_{i,3}  & 1 & 0 \\
		a_{i,2}  & a_{i,3} & 1
	\end{pmatrix}
	\begin{pmatrix}
		\overline{\alpha_i} \\ \overline{\alpha_i^2} \\ \overline{\alpha_i^3} 
	\end{pmatrix}, \\
V_i
 &:=& \frac{1}{ r_i \tilde{f}_{det}(q_B, -q_A)}
\begin{pmatrix}
 1 & 0  \\
r_i  (c A(h) - b B(h)) /3 & 1
\end{pmatrix}
\begin{pmatrix}
 q_B & -q_A \\
-r_i (c q_A q_B + d q_A^2) & - r_i (a q_B^2 + b q_A q_B) 
\end{pmatrix}.
\end{eqnarray*}

Since 
$\begin{pmatrix}
	r_1 & 0 \\
	0 & 1
\end{pmatrix} V_1
=
\begin{pmatrix}
	r_2 & 0 \\
	0 & 1
\end{pmatrix} V_2
$,
the following $(\tilde{A}_i, \tilde{B}_i)$ ($i = 1, 2$)
have the identical sets of simultaneous representations over $\RationalField$ owing to the (c'):
\begin{eqnarray}
(\tilde{A}_i, \tilde{B}_i) 
:= 
\left( W_i, 
\begin{pmatrix}
	r_i & 0 \\
	0 & 1
\end{pmatrix} V_i \right)
(A_i, B_i).
\end{eqnarray}
From $4 \det (\tilde{A}_i x - \tilde{B}_i) = {\rm ch}_{\alpha_i}^{res} \left( r_i x + a_{i,2} / 3 \right)$,
$\det (\tilde{A}_1 x - \tilde{B}_1) = (r_1/r_2)^{3} \det (\tilde{A}_2 x - \tilde{B}_2)$ is obtained.
Even if $\alpha_i$ is replaced by $\alpha_i - \trace{\alpha_i}/4$ (${\rm Tr}$ is the trace function), all the above hold.
The proof of the theorem is completed by the following Proposition \ref{prop:case of monogenic}.
\end{proof}

\begin{prop}\label{prop:case of monogenic}
We assume that $r_1, r_2$ are coprime integers
and the following $(A_i, B_i) \in V_\RationalField$ ($i = 1, 2$)
satisfy 
the assumptions (b') and (c') of Theorem \ref{thm:main result over RationalField} and $\det (A_1 x - B_1) = (r_1/r_2)^{3} \det (A_2 x - B_2)$:
\begin{small}
\begin{eqnarray}\label{eq:definition of (A_i, B_i)}
(A_i, B_i) := 
	\left(
	\begin{pmatrix}
		       0 &       0     & r_i/2 \\
               0 & - r_i & 0 \\
          r_i/2 & 0 & 0
	\end{pmatrix},
	-
	\begin{pmatrix}
		       1 &       0     & a_{i,2} / 6 \\
               0 & 2 a_{i,2} / 3 & a_{i,1}/2 \\
	        a_{i,2} / 6 & a_{i,1}/2  & a_{i,0}
	\end{pmatrix}
	\right).
\end{eqnarray}
\end{small}
Then, either of the following holds:
\begin{enumerate}[1.]
\item 
there exist coprime integers $u_1, u_2$ such that $r_i = u_i^2$ ($i = 1, 2$),
and 
$(A_1, B_1) = (w, I) \cdot (A_2, B_2)$ 
for the following $w \in GL_3(\RationalField)$:
\begin{eqnarray}
	w = 
	\begin{pmatrix}
		1 & 0 & 0 \\
		0 & u_1/u_2 & 0 \\
		0 & 0 & (u_1/u_2)^2
	\end{pmatrix}.
\end{eqnarray}

\item 
There exist $s_1, s_2 \in \RationalField$ such that $(r_2 / r_1) (s_1^4 + a_{1,2} s_1^2 + a_{1,1} s_1 + a_{1,0}) = (r_1 / r_2) (s_2^4 + a_{2,2} s_2^2 + a_{2,1} s_2 + a_{2,0}) = c^2$ for some $c \in \RationalField^\times$,
and $(w, (r_2/r_1) I) \cdot (A_1, B_1) = (A_2, B_2)$
for the following $w \in GL_3(\RationalField)$:
\begin{eqnarray}\label{eq:case of t neq 0}
	w = 
 	\begin{pmatrix}
 		       1 & 0 & 0 \\
 		       2 s_2 & 1 & 0 \\
 	             s_2^2 & s_2 & 1
 	\end{pmatrix}^{-1}
	\begin{pmatrix}
		0 & 0 & 1/c \\
		0 & 1 & 0 \\
		c & 0 & 0
	\end{pmatrix}
 	\begin{pmatrix}
 		       1 & 0 & 0 \\
 		       2 s_1 & 1 & 0 \\
 	             s_1^2 & s_1 & 1 
 	\end{pmatrix}.
\end{eqnarray}

\end{enumerate}
The above 1.\ always holds, if ${\rm Disc}(A_i, B_i) = 0$ for (either of) $i = 1, 2$. 

\end{prop}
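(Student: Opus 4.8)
The plan is to leverage the hypothesis $\det(A_1 x - B_1) = (r_1/r_2)^3\det(A_2 x - B_2)$: since the two cubics are proportional, they share the same roots $\alpha_1,\alpha_2,\alpha_3$, namely the roots of the common normalized cubic $4\det(A_i x-B_i)=r_i^3 f_{det}(x)$. I would first dispose of the degenerate case ${\rm Disc}(A_i,B_i)=0$, in which $f_{det}$ has a double root $\alpha\in\RationalField$ and, exactly as at the start of the proof of Proposition \ref{prop:same det(Ax+By)}, each $A_i\alpha-B_i$ has rank one. Writing $A_i\alpha-B_i\sim_\RationalField c_i\ell_i^2$, the equality $q_\RationalField(A_1\alpha-B_1)=q_\RationalField(A_2\alpha-B_2)$ forces $c_1/c_2\in(\RationalField^\times)^2$; combined with the determinant relation this gives $r_1/r_2\in(\RationalField^\times)^2$, and since $r_1,r_2$ are coprime integers one concludes $r_i=u_i^2$. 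Diagonalizing the common anisotropic complement then produces the diagonal $w$ of conclusion~1 directly.

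For the main case ${\rm Disc}(A_i,B_i)\neq 0$, note $\det A_i=r_i^3/4\neq 0$, so Lemma \ref{lem: diagonalization over F} applies over the splitting field $K=\RationalField(\alpha_1,\alpha_2,\alpha_3)$ of $f_{det}$. Using the \emph{same} eigenvalues $\alpha_j$ for both $i$ (and the kernels $v_{i,j}$ of $A_i\alpha_j-B_i$ as in Lemma \ref{lem: equivalence conditions on isotropy}), I would write
\begin{equation*}
(A_i,B_i)\sim_K\left(\,[\,d_{i,1},d_{i,2},d_{i,3}\,],\ [\,d_{i,1}\alpha_1,d_{i,2}\alpha_2,d_{i,3}\alpha_3\,]\,\right).
\end{equation*}
Over $\bar{\RationalField}$ any transition $(W,V)$ from $(A_1,B_1)$ to $(A_2,B_2)$ has a $GL_2$–part $V$ that sends $f_{det}$ to itself up to scalar, so by Lemma \ref{lem:automorphism of cubic polynomial} one has $V^n=u^nI$ for some $n\in\{1,2,3\}$. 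Because $q_\RationalField(A_1,B_1)=q_\RationalField(A_2,B_2)$, the hypotheses of Lemma \ref{lem:automorphism of (A, B) with V} are met, and composing the candidate transition with the rational automorphisms it supplies lets me arrange $V=\lambda I$ to be scalar. The problem then reduces to a diagonal scaling $w_0=[\,t_1,t_2,t_3\,]$ with $t_j^2=d_{2,j}/(\lambda d_{1,j})$, and everything hinges on for which $\lambda$ this scaling descends to a rational $w$.

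The heart of the argument is a local square-class computation mirroring the Hilbert-symbol analysis of Proposition \ref{prop:same det(Ax+By)}. Passing from $q_\RationalField(A_1,B_1)=q_\RationalField(A_2,B_2)$ to the local identities $q_{\RationalField_v}(A_1,B_1)=q_{\RationalField_v}(A_2,B_2)$ via Corollary \ref{cor: anisotropic iff anisotropic} at the density-one set of primes where $f_{det}$ splits completely, I would compare the diagonalized representations $\{\sum_j d_{i,j}x_j^2\}$ and pin down the classes of the ratios $d_{2,j}/d_{1,j}$ in $K_v^\times/(K_v^\times)^2$. Requiring in addition that the assembled matrix be $\mathrm{Gal}(K/\RationalField)$-equivariant (hence rational) leaves exactly two admissible normalizations. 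If $\lambda=1$ is admissible, the $t_j$ descend, $r_1/r_2\in(\RationalField^\times)^2$ yields $r_i=u_i^2$ by coprimality, and one reads off the diagonal $w$ of conclusion~1. Otherwise only $\lambda=r_2/r_1$ works, and the descent additionally demands a common square value ${\rm ch}_{\alpha_i}(s_i)\,(r_{3-i}/r_i)=c^2$ (admissible because, by anisotropy and Corollary \ref{cor: (A, B) is isotropic over k}, ${\rm ch}_{\alpha_i}$ has no rational root); here the rational transformation acquires the swap-and-translate shape of (\ref{eq:case of t neq 0}), giving conclusion~2. The translate-by-$s_i$ blocks are precisely the basis changes of Proposition \ref{prop: monogenic ring}, and the central swap of coordinates $x_1\leftrightarrow x_3$ is the symmetry of the canonical $A_i$.

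I expect the main obstacle to be this Galois-equivariant descent: organizing the three factorization types of $f_{det}$ (totally split, linear times quadratic, and irreducible cubic, the last split further into the cyclic and non-cyclic sub-cases exactly as in Lemma \ref{lem: equivalence conditions on isotropy}) so that the local square-class data at the split primes assemble into a single rational $w$ of the prescribed form, and verifying that no third normalization of $\lambda$ survives. The bookkeeping relating the abstract scalar $\lambda$ to the explicit entries $a_{i,0},a_{i,1},a_{i,2}$ of the canonical forms, and hence to the conditions $r_i=u_i^2$ and ${\rm ch}_{\alpha_i}(s_i)(r_{3-i}/r_i)=c^2$, is the delicate part of the calculation.
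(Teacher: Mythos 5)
There is a genuine gap at the heart of your argument: you never actually produce a \emph{rational} equivalence $(W,V)\in G_\RationalField$ between $(A_1,B_1)$ and $(A_2,B_2)$, yet everything downstream depends on having one. You construct a transition only over $\bar{\RationalField}$ (where simultaneous diagonalization makes it trivial) and then invoke Lemma \ref{lem:automorphism of (A, B) with V}; but that lemma requires $V\in GL_2(\RationalField)$ and the hypothesis $q_\RationalField(A,B)=q_\RationalField((I,V)\cdot(A,B))$, neither of which is available for a $V$ defined only over $\bar{\RationalField}$. The paper's proof spends most of its effort precisely on this point: it shows via Corollary \ref{cor: (A, B) is isotropic over k} and Corollary \ref{cor: anisotropic iff anisotropic} that $f_1$ and $f_2$ have roots in $\RationalField_p$ at the same completely split primes $p$, feeds this into Lemma \ref{lem:isomorphic k-algebras} (a Chebotarev-density argument plus the classification of quartic fields sharing a Galois closure) to conclude that $\RationalField[x]/(f_1)$ and $\RationalField[x]/(f_2)$ are isomorphic as $\RationalField$-algebras, and only then uses the explicit Bhargava-correspondence formulas of Proposition \ref{prop: monogenic ring} to convert the algebra isomorphism into a rational pair $(W,V)$. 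Your proposed substitute --- ``local square-class computation'' at the completely split primes followed by ``Galois-equivariant descent'' --- does not fill this hole: those primes have density $1/[K:\RationalField]$, not one, and knowing that the ratios $d_{2,j}/(\lambda d_{1,j})$ are local squares at such a thin set of places is nowhere near enough to conclude they are squares in $K_j$, which is what the descent of the diagonal scaling requires.

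A second, related gap is that the entire content of the dichotomy --- why exactly the two normalizations $\lambda=1$ and $\lambda=r_2/r_1$ survive, where the parameters $s_1,s_2$ come from, and why the square condition $(r_2/r_1)f_1(s_1,1)=c^2$ appears --- is deferred in your sketch as ``the delicate part of the calculation.'' In the paper these facts are not the output of a square-class bookkeeping exercise but fall out of an explicit algebraic identity: after normalizing $V$ to a scalar one gets $A_1(h)=0$, hence $h=C(s^2,st,t^2)$, and the factorization $(tx-s)\{tx_2+Cr_1(s^3-a_{1,2}st^2-2a_{1,1}t^3)\}=-Cr_1f_1(s,t)$ exhibits the generator $x_2$ of the second algebra as essentially $\tilde C/(tx-s)$, which is what forces $\tilde C^2=(r_2/r_1)f_1(s,t)$ and produces the swap-and-translate matrix of (\ref{eq:case of t neq 0}). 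Without this (or an equivalent explicit computation), conclusion 2 is asserted rather than proved.
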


For the proof, the following lemma is used.
\begin{lem}\label{lem:isomorphic k-algebras} 
Let $k$ be a global field with ${\rm char}\ k \neq 2$.
Assume that $f_1(x)$, $f_2(x) \in k[x]$ are monomial quartic polynomials 
with no roots in $k$, no multiple roots in $\bar{k}$,
and $k[x] / (f_1^{res}(x))$ and $k[x] / (f_2^{res}(x))$ are isomorphic as $k$-algebras.
Furthermore, assume that $f_1(x)$, $f_2(x)$ have a root in $k_{\mathfrak p}$ with respect to the same ${\mathfrak p} \in P$,
where $P$ is the set of all the primes of $k$ that completely splits in $k[x] / (f_i^{res}(x))$. 
In this case, 
$k[x] / (f_1(x))$ and $k[x] / (f_2(x))$ 
are isomorphic as $k$-algebras.
\end{lem}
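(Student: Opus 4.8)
The plan is to translate the statement into the language of \'etale $k$-algebras and their associated $G_k$-sets, where $G_k = \mathrm{Gal}(\bar k/k)$. Since $f_1, f_2$ have no roots in $k$ and no multiple roots, each $A_i := k[x]/(f_i(x))$ is an \'etale algebra whose associated $G_k$-set $X_i$ is the four-element set of roots, every orbit having size $\ge 2$; write $\rho_i : G_k \to \mathrm{Sym}(X_i) = S_4$ for the corresponding permutation representation. The resolvent cubic algebra $R_i := k[x]/(f_i^{res}(x))$ corresponds to the three-element $G_k$-set $Y_i$ of partitions of $X_i$ into two pairs, via the canonical surjection $S_4 \to S_3$ whose kernel is the Klein four-group $V_4$ of double transpositions. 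Under this dictionary the hypothesis $R_1 \cong R_2$ is exactly $Y_1 \cong Y_2$, and the goal $A_1 \cong A_2$ is exactly $X_1 \cong X_2$. So I must show that, under the root hypothesis, the three-element $G_k$-set determines the four-element one.

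The crux is a local dictionary at the primes of $P$. For all but finitely many $\mathfrak p$ the prime is unramified in the Galois closures of $A_1, A_2$, and the cycle type of $\mathrm{Frob}_{\mathfrak p}$ on $X_i$ equals the factorization type of $f_i$ over $k_{\mathfrak p}$. If moreover $\mathfrak p \in P$, then $\mathrm{Frob}_{\mathfrak p}$ acts trivially on $Y_i$, so $\rho_i(\mathrm{Frob}_{\mathfrak p}) \in V_4$; since every nontrivial element of $V_4$ is a fixed-point-free double transposition of $X_i$, it follows that $f_i$ has a root in $k_{\mathfrak p}$ if and only if $\mathrm{Frob}_{\mathfrak p}$ is trivial, i.e. if and only if $f_i$ splits completely over $k_{\mathfrak p}$. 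Because any prime splitting completely in $A_i$ automatically lies in $P$, the root hypothesis says precisely that $A_1$ and $A_2$ have the same set of completely split primes. By the Chebotarev density theorem (Bauer's theorem), two global fields with the same completely split primes coincide; applying this to the Galois closures $M_1, M_2$ of $A_1, A_2$ gives $M_1 = M_2 =: M$, and I may henceforth regard $X_1, X_2, Y_1, Y_2$ as $G$-sets for the single group $G = \mathrm{Gal}(M/k)$.

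This reduces the lemma to a finite group-theoretic check: if $G$ acts on a four-element set in two fixed-point-free ways $X_1, X_2$ inducing isomorphic actions $Y_1 \cong Y_2$ on the three pair-partitions, then $X_1 \cong X_2$. I would run through the transitive subgroups of $S_4$ ($S_4, A_4, D_4, C_4, V_4$) together with the single intransitive possibility $X = X' \sqcup X''$. For $S_4, A_4, C_4, V_4$ there is a unique conjugacy class of core-free index-$4$ subgroup, so $X$ is forced. The only transitive group with two such classes is $D_4$, but there the two quartics have \emph{non-isomorphic} resolvent cubics — their kernels are the two distinct Klein four-subgroups of $D_4$, which cut out different quadratic subfields of $M$ — so the hypothesis $Y_1 \cong Y_2$ excludes this case. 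In the reducible case, writing $A_i = k(\sqrt{d_i'}) \times k(\sqrt{d_i''})$, the resolvent recovers the class $d_i' d_i'' \bmod (k^\times)^2$ while $M$ recovers the full set of three quadratic subfields $\{k(\sqrt{d'}), k(\sqrt{d''}), k(\sqrt{d'd''})\}$; an elementary computation shows that a two-element subset of these three classes with prescribed product is unique, so the unordered pair $\{d_i', d_i''\}$ is pinned down and $A_1 \cong A_2$. (A short separate check shows the transitive and intransitive types cannot be confused, as they again give different resolvents.)

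The step I expect to be the main obstacle is the local dictionary ``has a root at $\mathfrak p \in P$'' $\iff$ ``splits completely'', since it is exactly what upgrades the superficially weak root hypothesis to the strong conclusion that the completely split primes of $A_1$ and $A_2$ agree; once that is in hand, the Chebotarev step and the group-theoretic enumeration are routine. A secondary point demanding care is the $D_4$ case, where the conclusion genuinely fails without hypothesis $R_1 \cong R_2$ — two non-isomorphic $D_4$-quartics can share a Galois closure — so it is essential that the resolvent isomorphism is invoked precisely to separate the two Klein four-subgroups. The assumption $\mathrm{char}\,k \ne 2$ is used throughout to guarantee that the algebras are \'etale and that the $V_4$/quadratic bookkeeping is valid.
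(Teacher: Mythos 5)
Your proof is correct, and it shares the paper's three-step skeleton: (1) show that for $\mathfrak{p} \in P$ a root in $k_{\mathfrak p}$ forces complete splitting, so that $k[x]/(f_1)$ and $k[x]/(f_2)$ have the same set of completely split primes; (2) conclude via Chebotarev/Bauer that the Galois closures coincide; (3) pin down the quartic algebra by a finite case analysis. Where you differ is in the execution of (1) and (3). For (1) the paper proves the local dictionary by the explicit identity $g_{i,{\mathfrak p}}(x)=C(x-\alpha_{i,{\mathfrak p}})^3\, f_i^{res}\bigl(-f_i'(\alpha_{i,{\mathfrak p}})/(x-\alpha_{i,{\mathfrak p}})-2\alpha_{i,{\mathfrak p}}^2\bigr)$, which exhibits the cubic cofactor's roots as rational functions of the resolvent's roots; you instead note that a Frobenius acting trivially on the resolvent lies in the kernel $V_4$ of $S_4\to S_3$, whose nontrivial elements are fixed-point-free on the four roots. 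For (3) the paper argues with the composite fields $K_i$ (Galois versus non-Galois, resolvent with or without a rational root via the decomposition $f_i=(x^2+u_i/2)^2-(u_i^2-4a_{i,0})(\cdots)^2$, subgroup conjugacy in $S_4$ and $A_4$, and a final check that a biquadratic field and a product of two quadratic fields have different resolvents), while you enumerate the possible images of $G=\mathrm{Gal}(M/k)$ in $\mathrm{Sym}(X_i)$ together with the conjugacy classes of core-free point stabilizers. The two case analyses cover the same ground, but your handling of the $D_4$ case is the more transparent one: the paper's corresponding sentence ("since they have the same Galois closure, the quadratic field is common") is precisely the point where the hypothesis $R_1\cong R_2$ must be invoked to distinguish the two Klein four-subgroups of $D_4$, and you state this explicitly, whereas the paper leaves it implicit. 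In short, same strategy, with the local step and the endgame carried out in Galois-theoretic rather than explicitly computational terms; your version is quicker to verify once the \'etale-algebra dictionary is granted, the paper's is self-contained at the level of polynomial identities.
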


\begin{proof}
It may be assumed that the coefficient of $x^3$ in $f_i(x)$ equals $0$. 
Fix a prime ${\mathfrak p} \in P$ so that both of $f_1(x)$ and $f_2(x)$ have a root in $k_{\mathfrak p}$.
Let $\alpha_{i, {\mathfrak p}} \in k_{\mathfrak p}$ ($i = 1, 2$) be the root.
If the roots of $g_{i, {\mathfrak p}}(x) := f_i(x) / (x - \alpha_{i, {\mathfrak p}}) \in k_{\mathfrak p}[x]$ are denoted by 
$\beta_{i,1}, \beta_{i,2}, \beta_{i,3} \in k_{\mathfrak p}$, then
\begin{eqnarray*}
 \alpha_{i, {\mathfrak p}} \beta_{i,1} + \beta_{i,2} \beta_{i,3}
 &=& -\alpha_{i, {\mathfrak p}} (\alpha_{i, {\mathfrak p}} + \beta_{i,2} + \beta_{i,3}) + \beta_{i,2} \beta_{i,3} \\
 &=& -2 \alpha_{i, {\mathfrak p}}^2 + (\alpha_{i, {\mathfrak p}} - \beta_{i, 2}) (\alpha_{i, {\mathfrak p}} - \beta_{i, 3}) \\ 
 &=& -2 \alpha_{i, {\mathfrak p}}^2 - f_i^\prime(\alpha_{i, {\mathfrak p}}) /(\beta_{i, 1} - \alpha_{i, {\mathfrak p}}),
\end{eqnarray*}
where  $f_i^\prime(x)$ is the first derivative of $f_i(x)$ with respect to $x$.
Thus, there exists $0 \neq C \in k_{\mathfrak p}$ such that
\begin{eqnarray}
	g_{i, {\mathfrak p}}(x)
	= C (x - \alpha_{i, {\mathfrak p}})^3
		f_i^{res} \left( - \frac{ f_i^\prime(\alpha_{i, {\mathfrak p}}) }{ x - \alpha_{i, {\mathfrak p}} } - 2 \alpha_{i, {\mathfrak p}}^2 \right).
\end{eqnarray}
From the assumption about $f_1^{res}(x)$ and $f_2^{res}(x)$, 
${\mathfrak p}$ completely splits in $k[x] / (f_1(x))$ if and only if it does in $k[x] / (f_2(x))$.

Let $f_i(x) = \prod_{j=1}^m g_{ij}(x)$ ($g_{ij} \in k[x]$) be a factorization in $k$.
By assumption, $k[x] / (f_i(x))$ is a direct sum of $k[x]/(g_{ij}(x))$ with degree $2$ or 4 over $k$.
For each $g_{ij}(x)$, we 
fix an embedding $\iota_{ij} : k[x]/(g_{ij}(x)) \hookrightarrow \bar{k}$
and let $K_i$ be the composite field of $\iota_{ij} (k[x]/(g_{ij}(x)))$ ($j = 1, \cdots, m$).
$K_1$, $K_2$ are quadratic or quartic fields over $k$,
and any prime ${\mathfrak p}$ of $k$ completely splits in $K_1$ if and only if it does over $K_2$.
Hence, $K_1$, $K_2$ have the identical Galois closure over $k$ (Theorem 8.8, \cite{Kato2011}).

If both of $K_i$ are Galois over $k$, then $K_1 = K_2$.
If either of $K_1$, $K_2$ is not Galois over $k$, both must be a quartic field not Galois over $k$.
Even in this case, $K_1$ and $K_2$ are isomorphic over $k$. This can be seen as follows;
first suppose that $f_i^{res}(x) = 0$ has a root $u_i$ in $k$.
In this case, 
$f_i(x) := x^4 + a_{i, 2} x^2 + a_{i, 1} x + a_{i, 0}$ is decomposed 
as follows:
\begin{eqnarray}
	f_i(x)
	 = 
	(x^2 + u_i/2)^2 - (u_i^2 - 4 a_{i,0}) \left( \frac{ a_{i, 1} }{ u_i^2 - 4 a_{i,0} } x - 1/2 \right)^2.
\end{eqnarray}
Even if $u_i^2 = 4 a_{i,0}$, $a_{i,1}=0$ follows from $f_i^{res}(x) = (x - a_{i, 2})(x^2 - 4 a_{i, 0}) - a_{i, 1}^2$. 
Therefore $K_1$, $K_2$ are quadratic extensions of a quadratic field over $k$.
Since they have the same Galois closure, the quadratic field is common, 
and $K_1, K_2$ are conjugate over $k$. 

Next, suppose that $f_i^{res}(x)$ does not have a root in $k$.
In this case, the Galois closure $F$ of $K_1$, $K_2$ contains a cubic field isomorphic to $k[x]/(f_i^{res}(x))$. 
Since ${\rm Gal}(F/k)$ is isomorphic to a subgroup of $S_4$, this happens only when ${\rm Gal}(F / k) \cong S_4$ or $A_4$.
Since all the subgroups of $S_4$ (\textit{resp.} $A_4$) of index $4$ 
are conjugate to $S_3 = \langle (1, 2, 3), (1, 2) \rangle$ (\textit{resp.} $A_3  = \langle (1, 2, 3) \rangle$),
$K_1, K_2$ are conjugate over $k$. 

Consequently, if $k[x] / (g_{ij}(x)) = K_i$ for all $1 \leq j \leq m$, $k[x] / (f_{1}(x))$, $k[x] / (f_{2}(x))$
are isomorphic as $k$-algebras.
It remains to verify the case of 
$K_1 = F_1 \cdot F_2$ and $K_2 = F_1 \oplus F_2$, 
where $F_1, F_2$ are distinct quadratic fields.
They correspond to the $G_k$-orbits 
containing
\begin{itemize}
\item $k(\sqrt{d_1}, \sqrt{d_2})$,
\begin{eqnarray}
(A, B) :=
\left(
\begin{pmatrix}
	-d_1 & 0 & 0 \\
	0 & 1 & 0 \\
	0 & 0 & 0 \\
\end{pmatrix}
\begin{pmatrix}
	d_2 & 0 & 0 \\
	0 & 0 & 0 \\
	0 & 0 & -1 \\
\end{pmatrix} \right).
\end{eqnarray}

\item $k(\sqrt{d_1}) \oplus k(\sqrt{d_2})$,
\begin{eqnarray}
(A, B) :=
\left(
\begin{pmatrix}
	-d_1 & 0 & 0 \\
	0 & -d_2 & 0 \\
	0 & 0 & 1 \\
\end{pmatrix}
\begin{pmatrix}
	0 & -1/2 & 0 \\
	-1/2 & 0 & 0 \\
	0 & 0 & 0 \\
\end{pmatrix} \right).
\end{eqnarray}

\end{itemize}

However, $\det(A x - B y)$ of the former has only rational roots,
whereas that of the latter has irrational roots.
Therefore, $f_1^{res}(x) = f_2^{res}(x)$ cannot happen. Hence, this case can be eliminated.

\end{proof}

\begin{proof}[Proof of Proposition \ref{prop:case of monogenic}]
We first put $f_i(x) := x^4 + a_{i, 2} x^2 + a_{i, 1} x + a_{i, 0}$.
It follows from $\det(A_1 x - B_1y) = (r_1/r_2)^3 \det(A_2 x - B_2 y)$ that
\begin{eqnarray*}
(1/r_1)^2 (4 a_{1,0} + a_{1,2}^2/3) &=& (1/r_2)^2 (4 a_{2,0} + a_{2,2}^2/3), \label{eq: equation 1 between a_{i,0} + a_{i,2}} \\
(1/r_1)^3 (a_{1,1}^2 - 8 a_{1,2} a_{1,0}/3 + 2 a_{1,2}^3 / 27) &=& (1/r_2)^3 (a_{2,1}^2 - 8 a_{2,2} a_{2,0}/3 + 2 a_{2,2}^3 / 27).
\end{eqnarray*}
Hence, $r_1^{-1} a_{1,2} = r_2^{-1} a_{2,2}$ implies that
$r_1^{-2} a_{1,0} = r_2^{-2} a_{2,0}$ and
$r_1^{-3} a_{1,1}^2 = r_2^{-3} a_{2,1}^2$. Thus, in this case,
case 1.\ occurs.  Therefore, in what follows we assume that $r_1^{-1} a_{1,2} \neq r_2^{-1} a_{2,2}$.

In case of ${\rm Disc}(f_1^{res}) = {\rm Disc}(f_2^{res}) = 0$,
the multiple root of $f_i^{res}(x) = \det (A_i x - B_i)$ for $i = 1, 2$, 
equals $x = 2 a_{1, 2} / 3 r_1 = 2 a_{2, 2} / 3 r_2$, which is seen by checking when $A_i x - B_i$ in Eq.(\ref{eq:definition of (A_i, B_i)}) is rank $1$.  In particular, $r_1^{-1} a_{1,2} = r_2^{-1} a_{2,2}$ follows in this case.

Since ${\rm Disc}(f_1) = 0$ implies ${\rm Disc}(f_1^{res}) = 0$,
we may now assume that both $f_i$ and $f_i^{res}$ have no multiple roots.
By Corollary \ref{cor: (A, B) is isotropic over k}, 
$\RationalField[x] / (f_i(x))$ is a direct sum of number fields of degree greater than $1$ over $\RationalField$.
Let $p$ be a finite prime that completely splits in $\RationalField[x] / (f_i^{res}(x))$.
$(A_i, B_i)$ is isotropic over $\RationalField_p$ 
if and only if $\RationalField_p$ contains a root of $f_i(x) := x^4 + a_{i, 2} x^2 + a_{i, 1} x + a_{i, 0} = 0$.
Hence, 
$f_1(x) = 0$ has a root in $\RationalField_p$ if and only if $f_2(x) = 0$ does.
By Lemma \ref{lem:isomorphic k-algebras},
$\RationalField[x] / (f_1(x))$ and $\RationalField[x] / (f_2(x))$ are isomorphic as $\RationalField$-algebras.
Thus, there exists $x_2 \in \RationalField[x] / (f_1(x))$
such that $x \mapsto x_2$ provides an isomorphism $\RationalField[x] / (f_2(x)) \rightarrow \RationalField[x] / (f_1(x))$.

Now $\QA{\RationalField}{A_1}{B_1}$ is a $\RationalField$-algebra isomorphic to $\RationalField[x] / (f_1(x))$
with the basis $\langle 1, r_1 x, r_1 (x^2 + a_{1,2}), r_1 (x^3 + a_{1,2} x + a_{1,1}) \rangle$.
We assume that $x_2$ is represented as $h_0 + r_1 \{ h_1 x + h_2 x^2 + h_3 (x^3 + a_{1,2} x) \}$ in $\QA{\RationalField}{A_1}{B_1}$,
 using some $h_0 \in \RationalField$ and $h := (h_{1}, h_{2}, h_{3}) \in \RationalField^3$.
If we put $b_0 = -(1/r_i)^3 (a_{i,1}^2 - 8 a_{i,2} a_{i,0}/3 + 2 a_{i,2}^3 / 27)$ and $b_1 = - (1/r_i)^2 (4 a_{i,0} + a_{i,2}^2/3)$, 
then 
$$\tilde{f}_{det}(x, y) := 4 r_i^{-3} \det (A_i x - B_i y) = x^3 + b_1 x y^2 + b_0 y.$$
In this case, by using the formulas given in the proof of Lemma \ref{prop: monogenic ring},
$(W, V) \in G_\RationalField$ satisfying $(W, V) \cdot (A_1, B_1) = (A_2, B_2)$ is obtained as follows
(herein, $\bar{x}$, $\bar{x_2}$ are the classes of $x$, $x_2$ in $\QA{\RationalField}{A_1}{B_1} / \RationalField$):
\begin{eqnarray*}
	W
	\begin{pmatrix}
		 \overline{r_1 x} \\ \overline{r_1 x^2} \\ \overline{r_1(x^3 + a_{1,2} x)}
	\end{pmatrix}
 &=&
	\begin{pmatrix}
		\overline{x_2} \\ \overline{x_2^2} \\ \overline{x_2^3 + a_{2,2} x_2}
	\end{pmatrix},
\end{eqnarray*}
\begin{eqnarray*}
V
 &:=& (\det W)^{-1}
\begin{pmatrix}
 r_2 & 0  \\
\frac{ r_1^3 b_1 A_1(h) }{3} & 1
\end{pmatrix}
\begin{pmatrix}
 B_1(h) & -A_1(h)  \\
 -r_1^3 ( b_1 A_1(h) B_1(h) + b_0 A_1(h)^2 ) & -r_1^3 B_1(h)^2
\end{pmatrix} \\
 &=& (\det W)^{-1}
\begin{pmatrix}
 r_2 B_1(h) & - r_2 A_1(h)  \\
 -r_1^3 ( \frac{2}{3} b_1 A_1(h) B_1(h) + b_0 A_1(h)^2 ) & - r_1^3 ( \frac{1}{3} b_1 A_1(h)^2 + B_1(h)^2)
\end{pmatrix}.
\end{eqnarray*}
These matrices have the determinants $\det W = r_1^3 ( B_1(h)^3 + b_1 A_1(h)^2 B_1(h) + b_0 A_1(h)^3 )$
and $\det V = -r_2 \det (W)^{-1}$.

From $q_\RationalField(A_1, B_1) = q_\RationalField(A_2, B_2)$,
$q_\RationalField(A_1, B_1) = q_\RationalField((I, V) \cdot (A_1, B_1))$ is obtained.
In addition, $V$ satisfies:
$$
(\det W)^2 \tilde{f}_{det}((x, y)\tilde{V}) = (r_2/r_1)^3 \tilde{f}_{det}(x, y), \quad \tilde{V} :=
\begin{pmatrix}
	1 & 0 \\
	0 & -1 \\
\end{pmatrix}
V
\begin{pmatrix}
	1 & 0 \\
	0 & -1 \\
\end{pmatrix}.
$$
By Lemma~\ref{lem:automorphism of cubic polynomial}, 
$\tilde{V}^n = u^n I$ holds for $u :=  (r_2/r_1)^3 / (\det V \det W^2) = r_2 \det V/r_1^3$ and either of $n = 1, 2, 3$.
If $n \ne 1$, by Lemma~\ref{lem:automorphism of (A, B) with V},
there exists $W_2 \in GL_3(\RationalField)$ such that 
$(W_2, -u^{-1} V) \cdot (A_1, B_1)~=~(A_1, B_1)$.
Thus, we may assume that $V$ is a scalar multiple of $I$,
by replacing $(W, V)$ with $(W, V) (W_2, -u^{-1} V)$, 
and the above $x_2$ by another element of $\RationalField[x]/(f_1(x))$ with the characteristic polynomial $f_2(x)$. 
Thus,
$A_1(h) = 0$, $B_1(h) = - r_1^{-3} r_2$, $\det W = - r_1^{-6} r_2^3$, 
and $V = r_1^3 r_2^{-1} I$. 
Therefore, 
$(r_1 W, (r_1/ r_2) I)$ also maps $(A_1, B_1)$ to $(A_2, B_2)$.

Since $A_1(h) = 0$, 
there are $0 \neq C \in \RationalField$ and $0 \neq (s, t) \in \RationalField$ such that $h = C (s^2, s t, t^2)$.
If we put $f_i(X, Y) := Y^4 f_i(X/Y)$ ($i = 1, 2$), then
$C^2 f_1(s, t) = r_1^{-3} r_2$ follows from $B_1(h) = - r_1^{-3} r_2$.
Since $x$, $x_2$ satisfies $\trace{x^2} = - 2 a_{1,2}$, $\trace{x^3} = - 3 a_{1,1}$ and $\trace{x} = \trace{x_2} = 0$ in $\RationalField[x] / (f_1(x))$, we obtain 
$x_2 = C r_1 ( s^2 x + s t (x^2 + a_{1, 2}/2) + t^2 (x^3 + a_{1,2} x + 3 a_{1,1}/4))$.
Hence, 
\begin{eqnarray*}
	& & \hspace{-10mm}
 	(t x - s)\{ t x_2 + C r_1 (s^3 + a_{1,2} s t^2/2 + a_{1,1} t^3 / 4) \} \\
	&=& C r_1 (t x - s) \left\{ s^3 + s^2 t x + s t^2 (x^2 + a_{1, 2}) + t^3 (x^3 + a_{1,2} x + a_{1,1}) \right\} \\
	&=& -C r_1 \left\{ s^4 - t^4 x^4 + a_{1, 2} t^2 (s^2 - t^2 x^2) + a_{1,1} t^3 (s - t x) \right\} \\
	&=& -C r_1 f_1(s, t). 
\end{eqnarray*}
If we put $\tilde{C} := -C r_1 f_1(s, t)$, 
$\tilde{C}^2 = (r_2 / r_1) f_1(s, t)$ follows from $C^2 f_1(s, t) = r_1^{-3} r_2$.
Hence, if $t = 0$, then $r_1 / r_2 \in (\RationalField^\times)^2$, and $x_2 = c x$ holds for $c := C r_1 s^2$,
which satisfies $c^2 = C^2 r_1^2 s^4 = r_2/r_1$.
As a result, $a_{2, 2} = c^2 a_{1, 2}$, $a_{2, 1} = c^3 a_{1, 1}$ and $a_{2, 0} = c^4 a_{1, 0}$ are obtained,
which implies the case 1.

If $t \neq 0$, 
the characteristic polynomials of $\tilde{x} := t x - s$, $\tilde{x}_2 := \tilde{C} (t x - s)^{-1}$
are as follows:
\begin{eqnarray*}
	{\rm ch}_{\tilde{x}}(X)
 	&=& f_1(X + s, t)
 	= X^4 + 4 s X^3 + (6 s^2 + a_{1, 2} t^2) X^2 + (4 s^3 + 2 a_{1, 2} s t^2 + a_{1, 1} t^3) X + f_1(s, t), \\
 	{\rm ch}_{\tilde{x}_2}(X)
 	&=& f_1(s, t)^{-1} X^4 {\rm ch}_{\tilde{x}}(\tilde{C} / X) \\
 	&=& f_1(s, t)^{-1} \{ \tilde{C}^4  + 4 \tilde{C}^3 s X + \tilde{C}^2 (6 s^2 + a_{1, 2} t^2) X^2 + \tilde{C} (4 s^3 + 2 a_{1, 2} s t^2 + a_{1, 1} t^3) X^3 + f_1(s, t) X^4 \}. \nonumber
\end{eqnarray*}

Therefore, in this case, 
Eq.(\ref{eq:case of t neq 0}) is obtained 
by putting $s_1 = s$, $s_2 = \tilde{C} (s^3 + a_{1,2} s t^2 / 2 + a_{1,1} t^3 / 4) / f_1(s, t)$ and $c = \tilde{C}$.
Thus, the proposition is proved.

\end{proof}

%

\section{Case of ${\rm Disc}(A_i, B_i) = 0$ (proofs of Propositions \ref{thm:proposition 1}, \ref{thm:theorem 1})}
\label{Case of Disc(A_i, B_i) = 0 (proofs of Propositions 1, 2)}

From the known result in the binary case, it is immediately obtained that 
$q_\IntegerRing(x_1^2 - x_1 x_2 + x_2^2, x_3^2) = q_\IntegerRing(x_1^2 + 3x_2^2, x_3^2)$.
In the case of (ii), if we put $(A_1, B_1) = (x_1^2 - x_1 x_2 + x_2^2, (x_1 + x_2 + 3 x_3)^2)$
and $(A_2, B_2) = (x_1^2 + 3x_2^2, (x_1 + 3 x_3)^2)$, we have
\begin{eqnarray*}
(A_2, B_2)(x_1, x_2, x_3) &=& (A_1, B_1)(x_1 + x_2, 2 x_2, -x_2 + x_3) \\
&=& (A_1, B_1)(2 x_1, x_1 + x_2, -x_1 + x_3) \\
&=& (A_1, B_1)(x_1 + x_2, x_1-x_2,  -x_1 - x_3).
\end{eqnarray*}

Hence, $q_\IntegerRing(A_2, B_2) \subset q_\IntegerRing(A_1, B_1)$, 
and the converse is also true, since any $(y_1, y_2) \in \IntegerRing^2$ 
can be represented in either of the ways $(x_1 + x_2, 2 x_2)$, $(2 x_1, x_1 + x_2)$ or $(x_1 + x_2, x_1-x_2)$
for some $x_1, x_2 \in \IntegerRing$.

Proposition \ref{thm:theorem 1} can be also proved in an elementary way.

\begin{proof}[Proof of Proposition \ref{thm:theorem 1}]
As proved in Proposition \ref{prop:same det(Ax+By)},
$q_\IntegerRing(A_1, B_1) = q_\IntegerRing(A_2, B_2)$ 
implies that all the roots of $\det (A_1 x - B_1 y)$
and $\det (A_2 x - B_2 y)$ are common.
In particular, ${\rm Disc}(A_1, B_1) = 0$ leads to ${\rm Disc}(A_2, B_2) = 0$.

By the action of $G_\IntegerRing$, $(A_1, B_1)$ can be transformed into:
\begin{eqnarray*}
(A_1, B_1) = (M, 1) \cdot
\left(
\begin{pmatrix}
	a_{11} & a_{12} & 0 \\
	a_{12} & a_{22} & 0 \\
	0 & 0 & 0
\end{pmatrix},
\begin{pmatrix}
	0 & 0 & 0 \\
	0 & 0 & 0 \\
	0 & 0 & 1
\end{pmatrix}
\right),\
M
:= 
\begin{pmatrix}
  1 & 0 & m_1 \\ 
  0 & 1 & m_2 \\ 
  0 & 0 & m_3 
\end{pmatrix},
\end{eqnarray*}
where $m_1, m_2$ and $0 \neq m_3 \in \IntegerRing$ may be assumed to have the greatest common divisor~1.

By comparing the representations $(0, *)$ of $(A_1, B_1)$ and $(A_2, B_2)$, it is seen that $(A_2,B_2)$ can be simultaneously transformed into:
\begin{eqnarray*}
(A_2, B_2) = (\tilde{M}, 1) \cdot
\left(
\begin{pmatrix}
	\tilde{a}_{11} & \tilde{a}_{12} & 0 \\
	\tilde{a}_{12} & \tilde{a}_{22} & 0 \\
	0 & 0 & 0
\end{pmatrix},
\begin{pmatrix}
	0 & 0 & 0 \\
	0 & 0 & 0 \\
	0 & 0 & 1
\end{pmatrix}
\right),\
\tilde{M}
:= 
\begin{pmatrix}
  1 & 0 & \tilde{m}_1 \\ 
  0 & 1 & \tilde{m}_2 \\ 
  0 & 0 & m_3 
\end{pmatrix}.
\end{eqnarray*}
The greatest common divisor of $\tilde{m}_1, \tilde{m}_2$, $0 \neq m_3 \in \IntegerRing$ is $1$, owing to $q_{\IntegerRing}(B_1) = q_{\IntegerRing}(B_2)$.
Furthermore, 
either of the following may be assumed owing to $q_{\IntegerRing}(A_1) = q_{\IntegerRing}(A_2)$:
\begin{enumerate}[(I)]
	\item
$\begin{pmatrix}
	a_{11} & a_{12} \\
	a_{12} & a_{22}
\end{pmatrix}
=
\begin{pmatrix}
	1 & -1/2 \\
 -1/2 & 1
\end{pmatrix},\
\begin{pmatrix}
	\tilde{a}_{11} & \tilde{a}_{12} \\
	\tilde{a}_{12} & \tilde{a}_{22}
\end{pmatrix}
=
\begin{pmatrix}
	1 & 0 \\
	0 & 3
\end{pmatrix}$.
	\item
$\tilde{A} := \begin{pmatrix}
	a_{11} & a_{12} \\
	a_{12} & a_{22}
\end{pmatrix}
=
\begin{pmatrix}
	\tilde{a}_{11} & \tilde{a}_{12} \\
	\tilde{a}_{12} & \tilde{a}_{22}
\end{pmatrix},\
0 < a_{11} \leq a_{22},
0 \leq - 2 a_{12} \leq a_{11}$, \IE $\tilde{A}$ is reduced.
\end{enumerate}

For any $m \neq 0$ and $n \in \IntegerRing$, 
the class of $n$ in $\IntegerRing/ m \IntegerRing$ is denoted by $\Mod{n}{m}$.
In case (I), 
the following is proved by 
considering the representations $(A_i(h), B_i(h)) = (1, *)$, $(3, *)$:
\begin{eqnarray*}
	\left\{ \pm \Mod{m_1}{m_3}, \pm \Mod{m_2}{m_3}, \pm \Mod{m_1+m_2}{m_3} \right\}
	&=&
	\left\{ \pm \Mod{\tilde{m}_1}{m_3} \right\}, \\
	\left\{ \pm \Mod{m_1 - m_2}{m_3}, \pm \Mod{m_1 + 2 m_2}{m_3}, \pm \Mod{2 m_1 + m_2}{m_3} \right\}
	&=&
	\left\{ \pm \Mod{\tilde{m}_2}{m_3} \right\}.
\end{eqnarray*}
The above can hold only when either of the following holds:
\begin{itemize}
\item $\Mod{m_1}{m_3} = \Mod{m_2}{m_3} = \Mod{\tilde{m_1}}{m_3} = \Mod{\tilde{m_2}}{m_3} = \Mod{0}{m_3}$ or 
\item $m_3 = 3$, $\Mod{m_1}{3} = \Mod{m_2}{3} = \pm \Mod{\tilde{m_1}}{3}$, $\Mod{\tilde{m_2}}{3} = \Mod{0}{3}$.
\end{itemize}
We note that each corresponds to the cases (i), (ii), respectively. Thus, the theorem is proved in case (I).

In case (II), 
$\Mod{m_1}{m_3} = \Mod{\tilde{m}_1}{m_3}$, $\Mod{m_2}{m_3} = \pm \Mod{\tilde{m}_2}{m_3}$ may be assumed,
by changing the basis of $\IntegerRing^3$ without losing the property (II).
In what follows, we assume that 
 $\Mod{m_1}{m_3} \neq -\Mod{m_1}{m_3}$ and $\Mod{m_2}{m_3} = -\Mod{\tilde{m}_2}{m_3} \neq -\Mod{m_2}{m_3}$,
because otherwise, $(A_1, B_1)$ and $(A_2, B_2)$ are equivalent by the action of $GL_3(\IntegerRing)$.
Furthermore, 
the multiplicity of the representation $a_{11} + a_{22} + 2a_{12}$ of
$\tilde{A}$ must be greater than $1$, otherwise we would have $\Mod{m_1 + m_2}{m_3} = \Mod{\tilde{m}_1 + \tilde{m}_2}{m_3}$, 
and $\Mod{m_2}{m_3} = \Mod{\tilde{m}_2}{m_3}$.
This can happen only if $a_{12} = 0$ or $a_{12} = -a_{11}/2$.
If $a_{12} = 0$, then
$(A_1, B_1)$ and $(A_2, B_2)$ are equivalent by the action of $GL_3(\IntegerRing)$.
If $a_{12} = -a_{11}/2$, 
then we see from the representations $(A_i(h), B_i(h)) = (a_{11} + a_{22} + 2a_{12}, *)$ that either of the following is required:
\begin{itemize}
	\item 
 	$\left\{ \pm \Mod{m_2}{m_3}, \pm \Mod{m_1 + m_2}{m_3} \right\}
	=
	\left\{ \pm \Mod{m_2}{m_3}, \pm \Mod{m_1 - m_2}{m_3} \right\}$.
	\item 
	$a_{11} = a_{22} = - 2 a_{12}$ and \\
	$\left\{ \pm \Mod{m_1}{m_3}, \pm \Mod{m_2}{m_3}, \pm \Mod{m_1 + m_2}{m_3} \right\}
	=
	\left\{ \pm \Mod{m_1}{m_3}, \pm \Mod{m_2}{m_3}, \pm \Mod{m_1 - m_2}{m_3} \right\}$.
\end{itemize}
If $\Mod{m_1 + m_2}{m_3} = \pm \Mod{m_1 - m_2}{m_3}$, then 
$\Mod{m_1}{m_3} = -\Mod{m_1}{m_3}$ or $\Mod{m_2}{m_3} = -\Mod{m_2}{m_3}$ must hold, which contradicts with the above assumption.
However, in the former of the above cases, $\Mod{m_1 + m_2}{m_3} = \pm \Mod{m_1 - m_2}{m_3}$ always holds.
In the latter, $\Mod{m_1 + m_2}{m_3} \neq \pm \Mod{m_1 - m_2}{m_3}$ implies that
$\Mod{m_1 + m_2}{m_3} = \pm \Mod{m_i}{m_3}$ and $\Mod{m_1 - m_2}{m_3} = \pm \Mod{m_j}{m_3}$ hold for either $(i, j) = (1, 2)$ or $(i,j)=(2, 1)$.
From $\Mod{m_1}{m_3} \neq -\Mod{m_1}{m_3}$ and $\Mod{m_2}{m_3} \neq -\Mod{m_2}{m_3}$,
the following is obtained:
$$(\Mod{m_1 + m_2}{m_3}, \Mod{m_1 - m_2}{m_3}) = (\Mod{-m_1}{m_3}, \Mod{m_2}{m_3}) \text{ or } (\Mod{-m_2}{m_3}, \Mod{-m_1}{m_3}).$$

Comparing the representation $(a_{11} + a_{22} - 2 a_{12}, *)$, we have
\begin{eqnarray*}
	& & \hspace{-20mm} 
	\{ \pm \Mod{m_1 - m_2}{m_3}, \pm \Mod{m_1 + 2 m_2}{m_3}, \pm \Mod{2 m_1 + m_2}{m_3} \} \\
	&=&
	\{ \pm \Mod{m_1 + m_2}{m_3}, \pm \Mod{m_1 - 2 m_2}{m_3}, \pm \Mod{2 m_1 - m_2}{m_3} \}.
\end{eqnarray*}

If $(\Mod{m_1 + m_2}{m_3}, \Mod{m_1 - m_2}{m_3}) = (\Mod{-m_1}{m_3}, \Mod{m_2}{m_3})$, 
the following is obtained from 
$\Mod{m_1}{m_3} = \Mod{2 m_2}{m_3}$ and $\Mod{m_2}{m_3} = -\Mod{2 m_1}{m_3}$:
\begin{eqnarray}\label{eq: {m2,0} equiv {m1} }
	\{ \pm \Mod{m_2}{m_3}, \Mod{0}{m_3} \}
	&=&
	\{ \pm \Mod{m_1}{m_3}, \Mod{0}{m_3} \}.
\end{eqnarray}

Even if $(\Mod{m_1 + m_2}{m_3}, \Mod{m_1 - m_2}{m_3}) = (\Mod{-m_2}{m_3}, \Mod{-m_1}{m_3})$, 
Eq.(\ref{eq: {m2,0} equiv {m1} }) is obtained from 
$\Mod{m_1}{m_3} = -\Mod{2 m_2}{m_3}$ and
$\Mod{m_2}{m_3} = \Mod{2 m_1}{m_3}$.

Eq.(\ref{eq: {m2,0} equiv {m1} }) implies $\Mod{m_1}{m_3} = \pm \Mod{m_2}{m_3}$, hence this case is impossible.
\end{proof}

Proposition \ref{thm:proposition 1} is proved in the remaining part.

\begin{proof}[Proof of Proposition \ref{thm:proposition 1}]
Since the ``only if'' part is clear, we shall prove the ``if'' part;
for any positive-definite $f_1, f_2 \in {\rm Sym}^2 (\RealField^3)^*$ with $q_\IntegerRing(f_1) = q_\IntegerRing(f_2)$,
take $\lambda_{1}, \ldots, \lambda_{s} \in \RealField$ linearly independent over $\RationalField$ and 
positive-definite $A_{1j}$, $A_{2j} \in {\rm Sym}^2 (\RationalField^3)^*$ ($1 \leq j \leq s$) satisfying $f_1 = \sum_{j=1}^s \lambda_j A_{1j}$, $f_2 = \sum_{j=1}^s \lambda_j A_{2j}$ 
as in Lemma \ref{lem:decomposition of positive definite symmety matrices}.
Furthermore, 
In this case, $q_\IntegerRing(A_{11}, \ldots, A_{1s}) = q_\IntegerRing(A_{21}, \ldots, A_{2s})$ holds.
Therefore, the proposition is obtained by proving the following (**):

\begin{description}
	\item[(**)]
If both of $(A_{i1}, A_{i2}, \ldots, A_{is}) \in {\rm Sym}^2 (\RationalField^3)^* \otimes_{\RationalField} \RationalField^s$ ($i = 1, 2$) satisfy all of
\begin{enumerate}[(a)]
\item $A_{i1}, \ldots, A_{is}$ span a linear space of dimension more than 1 over $\RationalField$,
\item $\sum_{j = 1}^s c_j A_{ij}$ is positive definite for some $c_1, \ldots, c_s \in \RationalField$,
\item $q_\IntegerRing(A_{11}, \ldots, A_{1s}) = q_\IntegerRing(A_{21}, \ldots, A_{2s})$,
\end{enumerate}
there exists $w \in GL_3(\IntegerRing)$ such that $(A_{11}, \ldots, A_{1s}) = (w, 1) \cdot (A_{21}, \ldots, A_{2s})$,
or there exist $w_1, w_2 \in GL_3(\IntegerRing)$ and $v \in GL_s(\RationalField)$ such that $\{ (A_{11}, \ldots, A_{1s}), (A_{21}, \ldots, A_{2s}) \} $ equals either of the following:
\begin{enumerate}[(i)]
\item 
$\left\{  
	(w_1, v) \cdot ( 0, \cdots, 0, x_1^2 - x_1 x_2 + x_2^2, x_3^2 ),
	(w_2, v) \cdot ( 0, \cdots, 0, x_1^2 + 3 x_2^2, x_3^2 )
\right\}$,

\item $\left\{  
	\begin{matrix} (w_1, v) \cdot ( 0, \cdots, 0, x_1^2 - x_1 x_2 + x_2^2,  (x_1 + x_2 + 3 x_3)^2 ), \\
	(w_2, v) \cdot ( 0, \cdots, 0, x_1^2 + 3 x_2^2, (x_1 + 3 x_3)^2 ) \end{matrix} 
\right\}$,
\end{enumerate}
where $0$ is the ternary quadratic form that maps all the ${\mathbf x} \in \IntegerRing^3$ to 0, and
$GL_3(\IntegerRing) \times GL_s(\RationalField) \ni (w, v)$
acts on ${\rm Sym}^2 (\RationalField^3)^* \otimes_{\RationalField} \RationalField^s$
by 
\begin{eqnarray*}
\left(w, (v_{ij})
\right) \cdot (A_1 \ldots A_s) = 
\left( \sum_{j=1}^s v_{1j} A_j({\mathbf x} w), \sum_{j=1}^s v_{2j} A_j({\mathbf x} w), \ldots, \sum_{j=1}^s v_{sj} A_j({\mathbf x} w) \right).
\end{eqnarray*}
\end{description}

The case of $s = 2$ is equivalent to the assumption (*).
In order to prove (**) by induction, we assume (**) is true for $s = 2, \ldots, n$ 
for some $n \geq 2$.
If $A_{11}, \ldots, A_{1 n+1}$ span a linear space of dimension $m < n+1$, by using the action of $\{ 1 \} \times GL_{n + 1}(\RationalField)$,
$(A_{11}, \ldots, A_{1 n+1})$ is mapped to $(\tilde{A}_1, \ldots, \tilde{A}_{m}, 0 \ldots, 0)$ for some $\tilde{A}_j \in {\rm Sym}^2 (\RationalField^3)^*$.
From $q_\IntegerRing(A_{11}, \ldots, A_{1s}) = q_\IntegerRing(A_{21}, \ldots, A_{2s})$,
$(A_{21}, \ldots, A_{2s})$ is simultaneously mapped to $(\tilde{B}_1, \ldots, \tilde{B}_{m}, 0 \ldots, 0)$ for some $\tilde{B}_j \in {\rm Sym}^2 (\RationalField^3)^*$.
The proposition is proved by induction in this case.
Thus we assume the dimension of the space spanned by $A_{11}, \ldots, A_{1 n+1}$ is exactly $n+1$.
By using the action of $GL_3(\IntegerRing) \times GL_{n+1}(\RationalField)$,
either of the following may be assumed:
\begin{enumerate}[(I)]
	\item $A_{1j} = A_{2j}$ ($1 \leq j \leq n$) and $\sum_{j=1}^n c_j A_{1j} \succ 0$ for some $c_j \in \RealField$,
	\item $n = 2$ and $(A_{11}, A_{12}) = (x_1^2 - x_1 x_2 + x_2^2, x_3^2)$, $(A_{21}, A_{22}) = (x_1^2 + 3 x_2^2, x_3^2)$,
	\item $n = 2$ and $(A_{11}, A_{12}) = (x_1^2 - x_1 x_2 + x_2^2, (x_1 + x_2 + 3 x_3)^2)$, $(A_{21}, A_{22}) = (x_1^2 + 3 x_2^2, (x_1 + 3 x_3)^2)$.
\end{enumerate}
In the case (I), 
define $H := \{ g \in GL_3(\IntegerRing) : A_{ij} = g \cdot A_{ij} \text{ for all } 1 \leq j \leq n \}$. 
For any $c \in \RationalField$, 
owing to 
$q_\IntegerRing(A_{11}, \ldots, A_{1 n-1}, A_{1 n} + c A_{1 n+1}) = q_\IntegerRing(A_{21}, \ldots, A_{2 n-1}, A_{2 n} + c A_{2 n+1})$
and the assumption of induction, 
there exists 
$h \in H$
such that $(h, 1) \cdot (A_{11}, \ldots, A_{1 n-1}, A_{1 n} + c A_{1 n+1}) = (A_{21}, \ldots, A_{2 n-1}, A_{2 n} + c A_{2 n+1})$.
Since $H$ is a finite group, some $h \in H$ satisfies $h \cdot (A_{1 n} + c_i A_{1 n+1})= A_{2 n} + c_i A_{2 n+1}$ for some distinct $c_1 \neq c_2$. 
Hence $h \cdot A_{1j} = A_{2j}$ holds for all $1 \leq j \leq n + 1$.

In the cases (II) and (III), owing to $A_{11} \not\sim A_{21}$,
${\rm Disc}(A_{11}, A_{12} + d A_{13}) = {\rm Disc}(A_{21}, A_{22} + d A_{23}) = 0$ 
is required for any $d \in \RationalField$.
Therefore $A_{i1}, A_{i2}, A_{i3}$ are linearly dependent over $\RationalField$ for each $i = 1, 2$.
Hence, (**) is true in any case.
\end{proof}

\paragraph{Acknowledgments}
The author would like to extend her gratitude to Professor T. Kamiyama of KEK. 
He gave her an opportunity to apply the theory of quadratic forms to a real scientific problem.
She also appreciates 
Professor A. Earnest of Southern Illinois University for sending her his lecture slides containing some references and a copy of Kaplansky's letter to Schiemann,
Professor T. Oda of the university of Tokyo, Dr. S. Harashita of Yokohama National University and Dr. K. Gunji of 
Chiba Institute of Technology for discussion during a regular seminar.
For various checks of our program and computation of genera and spinor genera, we used the algebra system Magma \cite{MR1484478}.

\appendix

\section{The algorithm to obtain all the positive-definite quadratic forms with a given set of representations}
\label{Algorithm}

In this section, the method to obtain all the positive-definite quadratic forms with the identical set of representations over $\IntegerRing$
is explained. 
For such a quadratic form $f$ and a given $M > 0$, 
all the elements $q_1, \ldots, q_t$ of $q_\IntegerRing(f)$ less than $M$ can be computed \cite{Pohst81}.
Therefore, the problem is reduced to enumeration of all the quadratic forms
with the property $q_\IntegerRing(f) \cap [0, M] = \{ q_1, \ldots, q_t \}$.
The method  described in Table~\ref{Recursive procedure} can be applied to any $N$-ary positive-definite quadratic forms as long as $N \leq 4$,
which equals the theoretical optimum, because infinitely many solutions may exist if $N > 4$.

In what follows, a set of vectors $v_1, \cdots, v_n$ of $\IntegerRing^N$ 
is said to be \textit{primitive} if it is a subset of some basis of $\IntegerRing^N$.
An $N$-ary quadratic form $S$ is \textit{Minkowski-reduced}
if the following holds for any $1 \leq n \leq N$:
\begin{eqnarray}\label{eq:definition of Minkowski-reduction}
	S ({\mathbf e}_n)
	=
	\min \left\{
		S(v) :
		\begin{matrix}
		v \in \IntegerRing^N \text{ such that } \langle {\mathbf e}_1, \ldots, {\mathbf e}_{n-1}, v \rangle \\
		\text{ is a primitive set of } \IntegerRing^N
		\end{matrix}
	\right\},
\end{eqnarray}
where ${\mathbf e}_n$ is a vector with $1$ in its $n$-th component and $0$ in the remaining components.
It is known that if $N \leq 4$, then $S = (s_{ij})$ is Minkowski-reduced
if and only if the following inequalities hold (\CF Lemma 1.2 of chap. 12, Cassels (1978)\nocite{Cassels78}):
\begin{eqnarray}\label{eq:condition3}
	\begin{cases}
		0 < s_{11} \leq \cdots \leq s_{NN},\\
		s_{jj} \leq S(v) \text{ for any } 1 \leq j \leq N \text{ and vectors } v \text{ with the entries } \\
		v_i = -1, 0, 1\ (1 \leq i < j),  v_j = 1, v_k = 0\ (j < k \leq N).
	\end{cases}
\end{eqnarray}

The following is frequently added to the definition of Minkowski reduction:
\begin{eqnarray}\label{eq:condition4}
s_{i i+1}  \leq 0\ (1 \leq i < N).
\end{eqnarray}
In what follows,
Eq.(\ref{eq:condition3}), (\ref{eq:condition4}) are adopted as the inequalities of the Minkowski reduction for $N \leq 4$.

Table~\ref{Recursive procedure} presents a recursive procedure for generating all candidates of $N$-ary quadratic forms
from a sorted set $\Lambda := \langle q_1, \cdots, q_t \rangle \subset \RealField_{> 0}$.
If the recursive procedure begins with the arguments $m=n=1$, $q_{min} = q_{max} = q_1$,
all positive-definite quadratic forms satisfying the following
in addition to Eq.(\ref{eq:condition3}), (\ref{eq:condition4}) are enumerated in the output array $Ans$:
\begin{eqnarray}
	\begin{cases}
				\text{any } q \in \Lambda \text{ with } q \leq s_{NN} \text{ belongs to } q_\IntegerRing(S), \\
				s_{nn}, s_{mm} + s_{nn} + 2 s_{mn} \in \Lambda \text{ for any } 1 \leq m, n \leq N.
	\end{cases}
\end{eqnarray}

After the execution of the algorithm, it is possible to reduce the number of candidate solutions in $Ans$ 
by
\begin{enumerate}[(a)]
\item
checking if
$\Lambda = q_\IntegerRing(S) \cap [0, q_{t}]$ holds,
\item removing either $S$ or $S_2$ from $Ans$,
if they are equivalent over $\IntegerRing$.
\end{enumerate}

\begin{table}
	\caption{A recursive procedure to obtain all the positive-definite quadratic forms with a given set $\Lambda$ of  representations}
	\label{Recursive procedure}
	\begin{tabular}{ccl}
\hline
	\multicolumn{3}{c}{{\bf void func($\Lambda, N, S, m, n, q_{min}, q_{max}, Ans$)}} \\
	(Input) \\
	$\Lambda$ & : & a sorted sequence $\langle q_1, \ldots, q_t \rangle$ of positive numbers \\
	$1 \leq N \leq 4$ & : & number of variables, \\
	$S$ & : & a quadratic form $(s_{ij})_{1 \leq i, j \leq N}$ \\
	$m, n$ & : & integers $1 \leq m \leq n \leq N$ indicating that the algorithm is determining \\
	& & the $(m, n)$-entry of $S$ \\
	$q_{min}, q_{max}$ & : & numbers indicating \\
	& & 	$\begin{cases}
					q_{min} \leq s_{nn} \leq q_{max} & \text{if } m = n, \\
					q_{min} \leq s_{mm} + s_{nn} + 2 s_{mn} \leq q_{max} & \text{otherwise.}
			\end{cases}$ \\
	(Output) \\
	$Ans$ & : & an array of $N \times N$ Minkowski-reduced symmetric matrices $\tilde{S} := (\tilde{s}_{ij})$ \\
    & & that satisfy \\
	& &
			$\begin{cases}
				\tilde{s}_{NN} \leq q_t,\
				\Lambda \cap [0, \tilde{s}_{NN}] \subset q_\IntegerRing(\tilde{S}), \\
				\tilde{s}_{nn}, \tilde{s}_{mm} + \tilde{s}_{nn} + 2 \tilde{s}_{mn} \in \Lambda \text{ for any } 1 \leq m, n \leq N.
			\end{cases}$
	\end{tabular}
	\begin{tabular}{p{1mm}l}
\\
	(Start) \\
	1: & Take integers $I$ and $J$ such that $\Lambda \cap [q_{min}, q_{max}] = \langle q_I, \cdots, q_J \rangle$. \\
	2: & for $l=I$ to $J$ do \\
	3: & \hspace{5mm}	if $m=n$ then \\
	4: & \hspace{10mm}	          $s_{n n} := q_{l}$. \\
	5: & \hspace{5mm}	else \\
	6: & \hspace{10mm}            $s_{m n} := s_{n m} := \frac{1}{2}(q_{l} - s_{m m} - s_{n n})$. \\
	7: & \hspace{5mm}   end if \\
	8: & \hspace{5mm}	if $m=1$ then \\
	9: & \hspace{10mm}			if $n \geq N$ then \\
	10: & \hspace{20mm}					Insert $S$ in $Ans$. \\ 
	11: & \hspace{10mm}			else  \\
	12: & \hspace{15mm}			$T := (s_{ij})_{1 \leq i, j \leq n}$.  /* an $n$-by-$n$ submatrix of $S$ */. \\
	13: & \hspace{15mm} 					${t_2} := \max \left\{ 1 \leq i \leq t :
												\begin{matrix}
													q_{1}, \ldots, q_{i-1} \in q_\IntegerRing(T)
\end{matrix} \right\}$. \\
	14: & \hspace{15mm}    				func($\Lambda, N, S, n+1, n+1, s_{nn}, q_{t_2}, Ans$). \\
	15: & \hspace{10mm} end if \\
	16: & \hspace{5mm}	else  \\
	17: & \hspace{10mm}		Take $p_{min}$, $p_{max}$ such that $s_{m-1 n}$ and the entries $s_{m-1 m-1}$, $s_{nn}$ of $S$ determined in \\
	& \hspace{10mm} the previous steps fulfill Eq.(\ref{eq:condition3}), (\ref{eq:condition4}) iff $p_{min} \leq s_{m-1 m-1} + s_{nn}  + 2 s_{m-1 n} \leq p_{max}$. \\
	18: & \hspace{10mm}   	func($\Lambda, N, S, m-1, n, S, p_{min}, p_{max}, Ans$). \\
	19: & \hspace{5mm} end if \\
	20: & end for \\
\hline
	\end{tabular}
	\footnotetext[1]{
	Such $p_{min}$, $p_{max}$ are determined uniquely
	because Eq.(\ref{eq:condition3}), (\ref{eq:condition4})
	define a polyhedral convex cone.
	}
\end{table}

As a consequence, if $q_t$ is sufficiently large,
all the $N$-ary positive-definite quadratic form $S$ that satisfy $\Lambda = q_\IntegerRing(S) \cap [0, q_t]$
are contained in the output array.
If we set $\Lambda := q_\IntegerRing(S_0) \cap [0, q_t]$ for some $S_0$, 
all the $N$-ary quadratic forms $S$ with $q_\IntegerRing(S) = q_\IntegerRing(S_0)$ are in the output.
After the execution of the algorithm,
it is possible to determine
whether or not $q_t$ is large enough to obtain all such $S$,
just by checking if $t_2 < t$ in line 13 and $p_{max} \leq q_t$ in line 17 hold.
If both are true, then
all the quadratic forms $S$ satisfying $\Lambda = q_\IntegerRing(S) \cap [0, q_t]$ are contained in $Ans$.

The algorithm is completed in a finite number of steps,
even if
$q_\IntegerRing(S_0)$ of some $S_0$ is used instead of the finite set $\Lambda$
(in fact, this can be programmed
by adding elements of $q_\IntegerRing(S_0)$ in $\Lambda$ when they are necessary).
Even though $t = \infty$ in such cases,
$t_2$ in line 13 of Table~\ref{Recursive procedure} is always finite, 
as a consequence of the following proposition:

\begin{prop}\label{prop:no sublattice}
If $S$, $S_2$ are positive-definite quadratic forms over $\RealField$ of degree $N$ and $N_2$, respectively, 
with  $1 \leq N_2 < \min \{ 4, N \}$, then
 $q_\RationalField(S) \not\subset q_\RationalField(S_2)$, hence $q_\IntegerRing(S) \not\subset q_\IntegerRing(S_2)$.
\end{prop}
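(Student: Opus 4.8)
The plan is to peel off two reductions and then run a local (Hasse--Minkowski) argument on rational forms. First, it suffices to prove $q_\RationalField(S)\not\subset q_\RationalField(S_2)$: both value sets are stable under multiplication by $(\RationalField^\times)^2$, so a witness $m=S(v)\notin q_\RationalField(S_2)$ can be cleared of denominators to an integer $mt^2=S(tv)\in q_\IntegerRing(S)$ that still lies outside $q_\RationalField(S_2)\supset q_\IntegerRing(S_2)$, giving $q_\IntegerRing(S)\not\subset q_\IntegerRing(S_2)$. Second, I would reduce the real forms to rational ones exactly as in the proof of Proposition~\ref{thm:proposition 1}: choosing a $\RationalField$-basis $\lambda_1,\dots,\lambda_s$ of the $\RationalField$-span of the entries of $S$ and $S_2$ and writing $S=\sum_j\lambda_jA_j$, $S_2=\sum_j\lambda_jB_j$ with $A_j\in{\rm Sym}^2(\RationalField^N)^*$ and $B_j\in{\rm Sym}^2(\RationalField^{N_2})^*$, the $\RationalField$-linear independence of the $\lambda_j$ turns $q_\RationalField(S)\subset q_\RationalField(S_2)$ into the simultaneous inclusion $q_\RationalField(A_1,\dots,A_s)\subset q_\RationalField(B_1,\dots,B_s)$. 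Since $\sum_j\lambda_jA_j\succ0$ and positive-definiteness is an open condition, I can pick rational $c_j$ with $A:=\sum_jc_jA_j$ positive-definite, and the simultaneous inclusion then yields $q_\RationalField(A)\subset q_\RationalField(B)$ for the rational $N_2$-ary form $B:=\sum_jc_jB_j$; in fact every value of $A$ is matched by $B$ at a single common argument for all components.

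The core is the rational statement that a positive-definite rational $N$-ary form represents, over $\RationalField$, a square class that an $N_2$-variable form with $N_2<\min\{4,N\}$ cannot. By Hasse--Minkowski applied to $A\perp[-m]$, a positive $m$ lies in $q_\RationalField(A)$ iff $A$ represents $m$ over every completion $\RationalField_v$, and likewise for $B$. I would use the $p$-adic classification: a nondegenerate form in at least four variables is universal over $\RationalField_p$; an anisotropic ternary form omits exactly one square class; a diagonal unit-coefficient form in $\ge3$ variables is isotropic, hence universal, for odd $p$; and, by Hilbert reciprocity, a positive-definite form in $2$ or $3$ variables (anisotropic at $\infty$) is anisotropic over at least one finite prime. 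Choosing a prime $p$ at which the smaller form omits a class $\gamma_p$ that the larger one represents, I would then globalize: when $N\ge4$, $A$ is universal at every $\RationalField_v$, so any positive $m$ in $\gamma_p$ works; when $N=3$, $A$ is universal at all but finitely many primes, so by weak approximation on $\RationalField^\times/(\RationalField^\times)^2$ one selects a positive $m$ whose class at $p$ is $\gamma_p$ and whose classes at the finitely many ramified primes are ones represented by $A$, whence $m\in q_\RationalField(A)\setminus q_\RationalField(B)$; and for $N=2$, $N_2=1$ one simply notes that a binary form represents at least two square classes while $[c]$ represents exactly one.

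The step I expect to be the main obstacle is the borderline $N=N_2+1$ with $N_2\in\{2,3\}$. Here a naive comparison of the number of distinct values below $X$ is useless, since both value sets have positive linear density and the leading constants are not comparable; the separation genuinely comes from the $p$-adic universality gap above, so the delicate points are producing a prime $p$ that is simultaneously good for the larger form and anisotropic for the smaller one, and coordinating the class $\gamma_p$ with representability at the remaining ramified primes through weak approximation. A secondary subtlety, needed to descend from the real to the rational setting, is to guarantee that the combined form $B=\sum_jc_jB_j$ is not forced to be isotropic (hence universal over $\RationalField$) for every admissible choice of $c_j$; exploiting the full simultaneous inclusion, rather than a single combination, is what rules this out and keeps the missed value genuinely unrepresentable.
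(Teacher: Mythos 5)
Your proposal is correct and follows essentially the same route as the paper: both the integral and the real statements are reduced to a rational one via Lemma \ref{lem:decomposition of positive definite symmety matrices}, and the rational case is settled by a local obstruction at a finite prime where the smaller form is anisotropic (the paper packages this as Lemma \ref{lem:no sublattice}, reducing to $N=4$, $N_2=3$ and citing Cassels, whereas you spell out the cases $N_2=1,2,3$ via Hasse--Minkowski and weak approximation). The only point worth noting is that your flagged worry about $B=\sum_j c_j B_j$ being isotropic dissolves at once: since $\sum_j \lambda_j B_j = S_2 \succ 0$ and positive-definiteness is an open condition, $B$ is positive definite, hence anisotropic over $\RationalField$, so no appeal to the full simultaneous inclusion is needed (the paper sidesteps the issue by extracting a single positive-definite rational component $T_{2j}$ directly from the decomposition lemma).
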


\begin{proof}
It may be assumed that $S$ and $S_2$ have rational entries,
since due to Lemma~\ref{lem:decomposition of positive definite symmety matrices},
they are simultaneously represented as finite sums $S = \sum_{j=1}^2 \lambda_j T_j$, $S_2 = \sum_{j=1}^s \lambda_j T_{2j}$, 
where $\lambda_1, \ldots, \lambda_s \in \RealField$ are linearly independent over $\RationalField$
and every $T_j$, $T_{2j}$ is rational and positive-definite.
In the rational case, the proposition follows from Lemma~\ref{lem:no sublattice}.
\end{proof}

\begin{lem}\label{lem:decomposition of positive definite symmety matrices}
Let $S_i$ ($1 \leq i \leq m$) be $N_i$-ary positive-definite quadratic forms over $\RealField$.
There are $\lambda_{1}, \ldots, \lambda_{s} \in \RealField$ linearly independent over $\RationalField$
and $N_i$-ary positive-definite quadratic forms $T_{ij}$ over $\RationalField$ ($1 \leq i \leq m$, $1 \leq j \leq s$)
such that every $S_i$ is represented as a finite sum $S_i = \sum_{j=1}^s \lambda_j T_{ij}$.
\end{lem}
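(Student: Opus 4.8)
The plan is to view the real matrices $S_i$ as $\Omega$-valued symmetric matrices over a small $\RationalField$-vector space $\Omega \subset \RealField$, and to obtain the desired real coefficients as the dual basis of a rational basis chosen inside a well-chosen open cone. First I would let $\Omega \subset \RealField$ be the finite-dimensional $\RationalField$-vector space spanned by all the entries of all the $S_i$, say $\dim_\RationalField \Omega = d$. By construction each $S_i$ lies in $\mathrm{Sym}^2(\RationalField^{N_i})^* \otimes_\RationalField \Omega$. For any $\RationalField$-basis $\lambda_1,\dots,\lambda_d$ of $\Omega$, with dual basis $\ell_1,\dots,\ell_d$ of $\Omega^* = \mathrm{Hom}_\RationalField(\Omega,\RationalField)$, the expansion $S_i = \sum_{j} \lambda_j T_{ij}$ holds with $T_{ij} := (\mathrm{id}\otimes \ell_j)(S_i) \in \mathrm{Sym}^2(\RationalField^{N_i})^*$, and the $\lambda_j \in \Omega \subset \RealField$ are automatically linearly independent over $\RationalField$. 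Thus the entire problem reduces to choosing the basis so that every coordinate matrix $T_{ij}$ is positive definite.

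Next I would package the positivity requirement as membership in a cone. For each $i$, let $\Phi_i \colon \Omega^*_\RealField \to \mathrm{Sym}^2(\RealField^{N_i})^*$ denote the $\RealField$-linear map $\ell \mapsto (\mathrm{id}\otimes \ell)(S_i)$, and set $\mathcal{K} := \{\, \ell \in \Omega^*_\RealField : \Phi_i(\ell) \succ 0 \text{ for all } i \,\}$. Since the positive-definite cone in each $\mathrm{Sym}^2(\RealField^{N_i})^*$ is open and convex and each $\Phi_i$ is linear, the set $\mathcal{K}$ is a finite intersection of open convex sets, hence itself open and convex. The crucial point is that $\mathcal{K}$ is nonempty: the inclusion $\Omega \hookrightarrow \RealField$ is an element $\mathrm{ev}\in\Omega^*_\RealField$ satisfying $\Phi_i(\mathrm{ev}) = S_i$, and $S_i \succ 0$ by hypothesis, so $\mathrm{ev}\in\mathcal{K}$.

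To conclude, I would exploit that $\mathcal{K}$ is open and nonempty in $\Omega^*_\RealField \cong \RealField^d$, hence full-dimensional and not contained in any proper subspace. Therefore the rational points $\Omega^* \cap \mathcal{K}$ are dense in $\mathcal{K}$, and one may pick $\ell_1,\dots,\ell_d \in \Omega^* \cap \mathcal{K}$ forming a $\RationalField$-basis of $\Omega^*$, choosing them one at a time so that each new $\ell_j$ lies in $\mathcal{K}$ but outside the span of the previously chosen ones (possible exactly because $\mathcal{K}$ meets the complement of every proper subspace). Taking $\lambda_1,\dots,\lambda_d \in \Omega$ to be the dual basis then yields $S_i = \sum_{j} \lambda_j T_{ij}$ with each $T_{ij} = \Phi_i(\ell_j)$ rational (since $\ell_j$ is $\RationalField$-valued) and positive definite (since $\ell_j\in\mathcal{K}$), which is the assertion of the lemma with $s=d$.

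The main obstacle to anticipate is precisely the tension between the two requirements, namely positive-definiteness of the rational matrices $T_{ij}$ and $\RationalField$-linear independence of the real scalars $\lambda_j$: a naive choice such as the standard basis of the entry span can force some coordinate matrix $T_{ij}$ to vanish or to be indefinite, and one cannot repair this by redistributing mass once the basis is fixed. The dual-cone formulation above resolves this uniformly, the key being that the evaluation functional $\mathrm{ev}$ certifies that the open cone $\mathcal{K}$ is nonempty, after which openness and rational density do the rest; note also that this argument treats the matrices $S_i$ of different sizes $N_i$ simultaneously and requires no restriction on the $N_i$.
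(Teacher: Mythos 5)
Your proof is correct and rests on the same mechanism as the paper's: the set of rational coefficient functionals making all coordinate forms positive definite is a nonempty open convex cone (nonempty because the evaluation functional, i.e.\ the original $\lambda$-tuple, lies in it), so by density of rational points one can pick a rational basis inside it and re-expand. The paper phrases this as choosing an invertible rational matrix $U$ entrywise $\epsilon$-close to the row vector $(\lambda_1,\ldots,\lambda_s)$ and replacing $T_{ij}$ by $\sum_{j_2} u_{jj_2}T_{ij_2}$, which is your dual-cone argument in coordinates; no gap.
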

\begin{proof}
If the condition that $T_{ij}$ is positive-definite is removed,
it is clear that such $\lambda_j$ and $T_{ij}$ exist.
Since $\sum_{j=1}^s \lambda_j T_{ij}$ is positive-definite for all of $i = 1, \ldots, m$,
there exists $\epsilon > 0$ such that 
$\sum_{j=1}^s c_j T_{ij}$ satisfies the same property
for any $(c_1, \ldots, c_s) \in \RationalField^s$ with $\abs{ c_j - \lambda_j } < \epsilon$ ($j = 1, \ldots, s$).
Hence, if we choose $U = (u_{j_1 j_2}) \in GL_s(\RationalField)$ so that $\abs{ u_{j_1 j_2} - \lambda_{j_2} } < \epsilon$ holds 
for all $j_1, j_2 = 1, \ldots, s$,
the new $(\lambda_1, \ldots, \lambda_s)$ and $T_{ij}$ replaced by 
$(\lambda_1, \ldots, \lambda_s) U^{-1}$ and $\sum_{j_2=1}^s u_{j j_2} T_{i j_2}$, 
satisfy all the required properties.

\end{proof}

For any $N$-ary quadratic form $S$ over a field $k$,
$S$ is \textit{singular}, if the determinant of the corresponding symmetric matrix is 0.
$S$ is \textit{isotropic} over $k$, if $0 \in q_k(S)$,
Otherwise $S$ is \textit{anisotropic} over $k$.

\begin{lem}\label{lem:no sublattice}
For any positive integers $N$, $N_2$ with $1 \leq N_2 < \min \{ 4, N \}$,
we assume that an $N$-ary rational quadratic form $S$ is non-singular
and an $N_2$-ary rational quadratic form $S_2$ is anisotropic over $\RationalField$.
Then, $q_\RationalField(S) \not\subset q_\RationalField(S_2)$.
\end{lem}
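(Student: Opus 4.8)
The plan is to detect the failure of containment by a single local obstruction: it suffices to exhibit a finite prime $p$ and a rational number $m \in q_\RationalField(S)$ whose class in $\RationalField_p^\times/(\RationalField_p^\times)^2$ is \emph{not} represented by $S_2$ over $\RationalField_p$. Indeed, since $q_\RationalField(S_2) \subseteq q_{\RationalField_p}(S_2)$, such an $m$ lies in $q_\RationalField(S)$ but not in $q_\RationalField(S_2)$, which is exactly the conclusion. So I would reduce the global statement to a purely local non-containment over $\RationalField_p$ together with a $p$-adic approximation argument, exploiting that $S$ is non-singular and $S_2$ is anisotropic.

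First I would locate a finite prime $p$ at which $S_2$ remains anisotropic. Because $S_2$ is anisotropic over $\RationalField$ and has $N_2 \le 3$ variables, it cannot be isotropic at every finite place: for $N_2 = 1$ this is automatic; for $N_2 = 2$ the non-square $-\det S_2$ is a non-square in $\RationalField_p$ for a positive density of primes; and for $N_2 = 3$ the Hilbert reciprocity (product) formula forces the set of places of anisotropy to be nonempty of even cardinality, hence to contain a finite prime. Fix such a $p$. Note that for a form such as $x_1^2+x_2^2+x_3^2$ the only finite choice is $p=2$, so the argument must be allowed to run over $\RationalField_2$ as well.

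Next I would show $q_{\RationalField_p}(S) \not\subseteq q_{\RationalField_p}(S_2)$. If $N \ge 4$ this is immediate: a non-singular form of dimension $\ge 4$ over $\RationalField_p$ is universal, i.e.\ represents every element of $\RationalField_p^\times$, whereas the anisotropic $S_2$ of dimension $\le 3$ must omit some class (enlarge $S_2$ to an anisotropic form of dimension $N_2+1 \le 4$; any value realizing that enlargement is omitted). When $N \le 3$ — so only the pairs $(N,N_2) \in \{(2,1),(3,1),(3,2)\}$ remain — both $S$ and $S_2$ are anisotropic over $\RationalField_p$, and I would compare the numbers of represented square classes directly from the local classification: the maximal dimension of an anisotropic form over $\RationalField_p$ is $4$, a binary anisotropic form represents at least two square classes, and a ternary anisotropic form omits only a single class. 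Hence in each of the three cases $S$ represents strictly more classes than $S_2$, so some class $c_0$ attained by $S$ is omitted by $S_2$.

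Finally I would pass from the local class $c_0$ to a genuine rational value. Choosing $v_p \in \RationalField_p^N$ with $S(v_p) = c_0 \ne 0$ and approximating $v_p$ by a rational vector $v$ that is $p$-adically close (possible since $\RationalField^N$ is dense in $\RationalField_p^N$ and the squares are open in $\RationalField_p^\times$), the value $m := S(v)$ is a nonzero rational lying in the class $c_0$; thus $m \in q_\RationalField(S)$ while $m \notin q_{\RationalField_p}(S_2) \supseteq q_\RationalField(S_2)$. I expect the main obstacle to be the part concerning the existence of a finite prime of anisotropy in the ternary case together with the small-dimensional local class count, since there one cannot invoke universality of $S$ and must use the fine structure of anisotropic forms over $\RationalField_p$ (in particular the eight square classes when $p=2$); the reduction and the approximation step are routine.
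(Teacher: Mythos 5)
Your argument is correct, and its core coincides with the paper's: both detect the failure of containment through a single finite prime $p$ at which $S_2$ is anisotropic, hence fails to represent some square class of $\RationalField_p^\times$, while $S$ represents strictly more there. The paper's proof is much terser: it reduces at once to $N=4$, $N_2=3$ (asserting the other cases "easily follow," implicitly by padding both forms with a common unary summand), uses universality of non-singular quaternary forms over $\RationalField_p$, and simply cites Cassels for the existence of the finite prime of anisotropy; the passage from $q_\RationalField(S)\subset q_\RationalField(S_2)$ to the local inclusion is asserted without the density/open-square-classes argument you make explicit. You deviate in two useful ways: you derive the finite anisotropic prime directly (trivially for $N_2=1$, by density of primes inert in $\RationalField(\sqrt{-\det S_2})$ for $N_2=2$, and by Hilbert reciprocity forcing an even, nonempty set of anisotropic places for $N_2=3$), and you dispose of the residual cases $N\le 3$ by counting represented square classes over $\RationalField_p$ rather than by the paper's reduction — a more self-contained route, since the paper leaves that reduction to the reader. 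One small repair is needed: in those residual cases you assert that $S$ is anisotropic over $\RationalField_p$, which does not follow from non-singularity; but if $S$ is isotropic at $p$ it is universal there (being non-degenerate of rank $\ge 2$), so it represents every square class and the comparison with $S_2$ holds a fortiori. With that case distinction added, the proof is complete.
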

\begin{proof}
We may assume that $N_2 + 1 = N = 4$, as the other cases easily follow from this. 
Since $S$ is not singular,
it satisfies $q_{\RationalField_p}(S) \supset \RationalField_p^\times$ for any $p \neq \infty$.
In addition,
there exists a finite prime $p$
such that
$q_{\RationalField_p}(S_2) \not\supset \RationalField_p^\times$
(\CF Corollary 2 of Theorem 4.1 in Chapter 6, Cassels (1978)).
If $q_{\RationalField}(S) \subset q_{\RationalField}(S_2)$,
then $q_{\RationalField_p}(S) \subset q_{\RationalField_p}(S_2)$ is required for any $p$.
This is a contradiction.
\end{proof}

In line 13 of Table \ref{Recursive procedure}, $s_{n+1 n+1} \leq q_{t_2}$ is assumed,
which is proved as follows;
with regard to the symmetric matrix $T$ defined in line 12,
$q_{t_2} \in \Lambda$ does not belong to $q_\IntegerRing(T)$.
Hence, if $S$ is an extension of $T$ searched for, there exists $v \in \IntegerRing^N$ such that
${\mathbf e}_1, \ldots, {\mathbf e}_n, v$
are linearly independent and $S(v) = q_{t_2}$.
Let us recall that the $n$-th \textit{successive minimum} $\lambda_n$ of $S$ is defined as follows
\begin{eqnarray*}\label{eq: definition of successive minimum}
	\lambda_n :=
	\min \left\{
		\max\{ S(v_i) : 1 \leq i \leq n \} : 
		v_1, \ldots, v_n \in \IntegerRing^N \text{ are linearly independent over $\RationalField$}
	\right\}.
\end{eqnarray*}
If $1 \leq n \leq N \leq 4$, 
the above $v_1, \ldots, v_n$ can be chosen from a primitive set of $\IntegerRing^N$
(\CF Wan der Waerden (1956)\nocite{Waerden56}).
Therefore, $s_{n+1 n+1} \leq q_{t_2}$ holds in line 13.

\bibliographystyle{plain}
\bibliography{document8}

\end{document}